\theoremstyle{plain} 
\newtheorem{thm}{Theorem}[section]
\newtheorem{prop}[thm]{Proposition}
\newtheorem{lem}[thm]{Lemma}
\newtheorem{cor}[thm]{Corollary}
\theoremstyle{definition}
\newtheorem{defn}[thm]{Definition}
\newtheorem{rem}[thm]{Remark}
\newtheorem{ex}[thm]{Example}
\newtheorem{quest}[thm]{Question}
\numberwithin{equation}{section}
\renewcommand{\theta}{\vartheta}
\renewcommand{\phi}{\varphi}
\renewcommand{\epsilon}{\varepsilon}
\renewcommand{\subset}{\subseteq}
\newcommand{\N}{\mathbb N}
\newcommand{\Z}{\mathbb Z}
\newcommand{\C}{\mathbb C}
\newcommand{\CC}{\mathcal C}
\newcommand{\DD}{\mathcal D}
\newcommand{\OOO}{\mathcal O}
\newcommand{\HHH}{\mathcal H}
\newcommand{\BBB}{\mathcal B}
\newcommand{\SSS}{\mathcal S}
\newcounter{PartitionDepth}
\newcounter{PartitionLength}
\newcommand{\partii}[3]{
 \begin{picture}(#3,#1)
 \setcounter{PartitionLength}{#3-#2}
 \setcounter{PartitionDepth}{-1-#1}
 \put(#2,\thePartitionDepth){\line(0,1){#1}}     
 \put(#3,\thePartitionDepth){\line(0,1){#1}}
 \put(#2,\thePartitionDepth){\line(1,0){\thePartitionLength}}
 \end{picture}}
\newcommand{\upparti}[2]{
 \begin{picture}(#2,#1)
 \setcounter{PartitionDepth}{#1}
 \put(#2,0){\line(0,1){#1}}
 \end{picture}}
\newcommand{\uppartii}[3]{
 \begin{picture}(#3,#1)
 \setcounter{PartitionLength}{#3-#2}
 \setcounter{PartitionDepth}{#1}
 \put(#2,0){\line(0,1){#1}}     
 \put(#3,0){\line(0,1){#1}}
 \put(#2,\thePartitionDepth){\line(1,0){\thePartitionLength}}
 \end{picture}}
\newsavebox{\boxpaarpart}
\newsavebox{\boxbaarpart}
\newsavebox{\boxdreipart}
\newsavebox{\boxvierpart}
\newsavebox{\boxvierpartrot}
\newsavebox{\boxvierpartrotdrei}
\newsavebox{\boxcrosspart}
\newsavebox{\boxhalflibpart}
\newsavebox{\boxpositioner} 
\newsavebox{\boxfatcross} 
\newsavebox{\boxprimarypart} 
\newcommand{\primarypart}{\usebox{\boxprimarypart}}
\newcommand{\upsubset}{\begin{rotate}{90}$\subset$\end{rotate}}
\newcommand{\twocol}{\circ\bullet}
\newsavebox{\boxidpartww}
\newsavebox{\boxidpartbw}
\newsavebox{\boxidpartwb}
\newsavebox{\boxidpartbb}
\newsavebox{\boxidpartsingletonbb}
\newsavebox{\boxidpartsingletonww}
\newsavebox{\boxpaarpartbb}
\newsavebox{\boxpaarpartww}
\newsavebox{\boxpaarpartbw}
\newsavebox{\boxpaarpartwb}
\newsavebox{\boxbaarpartbb}
\newsavebox{\boxbaarpartww}
\newsavebox{\boxbaarpartbw}
\newsavebox{\boxbaarpartwb}
\newsavebox{\boxcutpaarpartbb}
\newsavebox{\boxcutpaarpartww}
\newsavebox{\boxcutpaarpartbw}
\newsavebox{\boxcutpaarpartwb}
\newsavebox{\boxsingletonw}
\newsavebox{\boxsingletonb}
\newsavebox{\boxdownsingletonw}
\newsavebox{\boxdownsingletonb}
\newsavebox{\boxvierpartwbwb}
\newsavebox{\boxvierpartbwbw}
\newsavebox{\boxvierpartwwbb}
\newsavebox{\boxvierpartwbbw}
\newsavebox{\boxvierpartbwwb}
\newsavebox{\boxvierpartrotwbwb}
\newsavebox{\boxvierpartrotbwwb}
\newsavebox{\boxvierpartrotbwbw}
\newsavebox{\boxvierpartrotwwww}
\newsavebox{\boxvierpartrotbbbb}
\newsavebox{\boxdreipartwww}
\newsavebox{\boxsechspartwbwbwb}
\newsavebox{\boxcrosspartwbbw}
\newsavebox{\boxcrosspartbwwb}
\newsavebox{\boxcrosspartwwww}
\newsavebox{\boxcrosspartbbbb}
\newsavebox{\boxhalflibpartwwwwww}
\newsavebox{\boxpositionerd} 
\newsavebox{\boxpositionerrpluseins} 
\newsavebox{\boxpositionerdt} 
\newsavebox{\boxpositioners} 
\newsavebox{\boxpositionerdinv} 
\newsavebox{\boxpositionersinv} 
\newsavebox{\boxpositionerdpluszwei}
\newsavebox{\boxpositionersminuszwei}
\newsavebox{\boxpositionerrnull}
\newsavebox{\boxpositionerwbwb}
\newsavebox{\boxpositionerwwbb}
\newsavebox{\boxpositionerrevwbwb}
\newsavebox{\boxpositionersalphaw} 
\newsavebox{\boxpositionersalphab} 
\newsavebox{\boxAspace}
\newsavebox{\boxBspace}
\newcommand{\idpartww}{\usebox{\boxidpartww}}
\newcommand{\idpartwb}{\usebox{\boxidpartwb}}
\newcommand{\idpartbw}{\usebox{\boxidpartbw}}
\newcommand{\idpartbb}{\usebox{\boxidpartbb}}
\newcommand{\paarpartww}{\usebox{\boxpaarpartww}}
\newcommand{\paarpartbw}{\usebox{\boxpaarpartbw}}
\newcommand{\paarpartwb}{\usebox{\boxpaarpartwb}}
\newcommand{\paarpartbb}{\usebox{\boxpaarpartbb}}
\newcommand{\baarpartbw}{\usebox{\boxbaarpartbw}}
\newcommand{\baarpartwb}{\usebox{\boxbaarpartwb}}
\newcommand{\singletonw}{\usebox{\boxsingletonw}}
\newcommand{\singletonb}{\usebox{\boxsingletonb}}
\newcommand{\downsingletonw}{\usebox{\boxdownsingletonw}}
\newcommand{\vierpartwbwb}{\usebox{\boxvierpartwbwb}}
\newcommand{\vierpartwwbb}{\usebox{\boxvierpartwwbb}}
\newcommand{\vierpartrotwbwb}{\usebox{\boxvierpartrotwbwb}}
\newcommand{\vierpartrotbwbw}{\usebox{\boxvierpartrotbwbw}}
\newcommand{\vierpartrotwwww}{\usebox{\boxvierpartrotwwww}}
\newcommand{\crosspartwbbw}{\usebox{\boxcrosspartwbbw}}
\newcommand{\crosspartbwwb}{\usebox{\boxcrosspartbwwb}}
\newcommand{\crosspartwwww}{\usebox{\boxcrosspartwwww}}
\newcommand{\crosspartbbbb}{\usebox{\boxcrosspartbbbb}}
\newcommand{\halflibpartwwwwww}{\usebox{\boxhalflibpartwwwwww}}
\newcommand{\positionerd}{\usebox{\boxpositionerd}}
\newcommand{\positionerrpluseins}{\usebox{\boxpositionerrpluseins}}
\newcommand{\positionerwbwb}{\usebox{\boxpositionerwbwb}}
\newcommand{\positionerwwbb}{\usebox{\boxpositionerwwbb}}
\DeclareMathOperator{\rot}{rot}
\DeclareMathOperator{\nest}{nest}
\DeclareMathOperator{\Mor}{Mor}
\DeclareMathOperator{\id}{id}
\DeclareMathOperator{\spanlin}{span}
\newcommand{\subsetup}{\begin{rotate}{90}$\subset$\end{rotate}}
\newcommand{\categ}[3]{{#1}_{#2}(#3)}
\newcommand{\categg}[2]{{#1}_{#2}}
\newcommand{\glob}{\textnormal{glob}}
\newcommand{\loc}{\textnormal{loc}}
\newcommand{\grp}{\textnormal{grp}}
\newcommand{\freeglued}{\tilde *}
\newcommand{\tensorglued}{\tilde \times}
\begin{document}
\title[Unitary Easy Quantum Groups]{Unitary Easy Quantum Groups:\\
the free case and the group case}
\author{Pierre Tarrago and Moritz Weber}
\address{Universit\'e de Tours, Laboratoire de Math\'ematiques et Physique Th\'eorique, UFR Sciences et Techniques, Parc de Grandmont, 37200 Tours, France}
\email{Pierre.Tarrago@lmpt.univ-tours.fr}
\address{Saarland University, Fachbereich Mathematik, Postfach 151150,
66041 Saarbr\"ucken, Germany}
\email{weber@math.uni-sb.de}
\date{\today}
\subjclass[2010]{20G42 (Primary); 05A18, 46L54 (Secondary)}
\keywords{easy quantum groups, noncrossing partitions, category of partitions, compact quantum group, Tannaka-Krein, tensor category, intertwiner space}

\begin{abstract}
Easy quantum groups have been studied intensively since the time they were introduced by Banica and Speicher in 2009. They arise as a subclass of ($C^*$-algebraic) compact matrix quantum groups in the sense of Woronowicz. Due to some Tannaka-Krein type result, they are completely determined by the combinatorics of categories of (set theoretical) partitions.
So far, only orthogonal easy quantum groups have been considered in order to understand quantum subgroups of the free orthogonal quantum group $O_n^+$.

 We now give a definition of unitary easy quantum groups using colored partitions to tackle the problem of finding quantum subgroups of $U_n^+$. In the free case (i.e. restricting to noncrossing partitions), the corresponding categories of partitions have recently been classified by the authors by purely combinatorial means. There are ten series showing up each indexed by one or two discrete parameters, plus two additional quantum groups. We now present the quantum group picture of it and investigate them in detail. We show how they can be constructed from other known examples using generalizations of Banica's free complexification. For doing so, we introduce new kinds of products between quantum groups.
 
 We also study the notion of easy groups.

\end{abstract}

\maketitle
\section*{Introduction}

In order to provide a new notion of symmetries adapted to the situation in operator algebras, Woronowicz introduced compact matrix quantum groups in 1987 in \cite{woronowicz1987compact}. The idea is roughly to take a compact Lie group $G\subset M_n(\C)$ and to pass to the algebra $C(G)$ of continuous functions over it. It turns out that this algebra fulfills all axioms of a $C^*$-algebra with the special feature that the multiplication is commutative. If we now also dualize main properties of the group law $\mu:G\times G\to G$ to properties of a map $\Delta:C(G)\to C(G\times G)\cong C(G)\otimes C(G)$ (the comultiplication), and consider noncommutative $C^*$-algebras  equiped with such a map $\Delta$, we are right in the heart of the definition of a compact matrix quantum group (see Section \ref{SectCMQG} for details). It generalizes the notion of a compact matrix group. The reader not familiar with quantum groups might also first jump to Section \ref{SectUnitaryGroups} and read it as a motivation.

By some Tannaka-Krein type result of Woronowicz  \cite{woronowicz1988tannaka} compact matrix quantum groups are governed by their intertwiner spaces which form a tensor category. Banica and Speicher observed in \cite{banica2009liberation} that one can define operations on set theoretical partitions which translate one-to-one to operations on intertwiners of certain quantum groups. This lead to the definition of easy quantum groups in 2009. They all sit in between the symmetric group $S_n$ and the free orthogonal quantum group $O_n^+$ constructed by Wang in \cite{wang1995free}. We now extend this approach to quantum groups $S_n\subset G\subset U_n^+$, where $U_n^+$ is Wang's free unitary quantum group \cite{wang1995free}. This approach will help to understand $U_n^+$ better by analyzing its quantum  subgroups.

Let us briefly sketch the main ideas and the main result of this article. Our basic objects are two-colored set theoretical partitions, i.e. we study decompositions of the set of $k+l$ points into disjoint subsets (the blocks), for $k,l\in\N_0$. In addition, these points may be colored either white or black. We represent such a partition pictorially by connecting $k$ upper points with $l$ lower points using strings according to the block pattern. Given two such partitions, we may form the tensor product (placing them side by side), the composition (placing one above the other), and the involution (reflecting a partition at the horizontal axis). If a set of partitions is closed under these operations and if it contains certain base partitions, it is called a category of partitions (see Section \ref{SectCateg}). Now, any category of partitions gives rise to a compact matrix quantum group $G$ with $S_n\subset G\subset U_n^+$ via a functor $p\mapsto T_p$ assigning linear maps (i.e. intertwiners) to partitions $p$. Following Banica and Speicher's definition in the orthogonal case (partitions without colors), we say that a quantum group $G$ is a unitary easy quantum group (see Section \ref{SectUnitaryEasy}), if its intertwiner spaces arise in this way from a category of partitions. Here, the color pattern of a partition rules to which tensor product combination of the fundamental representation $u$ and its contragredient $\bar u$ the intertwiner $T_p$ is applied. In the orthogonal case, we have $u=\bar u$ and no colors are needed on the level of partitions. In this sense, the coloring of the partitions is due to the fact that the generators $u_{ij}$ of our quantum groups are no longer self-adjoint when passing from the orthogonal to the unitary situation.

This is a way of producing examples of quantum groups having a purely combinatorial structure underlying. It is natural to ask how rich this machinery is, and we therefore wish to classify all categories of partitions -- hence, all unitary easy quantum groups. Note that in the orthogonal case (no colors) this has recently been achieved \cite{raum2013full}. In the unitary case (two colors) however, little is known. In a recent article  \cite{tarragowebercombina}, the authors classified all categories of (two-colored) noncrossing partitions. This is called the free case, since the corresponding quantum groups all sit in between Wang's free symmetric quantum group $S_n^+$ and $U_n^+$. While there are only seven free orthogonal easy quantum groups, the world of free unitary easy quantum groups is way richer. In the present article, we introduce the unitary easy quantum groups associated to the categories of partitions found in \cite{tarragowebercombina}. There are ten series of quantum groups each indexed by one or two parameters from the natural numbers, and two additional quantum groups. They all can be constructed from some basic quantum groups (including the free easy orthogonal ones) using either free or tensor $d$-complexifications. We define these constructions as a generalization of Banica's free complexification \cite{banica2008note} by replacing $\Z$ by $\Z/d\Z$. Note that from the classification in \cite{tarragowebercombina}, we know that the categories of noncrossing partitions fall into the class of globally colorized ones and the locally colorized ones. The first case is much easier (for instance the $u_{ij}$ are normal) and the quantum groups can be written as tensor $d$-complexifications. In the latter case, we have to involve also free $d$-complexifications.

As a main result of this article, we prove that all free easy quantum groups may be constructed from a few basic quantum groups using these complexifications, see Theorems \ref{ThmEasyGlob} and \ref{ThmEasyLoca}.
We compare our easy quantum groups with other known examples and we infer that for instance Banica and Vergnioux's quantum reflection groups $H_n^{s+}$ are easy. We also treat the case of unitary easy groups, see Section \ref{SectUnitaryGroups}. We end this article with many remarks on the use of easy quantum groups and on open problems, see Section \ref{SectConcluding}.

\section*{Acknowledgements}

We thank Teo Banica, Stephen Curran and Roland Speicher for sending us an unpublished draft \cite{speicherunpublished} of their work on the definition and classification of unitary easy quantum groups. Some parts of this article may be found in their draft, too. We thank Adam Skalski for discussions on different versions of quantum groups (full, reduced etc.).

Tarrago was supported by the Universit\'e Franco-Allemande. Both authors were partially funded by the ERC Advanced Grant on Non-Commutative Distributions in Free Probability, held by Roland Speicher.

\section{Categories of two-colored partitions}

Let us first briefly introduce the combinatorial objects underlying the unitary easy quantum groups. See \cite{tarragowebercombina} for more details concerning this section. The work in \cite{tarragowebercombina} is an extension of the well-known combinatorics of orthogonal easy quantum groups which were introduced by Banica and Speicher in \cite{banica2009liberation}.

\subsection{Partitions}

For $k,l\in\N_0:=\{0,1,2,\ldots\}$, we consider the finite set given by $k$ ``upper'' points and $l$ ``lower'' points. These points are colored either in white or in black and we say that these colors are \emph{inverse} to each other. A \emph{(two-colored) partition} is a  decomposition of this set into disjoint subsets, the \emph{blocks}. We usually draw these partitions by placing the $k$ ``upper'' points on a top row and the $l$ ``lower'' points on a bottom row, connecting them by strings according to the block pattern. Here are two examples of such partitions. The first one consists of three blocks whereas the second one has four blocks (one of these four blocks is a singleton).

\setlength{\unitlength}{0.5cm}
\begin{center}
\begin{picture}(12,4)
\put(-1,4.35){\partii{1}{1}{2}}
\put(-1,4.35){\partii{1}{3}{4}}
\put(-1,0.35){\uppartii{1}{1}{2}}
\put(-1,0.35){\upparti{2}{3}}
\put(0.05,0){$\bullet$}
\put(1.05,0){$\circ$}
\put(2.05,0){$\bullet$}
\put(0.05,3.3){$\circ$}
\put(1.05,3.3){$\circ$}
\put(2.05,3.3){$\bullet$}
\put(3.05,3.3){$\circ$}
\put(8.3,3.35){\line(1,-3){1}}
\put(7,4.35){\partii{1}{3}{4}}
\put(8.3,0.35){\line(1,3){1}}
\put(7,0.35){\upparti{1}{3}}
\put(8.05,0){$\bullet$}
\put(9.05,0){$\circ$}
\put(10.05,0){$\circ$}
\put(8.05,3.3){$\circ$}
\put(9.05,3.3){$\bullet$}
\put(10.05,3.3){$\bullet$}
\put(11.05,3.3){$\circ$}
\end{picture}
\end{center}

We denote by $P^{\twocol}(k,l)$ the set of all such partitions, and by $P^{\twocol}$  the union of all $P^{\twocol}(k,l)$, for $k,l\in\N_0$. By  $NC^{\twocol}(k,l)$  and $NC^{\twocol}$ respectively, we denote the set of \emph{noncrossing partitions}, i.e. of partitions whose lines can be drawn in such a way that they do not cross. In the above examples, the first partition is in $NC^{\twocol}$ whereas the second is not.

\subsection{Operations on partitions}\label{SectOperations}

Let us now turn to operations on the set $P^{\twocol}$.
\begin{itemize}
 \item  The \emph{tensor product} of two partitions $p\in P^{\twocol}(k,l)$ and $q\in P^{\twocol}(k',l')$ is the partition $p\otimes q\in P^{\twocol}(k+k',l+l')$ obtained by horizontal concatenation (writing $p$ and $q$ side by side).
 \item The \emph{composition} of two partitions $q\in P^{\twocol}(k,l)$ and $p\in P^{\twocol}(l,m)$ is the partition  $pq\in P^{\twocol}(k,m)$ obtained by vertical concatenation (writing $p$ below $q$ and removing the $l$ middle points).  Note that we can compose these two partitions  only if
the colorings match, i.e. the color of the $j$-th lower point of $q$ coincides with the color of the $j$-th upper point of $p$, for all $1\leq j\leq l$.
 \item The \emph{involution} of a partition $p\in P^{\twocol}(k,l)$ is given by the reflection  $p^*\in P^{\twocol}(l,k)$ at the horizontal axis.
 \item The \emph{verticolor reflection} of a partition $p\in P^{\twocol}(k,l)$ is given by the partition $\tilde p\in P^{\twocol}(k,l)$ obtained from reflecting $p$ at the vertical axis and inverting all colors of the points.
 \item We may also produce \emph{rotated versions} of a partition  by shifting the leftmost upper point to the  lower line or vice versa; likewise on the right hand side. A shifted point still belongs to the same block as before, but its color is inverted when being moved from the upper line to the lower line or vice versa.
\end{itemize}

See \cite{tarragowebercombina} for examples of these operations.

\subsection{Categories of partitions} \label{SectCateg}

A collection $\CC\subset P^{\twocol}$ of subsets $\CC(k,l)\subseteq P^{\twocol}(k,l)$ (for all $k,l\in\N_0$) is a \emph{category of partitions}, if it is closed under the tensor product, the composition and the involution, and if it contains the \emph{pair partitions} $\paarpartwb\in P^{\twocol}(0,2)$ and  $\paarpartbw\in P^{\twocol}(0,2)$ as well as the \emph{identity partitions} $\idpartww\in P^{\twocol}(1,1)$ and $\idpartbb\in P^{\twocol}(1,1)$. It can be shown that categories of partitions are closed under rotation and verticolor reflection  \cite[Lemma 1.1]{tarragowebercombina}. We write $\CC=\langle p_1,\ldots, p_n\rangle$ if $\CC$ is the smallest category containing the partitions $p_1,\ldots, p_n\in P^{\twocol}$. We then say that $\CC$ is \emph{generated} by $p_1,\ldots, p_n$. We usually omit to write down the generators $\paarpartwb$ and $\idpartww$, since they are always in a category, by definition.

\subsection{The non-colored case}\label{SectOnecolored}

When defining orthogonal easy quantum groups, Banica and Speicher \cite{banica2009liberation} introduced categories of partitions whose points are not colored. These categories are in one-to-one correspondence with categories of two-colored partitions containing the partition $\paarpartww$ (or equivalently, by verticolor reflection, $\paarpartbb$). Note that any such category contains $\idpartwb$ and $\idpartbw$ by rotation. Composition with these partitions yields that we can change the colors of the points arbitrarily -- in this sense, they appear to be non-colored. More precisely, let $\Psi:P^{\twocol}\to P$ be the map given by forgetting the colors of a two-colored partition. Here $P$ denotes the set of all non-colored partitions. We have  \cite[Prop. 1.4]{tarragowebercombina}: If $\CC\subset P$ is a category of non-colored partitions, then its preimage $\Psi^{-1}(\CC)\subset P^{\twocol}$ is a category of two-colored partitions containing  $\paarpartww$. Conversely, if  $\CC\subset P^{\twocol}$ is a category of two-colored partitions containing the unicolored pair partition $\paarpartww$, then $\Psi(\CC)\subset P$ is a category of non-colored partitions (we need $\paarpartww$ to show that it is closed under composition) and $\Psi^{-1}(\Psi(\CC))=\CC$.

\subsection{The cases of categories of non-crossing partitions}\label{SectCases}

The classification of categories of noncrossing partitions in \cite{tarragowebercombina} is based on a division of the categories into four cases. By $\singletonw\in P^{\twocol}(0,1)$ we denote the \emph{singleton partition} consisting of a single, white point; likewise $\singletonb$ if this point is black.

\begin{defn}\label{DefCases}
Let $\CC\subset P^{\twocol}$ be a category of partitions. We say that:
\begin{itemize}
\item $\CC$ is in \emph{case $\OOO$}, if $\singletonw\otimes\singletonb\notin\CC$ and $\vierpartwbwb\notin\CC$.
\item $\CC$ is in \emph{case $\BBB$}, if $\singletonw\otimes\singletonb\in\CC$ and $\vierpartwbwb\notin\CC$.
\item $\CC$ is in \emph{case $\HHH$}, if $\singletonw\otimes\singletonb\notin\CC$ and $\vierpartwbwb\in\CC$.
\item $\CC$ is in \emph{case $\SSS$}, if $\singletonw\otimes\singletonb\in\CC$ and $\vierpartwbwb\in\CC$.
\end{itemize}
\end{defn}

These four case behave quite differently, as can be seen in the next lemma.

\begin{lem}[{\cite[Lemmas 1.3 and 2.1]{tarragowebercombina}}]
Let $\CC\subset P^{\twocol}$ be a category of partitions.
\begin{itemize}
\item[(a)] If $\singletonw\otimes\singletonb\in\CC$, then $\CC$ is closed under permutation of colors of neighbouring singletons both either sitting on the upper row, or both on the lower one. Furthermore, we may disconnect any point from a block and turn it into a singleton.
\item[(b)] If $\singletonw\otimes\singletonb\notin\CC$, then all blocks of  partitions $p\in\CC$ have length at least two.
\item[(c)] If $\vierpartwbwb\in\CC$, we may connect neighbouring blocks if they meet at two points (on the same row) with inverse colors. If we even have $\vierpartwwbb\in\CC$, then $\CC$ is closed under permutation of colors of neighbouring points (on the same row) belonging to the same block; furthermore, we may connect neighbouring blocks independent of the colors of the points at which they meet.
\item[(d)] If $\vierpartwbwb\notin\CC$, then all blocks of partitions $p\in\CC$ have length at most two.
\end{itemize}
\end{lem}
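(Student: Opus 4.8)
All four parts run on one device. To show that $\CC$ is stable under a local modification of partitions (parts (a) and (c)), one first manufactures, from the hypothesised partition together with $\paarpartwb,\paarpartbw,\idpartww,\idpartbb$, a fixed auxiliary partition in $\CC$ --- a ``gadget'' --- using only rotations, the involution $p\mapsto p^*$, the verticolor reflection $p\mapsto\tilde p$ and a few compositions; then, given $p\in\CC$ and the place where the modification is to occur, one tensors the gadget with suitable copies of $\idpartww,\idpartbb$ and composes the result with $p$. Closure of $\CC$ under tensor product, composition, rotation and verticolor reflection does the rest. Parts (b) and (d) are the contrapositives of ``$\CC$ has a singleton block $\Rightarrow\singletonw\otimes\singletonb\in\CC$'' and ``$\CC$ has a block of size $\geq 3\Rightarrow\vierpartwbwb\in\CC$'', which we prove by extracting that partition from the offending one.

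\textbf{Parts (a) and (b).} Assume $\singletonw\otimes\singletonb\in\CC$. Rotating the black (resp.\ white) lower point up, which inverts its colour, turns $\singletonw\otimes\singletonb$ into $\idpartsingletonww$ (resp.\ $\idpartsingletonbb$); rotating both lower points up and applying the involution gives $\singletonb\otimes\singletonw$. To disconnect a lower point of colour $c$ from its block in $p$, compose $p$ with an identity chain built from $\idpartww,\idpartbb$ in which the factor at that position has been replaced by $\idpartsingletonww$ or $\idpartsingletonbb$; to swap the colours of two neighbouring lower singletons of colours $(c,c')$, replace those two factors instead by the four-point disconnected gadget whose two upper points are $c,c'$ and whose two lower points are $c',c$, for instance $(\singletonw\otimes\singletonb)^*\otimes(\singletonb\otimes\singletonw)$ when $(c,c')=(w,b)$. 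Singletons on the upper row are handled via $p^*$. For the second assertion: given $p\in\CC$ with a singleton block $\{x\}$, rotate every point of $p$ onto the lower row and then rotate $x$ up, obtaining $p'\in\CC$ whose sole upper point is $x$, of some colour $c$, and each of whose remaining blocks lies entirely on its lower row. In the composition $(p')^*p'$ each such block meets its mirror image along the deleted middle row and vanishes, leaving $(p')^*p'=\idpartsingletonww$ or $\idpartsingletonbb$; rotating its upper point down yields $\singletonb\otimes\singletonw\in\CC$ and hence, after further rotations, $\singletonw\otimes\singletonb\in\CC$. This is the contrapositive of (b).

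\textbf{Part (c).} Assume $\vierpartwbwb\in\CC$. Rotating its two rightmost lower points up produces a single-block partition on two upper and two lower points, with upper colours $w,b$ and lower colours $w,b$. Composing $p$ with an identity chain carrying this partition (or its $b,w$-version) at two neighbouring same-row points where blocks $B_1,B_2$ of $p$ meet with inverse colours merges $B_1$ and $B_2$ and changes nothing else; upper-row meetings go through $p^*$. If moreover $\vierpartwwbb\in\CC$, then rotating its lower points up supplies single-block partitions on two upper and two lower points realising the missing colour patterns --- upper $w,w$/lower $w,w$, upper $w,b$/lower $b,w$, and their involutions and verticolor reflections. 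Inserting such a partition into an identity chain at two neighbouring points of a single block of $p$ permutes their colours; using the same partitions as merging gadgets connects neighbouring blocks no matter what colours they meet with.

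\textbf{Part (d).} Suppose, for the contrapositive, that some $p\in\CC$ has a block $B$ with $|B|\geq 3$; we must obtain $\vierpartwbwb$. First rotate $p$ so that three points of $B$ occupy three consecutive positions on the lower row (the points lying between them in the cyclic order are thereby moved to the upper row), and then delete all the remaining points --- the rest of $B$ and the other blocks --- two at a time by composing with the pair partitions $\paarpartwb,\paarpartbw$, rotated and padded with $\idpartww,\idpartbb$. What survives is a single block on three (or, if $B$ was larger, more) points; composing suitably rotated copies of it produces a single block on four points, from which $\vierpartwbwb$ is obtained by a short further sequence of compositions with gadgets read off from its own rotations (as in part (c), but now produced rather than assumed). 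The real difficulty throughout (d), and the hardest point of the lemma, is the colour bookkeeping: two points that one wishes to cap together need not have inverse colours, so $\paarpartwb,\paarpartbw$ do not apply directly. One circumvents this by first rotating one of the two points to the opposite row, which inverts its colour, or by running the whole argument with $p\otimes\tilde p$, in which every point is accompanied by one of the opposite colour; and at the end the colours of the four-point block must be steered into the alternating pattern of $\vierpartwbwb$ using only partitions derived from it. This last colour manipulation is where I expect the argument to be most delicate.
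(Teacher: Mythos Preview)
The present paper does not actually prove this lemma --- it is quoted from \cite[Lemmas~1.3 and~2.1]{tarragowebercombina} with no argument given --- so there is no in-paper proof to compare against. Judged on its own merits:

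Your treatment of (a), (b) and (c) is correct and is the standard one: manufacture a two-row gadget from the hypothesised partition via rotation, involution and verticolor reflection, pad it with identity strands, and compose with $p$. The specific gadgets you name ($\idpartsingletonww$, $\idpartsingletonbb$, the rotated four-blocks) do exactly what you claim. One cosmetic slip in (b): you cannot literally ``rotate $x$ up'' unless $x$ already sits at an end of the row, since rotation only acts at the ends. The fix is to rotate all \emph{other} points away instead (which rotation from the two ends does accomplish), leaving $x$ alone on its row; your computation of $(p')^*p'$ then goes through verbatim.

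Part (d), however, has a genuine gap. Your opening move --- ``rotate $p$ so that three points of $B$ occupy three consecutive positions on the lower row (the points lying between them in the cyclic order are thereby moved to the upper row)'' --- is not something rotation can do. Rotation is a cyclic shift of the boundary word; it preserves the cyclic order of \emph{all} points, so three non-consecutive points of $B$ remain non-consecutive, and one cannot selectively send the in-between points to the upper row while keeping the $B$-points below. The honest route is to cap off the intervening points \emph{first}, so that the $B$-points become adjacent afterwards --- but that runs straight into the colour obstacle you yourself flag: two points one wishes to cap need not have inverse colours, so $\paarpartwb$ and $\paarpartbw$ do not apply directly. Your two suggested workarounds (rotating one endpoint to flip its colour, or working with $p\otimes\tilde p$) are plausible hints but are not developed into an argument, and the final step of steering an arbitrary one-block coloured partition to the specific pattern $\vierpartwbwb$ is left entirely open. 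In the cited source the proof of (d) is organised so as to carry out this colour bookkeeping explicitly; as written, your (d) is a reasonable outline but stops short of a proof.
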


Each of these four cases is subdivided again into two cases.

\begin{defn}\label{DefGlobalColor}
A category of partitions $\CC\subset P^{\twocol}$ is 
\begin{itemize}
 \item \emph{globally colorized}, if $\paarpartww\otimes\paarpartbb\in\CC$
 \item and \emph{locally colorized} if $\paarpartww\otimes\paarpartbb\notin\CC$.
\end{itemize}
\end{defn}

Globally colorized categories of partitions are closed under under permutation of colors of the points (compose with $\idpartwb\otimes\idpartbw$, see \cite[Lemma 1.3]{tarragowebercombina}). Hence, the colorization of partitions only matters on a global level, i.e. the difference $c(p)$ between the number of white and black points of a partition $p\in\CC$ yields a crucial parameter, the \emph{global parameter} $k(\CC)$ given by the minimum of all $c(p)>0$, $p\in\CC$, see \cite[Def. 2.5]{tarragowebercombina}. Locally colorized categories in turn are more sensitive for the specific color patterns and we need additional parameters, mainly the \emph{local parameter} $d(\CC)$ which is based on the numbers $c(p_1)$ for subpartitions $p_1$ of $p\in\CC$ that sit between two legs of $p$. We distinguish between the cases when the two points of these legs are colored by the same color or not.

\subsection{All categories of (two-colored) noncrossing partitions}\label{SectClassif}

We review the classification of all categories of noncrossing  partitions obtained in \cite{tarragowebercombina}. By $b_k\in P^{\twocol}(0,k)$ we denote the partition consisting of a single block of $k$ points all of which are white, while $\tilde b_k\in P^{\twocol}(0,k)$ denotes its verticolor reflection (see Section \ref{SectOperations}), i.e. all $k$ points are black.

\begin{thm}[{\cite[Thm. 7.1]{tarragowebercombina}}]\label{ThmClassiGlob}
Let $\CC\subset NC^{\twocol}$ be a \emph{globally colorized} category of noncrossing partitions. Then it coincides with one of the following categories.
\begin{itemize}
 \item[Case $\OOO$:] $\categ{\OOO}{\glob}{k}=\langle\paarpartww^{\otimes \frac{k}{2}},\paarpartww\otimes\paarpartbb\rangle$ for $k\in 2\N_0$
 \item[Case $\HHH$:] $\categ{\HHH}{\glob}{k}=\langle b_k,\vierpartwbwb,\paarpartww\otimes\paarpartbb\rangle$ for $k\in 2\N_0$
 \item[Case $\SSS$:] $\categ{\SSS}{\glob}{k}=\langle \singletonw^{\otimes k},\vierpartwbwb,\singletonw\otimes\singletonb,\paarpartww\otimes\paarpartbb\rangle$ for $k\in \N_0$
 \item[Case $\BBB$:] $\categ{\BBB}{\glob}{k}=\langle \singletonw^{\otimes k}, \singletonw\otimes\singletonb,\paarpartww\otimes\paarpartbb\rangle$ for $ k\in 2\N_0$ 
 \item[or] $\categ{\BBB'}{\glob}{k}=\langle \singletonw^{\otimes k},\positionerwwbb,\singletonw\otimes\singletonb,\paarpartww\otimes\paarpartbb\rangle$ for $k\in \N_0$
\end{itemize}
\end{thm}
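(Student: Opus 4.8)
This statement is \cite[Theorem~7.1]{tarragowebercombina}; since it is being reviewed here, I only describe the strategy of the proof.

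The plan is to use global colorization to separate the combinatorics of the underlying uncoloured partition from the colour data. First I would recall the global parameter $k(\CC)$ of \cite[Def.~2.5]{tarragowebercombina}: writing $c(p)$ for the difference between the number of white and of black points of $p$ (with the sign convention that makes $c$ additive under $\otimes$ and composition, reversed by the involution, and unchanged by rotation), the values of $c$ on $\CC$ form a subgroup $k(\CC)\Z$ of $\Z$ for a unique $k(\CC)\in\N_0$. Next, since $\CC$ is globally colorized, composition with $\idpartwb\otimes\idpartbw$ makes $\CC$ invariant under permuting the colours of the points on a fixed row; together with rotation and the identity partitions this means that membership of $p$ in $\CC$ depends on the colouring of $p$ only through $c(p)$ and through the positions of the singleton blocks. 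So the task reduces to (i) determining which uncoloured partitions underlie elements of $\CC$, and (ii) determining, for each such uncoloured partition, which colourings occur.

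For (i) I would run through the four cases of Definition~\ref{DefCases}, using parts (b) and (d) of \cite[Lemmas~1.3 and~2.1]{tarragowebercombina} and the known (finite) classification of \emph{non}-coloured noncrossing categories of partitions. In case $\OOO$ all blocks have length exactly $2$, so the underlying partitions are precisely the noncrossing pair partitions and the only remaining freedom is $k(\CC)$, which is moreover even. In case $\HHH$, $\vierpartwbwb\in\CC$ forces the uncoloured $4$-block while $\singletonw\otimes\singletonb\notin\CC$ excludes the uncoloured singleton (if $\singletonw\in\CC$ then, by verticolor reflection and tensoring, $\singletonw\otimes\singletonb\in\CC$), so the underlying partitions are exactly the noncrossing partitions with all blocks of even length, and again $k(\CC)$ is even. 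In case $\SSS$ one obtains all noncrossing partitions and $k(\CC)\in\N_0$ is arbitrary. In case $\BBB$ the blocks have length $\le 2$, singletons occur, and the remaining freedom is $k(\CC)$ together with whether the positioner $\positionerwwbb$ lies in $\CC$ --- equivalently, whether a partition with an odd number of singleton blocks can occur, equivalently whether $k(\CC)$ may be odd; this splits case $\BBB$ into the families $\categ{\BBB}{\glob}{k}$ ($k$ even) and $\categ{\BBB'}{\glob}{k}$ ($k\in\N_0$). For (ii) one reconstructs $\CC$ from $k(\CC)$, and in case $\BBB$ from the presence of $\positionerwwbb$, by induction on the number of points: any admissible partition is assembled from the listed generators and superfluous points are capped off. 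Finally the categories in the list are pairwise distinct, being separated by their case, by $k(\CC)$, and --- within case $\BBB$ --- by whether $\positionerwwbb$ belongs to them.

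The routine ingredients are the well-definedness of $c$ and $k(\CC)$, the parity invariant for singleton blocks in case $\BBB$, and the verification that the listed generators really do generate categories. The main obstacle will be the reconstruction step of (ii), i.e.\ ruling out any further hidden colour constraint. This is most delicate in case $\BBB$: when $k(\CC)$ is odd the unicoloured pair $\paarpartww$ is \emph{not} available (its value $c(\paarpartww)=2$ does not lie in $k(\CC)\Z$), so two equally coloured singletons cannot be capped off and reductions that are harmless for uncoloured partitions are genuinely obstructed; tracking exactly which reductions survive is what forces the dichotomy $\categ{\BBB}{\glob}{k}$ versus $\categ{\BBB'}{\glob}{k}$ and the parity constraint on $k$ in the former. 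For the complete argument see \cite{tarragowebercombina}.
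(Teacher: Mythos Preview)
This theorem is not proved in the present paper; it is quoted verbatim from \cite[Thm.~7.1]{tarragowebercombina}, so there is no proof here to compare against. Your high-level strategy --- use global colorization to reduce membership in $\CC$ to the underlying uncoloured partition together with the value $c(p)$, then invoke the case split of Definition~\ref{DefCases} --- is indeed the approach taken in \cite{tarragowebercombina}.

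However, your analysis of case $\BBB$ contains a genuine error. You claim that the presence of $\positionerwwbb$ in $\CC$ is equivalent to the occurrence of a partition with an odd number of singleton blocks, and equivalent to $k(\CC)$ being allowed to be odd. This is false: for every even $k$, \emph{both} $\categ{\BBB}{\glob}{k}$ and $\categ{\BBB'}{\glob}{k}$ exist and are distinct, differing precisely by whether $\positionerwwbb\in\CC$. The positioner itself has two singletons and $c(\positionerwwbb)=0$, so neither a parity count of singletons nor the value of $k(\CC)$ can detect it. The actual invariant separating $\categ{\BBB}{\glob}{k}$ from $\categ{\BBB'}{\glob}{k}$ concerns the relative position of singletons and pair blocks (whether a singleton may sit \emph{between} the two legs of a pair), which is what the local parameter $d(\CC)$ of \cite{tarragowebercombina} encodes even in the globally colorized setting. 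Your later remark that ``when $k(\CC)$ is odd the unicoloured pair $\paarpartww$ is not available since $2\notin k(\CC)\Z$'' is also wrong for $k(\CC)=1$, and in any case is not the mechanism behind the $\BBB/\BBB'$ dichotomy.
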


The parameters $k\in\N_0$ in the above theorem are derived from the global parameters $k(\CC)$ as mentioned in the previous subsection. In the locally colorized case, we also need to take into account the local parameter $d(\CC)$  which is always a divisor of $k(\CC)$. By $d\vert k$ we mean that $k$ is a multiple of $d$. In particular, $d=0$ implies $k=0$, but the case $d\neq 0$ with $k=0$ may occur.

\begin{thm}[{\cite[Thm. 7.2]{tarragowebercombina}}]\label{ThmClassiLoc}
Let $\CC\subset NC^{\twocol}$ be a \emph{locally colorized} category of noncrossing partitions.Then it coincides with one of the following categories.
\begin{itemize}
 \item[Case $\OOO$:] $\categg{\OOO}{\loc}=\langle\emptyset\rangle$
 \item[Case $\HHH$:] $\categg{\HHH'}{\loc}=\langle\vierpartwbwb\rangle$ 
 \item[or] $\categ{\HHH}{\loc}{k,d}=\langle b_k,b_d\otimes\tilde b_d,\vierpartwwbb,\vierpartwbwb\rangle$ for $k,d\in\N_0\backslash\{1,2\}$, $d\vert k$ 
 \item[Case $\SSS$:] $\categ{\SSS}{\loc}{k,d}=\langle \singletonw^{\otimes k},\positionerd,\vierpartwbwb,\singletonw\otimes\singletonb\rangle$ for $k,d\in\N_0\backslash\{1\}$, $d\vert k$
 \item[Case $\BBB$:] $\categ{\BBB}{\loc}{k,d}=\langle\singletonw^{\otimes k}, \positionerd,\singletonw\otimes\singletonb\rangle$ for $k,d\in \N_0$, $d\vert k$
 \item[or] $\categ{\BBB'}{\loc}{k,d,0}=\langle\singletonw^{\otimes k}, \positionerd,\positionerwbwb,\singletonw\otimes\singletonb\rangle$ for $k,d\in \N_0\backslash\{1\}$, $d\vert k$
 \item[or] $\categ{\BBB'}{\loc}{k,d,\frac{d}{2}}=\langle\singletonw^{\otimes k}, \positionerd,\positionerrpluseins,\singletonw\otimes\singletonb\rangle$ for $k\in \N_0\backslash\{1\}$, $d\in 2\N_0\backslash\{0,2\}$, $d\vert k$ and $r=\frac{d}{2}$.
\end{itemize}
\end{thm}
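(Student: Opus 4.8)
The plan is to adapt the standard strategy for classifying categories of noncrossing partitions in the orthogonal setting (cf.\ \cite{banica2009liberation,raum2013full}) to the two-coloured situation, running it in parallel with the globally colorized classification (Theorem~\ref{ThmClassiGlob}) but with the extra \emph{local} parameter in play. Throughout one uses that $\CC$ is locally colorized, i.e.\ $\paarpartww\otimes\paarpartbb\notin\CC$ (Definition~\ref{DefGlobalColor}); in particular $\paarpartww\notin\CC$ and $\paarpartbb\notin\CC$, since either of these would, via verticolor reflection and tensor product, force $\paarpartww\otimes\paarpartbb\in\CC$. As a preliminary reduction, since $\CC$ is closed under rotation it is determined by the sets $\CC(0,l)$, and a noncrossing partition drawn on a single row has a canonical recursive decomposition into nested and juxtaposed blocks; this allows all arguments to proceed by induction on the number of points, peeling off an innermost block at each step.

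First I would split $\CC$ along Definition~\ref{DefCases} into the four cases $\OOO,\BBB,\HHH,\SSS$ and feed this into the structural lemma stated after Definition~\ref{DefCases} to pin down the possible block shapes: in cases $\OOO,\BBB$ all blocks have length $\leq 2$; in cases $\OOO,\HHH$ all blocks have length $\geq 2$; in cases $\BBB,\SSS$ one may create and freely permute neighbouring singletons; in cases $\HHH,\SSS$ one may merge neighbouring blocks meeting at two points of inverse colour. Case $\OOO$ is then settled immediately: every block is a pair, and if some pair were not "correctly coloured", rotating it would produce a partition from which $\paarpartww\otimes\paarpartbb$ can be built (e.g.\ $\paarpartww$, or $\idpartwb$ which rotates to $\paarpartww$), contradicting local colorization; hence $\CC$ is exactly the minimal category of correctly coloured noncrossing pairings, $\CC=\langle\emptyset\rangle=\categg{\OOO}{\loc}$.

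Next -- the heart of the argument -- in each of the cases $\BBB,\HHH,\SSS$ I would extract the two numerical invariants. The global parameter $k(\CC)=\min\{c(p)>0:p\in\CC\}$ (with $c(p)$ the number of white minus black points) is realized by one of $b_k$ or $\singletonw^{\otimes k}$ according to the case; the local parameter $d(\CC)$, measuring the colour-sums of subpartitions trapped strictly between two legs of a single block (with the sign depending on whether the two legs have equal colour), is genuinely restrictive here because colours cannot be permuted globally, and it is realized by a positioner-type partition, namely $\positionerd$ or $b_d\otimes\tilde b_d$. One proves $d\mid k$ by cutting a $k$-leg configuration into pieces of colour-sum $d$, and shows that the forbidden small values $d\in\{1,2\}$ collapse the case (a legal $d=1$ produces $\singletonw\otimes\singletonb$ or $\paarpartww$, leaving the case or the locally colorized world; similarly for the excluded $d=2$). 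A rewriting lemma then finishes each case: given $p\in\CC$, peel off an innermost block -- by the block-shape facts it is a singleton, a pair, or a longer block whose colour interaction with the row of points below it is controlled by $d(\CC)$ -- and iterate to express $p$ as a tensor/composition of the listed generators. The same peeling detects the finer splittings: case $\HHH$ into $\categg{\HHH'}{\loc}=\langle\vierpartwbwb\rangle$ versus $\categ{\HHH}{\loc}{k,d}$ according to whether $\vierpartwwbb\in\CC$, and case $\BBB$ into $\categ{\BBB}{\loc}{k,d}$, $\categ{\BBB'}{\loc}{k,d,0}$, $\categ{\BBB'}{\loc}{k,d,d/2}$ according to whether a twist partition ($\positionerwbwb$, resp.\ $\positionerrpluseins$) lies in $\CC$; a short gcd-type argument shows that no twist parameter strictly between $0$ and $d/2$ can survive.

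Finally I would verify that each listed category really is a category (routine, from the explicit generators and closure under tensor product, composition and involution), that the categories are pairwise distinct by computing the triple $(k(\CC),d(\CC),r)$ on generators, and that the boundary values $k=0$ and $d=0$ are consistent with the stated constraints (noting $d=0\Rightarrow k=0$ while $d\neq0,\,k=0$ does occur). The step I expect to be the main obstacle is the rewriting/peeling argument in the locally colorized $\BBB'$ and $\HHH$ families: one must simultaneously control the global colour-sum, all local colour-sums between block legs, and the twist parameter $r$, and prove that these data pin the category down with no intermediate possibilities -- in particular that $r=d/2$ is the unique admissible nonzero twist and that $d\notin\{1,2\}$ -- which requires a careful case analysis rather than a single uniform argument.
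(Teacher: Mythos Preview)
This theorem is not proved in the present paper: it is quoted verbatim from the companion combinatorial article \cite[Thm.~7.2]{tarragowebercombina}, and the paper only sketches the meaning of the parameters $k(\CC)$ and $d(\CC)$ in Section~\ref{SectCases}. So there is no in-paper proof to compare against.

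That said, your outline is a reasonable reconstruction of how such a classification must go and is consistent with the hints given here (the four cases of Definition~\ref{DefCases}, the structural lemma after it, and the description of the global and local parameters). A few points where your sketch is thinner than what the actual proof in \cite{tarragowebercombina} has to supply: the precise \emph{definition} of the local parameter $d(\CC)$ is more delicate than you indicate (one has to distinguish colour-sums between legs of the same colour versus inverse colours, and show these give a single well-defined $d$); the argument that in case~$\HHH$ the absence of $\vierpartwwbb$ forces \emph{every} block to have alternating colours (yielding $\categg{\HHH'}{\loc}$) is not automatic from your peeling scheme; and in case~$\BBB'$ the claim that the only admissible nonzero twist is $r=d/2$ needs an honest argument, not just ``gcd-type''. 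Your acknowledgement of this last point as the main obstacle is accurate. As a strategy your write-up is sound, but it is a roadmap rather than a proof, and the substance lies precisely in the case analyses you defer.
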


\section{$C^*$-algebraic relations associated to partitions}\label{SectCStar}

We can associate $C^*$-algebras to categories of partitions by  associating relations to partitions. This is the main step in the direction of defining unitary easy quantum groups.

\begin{defn}\label{DefDelta}
Let $p\in P^{\twocol}(k,l)$ and let $\alpha=(\alpha_1,\ldots,\alpha_k)$ and $\beta=(\beta_1,\ldots,\beta_l)$ be multi indices with $\alpha_i, \beta_j\in\{1,\ldots,n\}$. We decorate the upper points of $p$ with $\alpha$ and the lower ones with $\beta$. If now for every block of $p$ all of the corresponding indices coincide, we put $\delta_p(\alpha,\beta):=1$; otherwise $\delta_p(\alpha,\beta):=0$. Here, the colorization of $p$ is irrelevant.
\end{defn}

\begin{ex}
The following choices of $\alpha$ and $\beta$ yield:
\setlength{\unitlength}{0.5cm}
\begin{center}
\begin{picture}(20,5)
\put(-1,4.85){\partii{1}{1}{2}}
\put(-1,4.85){\partii{1}{3}{4}}
\put(-1,0.85){\uppartii{1}{1}{2}}
\put(-1,0.85){\upparti{2}{3}}
\put(0.05,4){1}
\put(1.05,4){1}
\put(2.05,4){5}
\put(3.05,4){5}
\put(0.05,0.1){2}
\put(1.05,0.1){2}
\put(2.05,0.1){5}
\put(5,2){$\delta=1$}
\put(10,4.85){\partii{1}{1}{2}}
\put(10,4.85){\partii{1}{3}{4}}
\put(10,0.85){\uppartii{1}{1}{2}}
\put(10,0.85){\upparti{2}{3}}
\put(11.05,4){1}
\put(12.05,4){1}
\put(13.05,4){5}
\put(14.05,4){1}
\put(11.05,0.1){2}
\put(12.05,0.1){2}
\put(13.05,0.1){5}
\put(16,2){$\delta=0$}
\end{picture}
\end{center}
\end{ex}

\begin{defn}\label{DefRel}
Let $n\in\N$ and let $A$ be a $C^*$-algebra generated by $n^2$ elements $u_{ij}$, $1\leq i,j\leq n$.  Let $p\in P^{\twocol}(k,l)$ be a partition and let $r=(r_1,\ldots,r_k)\in\{\circ,\bullet\}^k$ be its upper color pattern and $s=(s_1,\ldots,s_l)\in\{\circ,\bullet\}^l$ be its lower color pattern. We put $u_{ij}^\circ:=u_{ij}$ and $u_{ij}^\bullet:=u_{ij}^*$. 

We say that the generators $u_{ij}$ \emph{fulfill the relations} $R(p)$, if for all $\beta_1,\ldots, \beta_l\in\{1,\ldots,n\}$ and for all $i_1,\ldots,i_k\in\{1,\ldots,n\}$, we have:
\[\sum_{\alpha_1,\ldots,\alpha_k=1}^n \delta_p(\alpha,\beta) u^{r_1}_{\alpha_1i_1}\ldots u^{r_k}_{\alpha_ki_k}=\sum_{\gamma_1,\ldots,\gamma_l=1}^n \delta_p(i,\gamma) u^{s_1}_{\beta_1\gamma_1}\ldots u^{s_l}_{\beta_l\gamma_l}\]

The left-hand side of the equation is $\delta_p(\emptyset,\beta)$ if $k=0$ and analogous $\delta_p(i,\emptyset)$ for the right-hand side in the case $l=0$.
\end{defn}

Using this definition, we can now give a list of relations associated to partitions that appeared throughout the classification (see Section \ref{SectClassif}) of categories of noncrossing partitions. We denote by $u$ the matrix $u=(u_{ij})_{1\leq i,j\leq n}$, and $\bar u:=(u_{ij}^*)$. Furthermore, $\rot_t(p)\in P^{\twocol}(t,k)$ denotes the partitions obtained from $p\in P^{\twocol}(0,k+t)$ by rotating the last $t$ points to the upper line. If we simply write $\rot(p)$, we do not specify which or how many of the points are rotated. It is often more convenient to consider the relations of a partition in some rotated form rather than of the partition itself. 

Regarding the relations, it is reasonable to reformulate the category $\OOO_{\glob}(k)$ using the partition $\paarpartww^{\nest(k)}$  obtained from nesting the partition $\paarpartww$ $k$-times into itself, i.e.:

\setlength{\unitlength}{0.5cm}
\begin{center}
\begin{picture}(10,4)
\put(0,1){$\paarpartww^{\nest(3)}=$}
\put(4,0.35){\uppartii{3}{1}{6}}
\put(4,0.35){\uppartii{2}{2}{5}}
\put(4,0.35){\uppartii{1}{3}{4}}
\put(5.05,0){$\circ$}
\put(6.05,0){$\circ$}
\put(7.05,0){$\circ$}
\put(8.05,0){$\circ$}
\put(9.05,0){$\circ$}
\put(10.05,0){$\circ$}
\end{picture}
\end{center}

\begin{lem}
If $\CC\subset P^{\twocol}$ is a category of partitions, then $\paarpartww^{\otimes k}\in\CC$ if and only if $\paarpartww^{\nest(k)}\in\CC$. In particular $\OOO_{\glob}(k)=\langle\paarpartww^{\nest(\frac{k}{2})},\paarpartww\otimes\paarpartbb\rangle$.
\end{lem}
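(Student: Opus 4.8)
The plan is to prove both directions of the equivalence $\paarpartww^{\otimes k}\in\CC \Leftrightarrow \paarpartww^{\nest(k)}\in\CC$ by showing that each partition can be obtained from the other using the category operations (tensor product, composition, involution, rotation, verticolor reflection — recalling that categories are closed under rotation by \cite[Lemma 1.1]{tarragowebercombina}), together with the base partitions $\paarpartwb$ and $\idpartww$ which are always available.

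For the direction $\paarpartww^{\nest(k)}\in\CC \Rightarrow \paarpartww^{\otimes k}\in\CC$, I would proceed by induction on $k$. The idea is that the outermost pair of a nested partition can be ``capped off'': rotating $\paarpartww^{\nest(k)}$ so that its two outer legs both sit on the upper row, one obtains a partition of the form $\paarpartww$ (the outer block, now a rotated pair on top) sitting around $\paarpartww^{\nest(k-1)}$; composing with suitable identity and pair partitions to close the outer strand yields $\paarpartww^{\nest(k-1)}\otimes\paarpartww$ up to rotation, whence $\paarpartww^{\otimes(k-1)}\otimes\paarpartww$ by the induction hypothesis. More concretely, I would rotate the appropriate points to bring the nested pairs into a position where composing with $\idpartww^{\otimes(k-1)}\otimes\paarpartww^*$ (a ``cap'') removes one nesting level while detaching one free pair, and then iterate.

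For the reverse direction $\paarpartww^{\otimes k}\in\CC \Rightarrow \paarpartww^{\nest(k)}\in\CC$, again induct on $k$: from $\paarpartww^{\otimes k}$ one gets $\paarpartww^{\otimes(k-1)}$ by composing with a cap, hence $\paarpartww^{\nest(k-1)}$ by induction; then one builds the $k$-fold nesting by tensoring $\paarpartww^{\nest(k-1)}$ with one more $\paarpartww$ at the appropriate (outer) position and ``merging'' — i.e.\ rotating two outer legs down and composing with $\paarpartww$ (as a $P^{\twocol}(0,2)$ partition glued on the outside) to wrap the new pair around the existing nested block. The colorings need to be tracked carefully: all points in $\paarpartww^{\nest(k)}$ and $\paarpartww^{\otimes k}$ are white, while rotation inverts colors, so the intermediate partitions will be two-colored and one must ensure the available base partitions $\paarpartwb$, $\paarpartbw$, $\idpartww$, $\idpartbb$ suffice to realize all the compositions with matching colors. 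This bookkeeping of colors through the rotations is the main obstacle, but it is routine given the framework; the combinatorial skeleton of the argument is exactly the classical (uncolored) identity that a nested chain of $k$ pairs and $k$ disjoint pairs generate the same category.

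Finally, the ``in particular'' statement is immediate: by \cite[Thm.\ 7.1]{tarragowebercombina} (Theorem~\ref{ThmClassiGlob} here), $\OOO_{\glob}(k)=\langle\paarpartww^{\otimes\frac{k}{2}},\paarpartww\otimes\paarpartbb\rangle$, and applying the equivalence just proved with $k$ replaced by $\frac{k}{2}$ shows $\paarpartww^{\otimes\frac{k}{2}}$ and $\paarpartww^{\nest(\frac{k}{2})}$ generate the same subcategory modulo $\paarpartww\otimes\paarpartbb$, so $\OOO_{\glob}(k)=\langle\paarpartww^{\nest(\frac{k}{2})},\paarpartww\otimes\paarpartbb\rangle$.
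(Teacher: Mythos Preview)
Your inductive strategy is the natural one in the uncolored setting, but the step you call ``routine'' is exactly where the argument breaks down in the two-colored world. To peel off (or add) one level of nesting you need to cap a pair of neighbouring \emph{white} points, i.e.\ you need $\baarpartww$ (equivalently $\paarpartww$) in $\CC$; likewise, after rotating the outer legs of $\paarpartww^{\nest(k)}$ to the top they become \emph{black}, and closing them would require $\paarpartbb$. Neither of these is among the base partitions $\paarpartwb,\paarpartbw,\idpartww,\idpartbb$, and in fact the ``wrapping'' move you describe, carried out with the only caps that are always available, reduces to a cyclic shift of the lower points and never increases the nesting depth. So the induction as sketched does not go through.

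What the paper does instead is to first extract the global-colorization partition: from $\paarpartww^{\otimes k}\in\CC$ (with $k\geq 1$) one deduces $\paarpartww\otimes\paarpartbb\in\CC$ via verticolor reflection, tensoring, and capping the interface with $\baarpartwb$ (which \emph{is} always available). Once $\paarpartww\otimes\paarpartbb\in\CC$, the category is closed under permutation of colours of neighbouring points, and then one can recolor $\paarpartww^{\otimes k}\otimes\paarpartwb^{\nest(k)}$ into $\paarpartwb^{\otimes k}\otimes\paarpartww^{\nest(k)}$ and cap off the mixed-colour part. The missing ingredient in your sketch is precisely this preliminary step establishing colour permutation; add it, and your induction becomes legitimate.
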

\begin{proof}
Let $k\in\N_0\backslash\{0\}$ and $\paarpartww^{\otimes k}\in\CC$. We have $\paarpartwb^{\nest(k)}\in\CC$ by \cite[Lemma 1.1(d)]{tarragowebercombina} and hence $\paarpartww^{\otimes k}\otimes\paarpartwb^{\nest(k)}\in\CC$. By \cite[Lemma 1.1]{tarragowebercombina}, we know that $\paarpartww\otimes\paarpartbb\in\CC$ and hence $\CC$ is closed under permutation of colors \cite[Lemma 1.3]{tarragowebercombina}. Thus $\paarpartwb^{\otimes k}\otimes\paarpartww^{\nest(k)}\in\CC$ and therefore $\paarpartww^{\nest(k)}\in\CC$ using \cite[Lemma 1.1(b)]{tarragowebercombina}. The converse direction is similar.
\end{proof}

The next relations can  directly be derived from Definition \ref{DefRel}.

\[
R(\paarpartwb): \sum_k u_{ik}u_{jk}^*=\delta_{ij}, \textnormal{ i.e. } uu^*=1 
\qquad\qquad\qquad\qquad\qquad\qquad\qquad\qquad\qquad\qquad\qquad\qquad\qquad\qquad\qquad
\]
\[
R(\baarpartbw): \sum_k u_{ki}^*u_{kj}=\delta_{ij}, \textnormal{ i.e. } u^*u=1
\qquad\qquad\qquad\qquad\qquad\qquad\qquad\qquad\qquad\qquad\qquad\qquad\qquad\qquad\qquad
\]
\[
R(\paarpartbw): \sum_k u_{ik}^*u_{jk}=\delta_{ij}, \textnormal{ i.e. } \bar u(\bar u)^*=1
\qquad\qquad\qquad\qquad\qquad\qquad\qquad\qquad\qquad\qquad\qquad\qquad\qquad\qquad\qquad
\]
\[
R(\baarpartwb): \sum_k u_{ki}u_{kj}^*=\delta_{ij}, \textnormal{ i.e. } (\bar u)^*\bar u=1 
\qquad\qquad\qquad\qquad\qquad\qquad\qquad\qquad\qquad\qquad\qquad\qquad\qquad\qquad\qquad
\]
\[
R(\crosspartwwww)=R(\crosspartbbbb): u_{ij}u_{kl}=u_{kl}u_{ij}
\qquad\qquad\qquad\qquad\qquad\qquad\qquad\qquad\qquad\qquad\qquad\qquad\qquad\qquad\qquad
\]
\[
R(\crosspartwbbw)=R(\crosspartbwwb): u_{ij}u_{kl}^*=u_{kl}^*u_{ij}
\qquad\qquad\qquad\qquad\qquad\qquad\qquad\qquad\qquad\qquad\qquad\qquad\qquad\qquad\qquad
\]
\[
R(\idpartwb)=R(\idpartbw)=R(\rot_1(\paarpartww)): u_{ij}=u_{ij}^*, \textnormal{ i.e. } u=\bar u
\qquad\qquad\qquad\qquad\qquad\qquad\qquad\qquad\qquad\qquad\qquad\qquad\qquad\qquad\qquad
\]
\[
R(\rot_l(\paarpartww^{\nest(l)})): u_{i_1j_1}\ldots u_{i_lj_l}= u_{i_1j_1}^*\ldots u_{i_lj_l}^*
\qquad\qquad\qquad\qquad\qquad\qquad\qquad\qquad\qquad\qquad\qquad\qquad\qquad\qquad\qquad
\]
\[
R(\idpartbw\otimes \idpartwb)=R(\rot(\paarpartww\otimes\paarpartbb)): u_{ij}^*u_{kl}=u_{ij}u_{kl}^*
\qquad\qquad\qquad\qquad\qquad\qquad\qquad\qquad\qquad\qquad\qquad\qquad\qquad\qquad\qquad
\]
\[
R(\rot_1(\singletonw\otimes\singletonb)): \left(\sum_{k} u_{kj}\right)=\left(\sum_{l} u_{il}\right)
\qquad\qquad\qquad\qquad\qquad\qquad\qquad\qquad\qquad\qquad\qquad\qquad\qquad\qquad\qquad
\]
\[
R(\rot_t(\singletonw^{\otimes s+t})): \left(\sum_{k_1} u_{k_1j_1}\right)\ldots\left(\sum_{k_s} u_{k_sj_s}\right)=\left(\sum_{l_1} u_{i_1l_1}^*\right)\ldots\left(\sum_{l_t} u_{i_tl_t}^*\right)
\qquad\qquad\qquad\qquad\qquad\qquad\qquad\qquad\qquad\qquad\qquad\qquad\qquad\qquad\qquad
\]
\[
R(\singletonw^{\otimes k}):\left(\sum_{l_1} u_{l_1j_1}\right)\ldots \left(\sum_{l_k} u_{l_kj_k}\right)=1
\qquad\qquad\qquad\qquad\qquad\qquad\qquad\qquad\qquad\qquad\qquad\qquad\qquad\qquad\qquad
\]
\[
R(\vierpartrotwbwb)=R(\rot_2(\vierpartwbwb)): u_{ki}u_{kj}^*=u_{ik}u_{jk}^*=0 \emph{ if }i\neq j
\qquad\qquad\qquad\qquad\qquad\qquad\qquad\qquad\qquad\qquad\qquad\qquad\qquad\qquad\qquad
\]
\[
R(\vierpartrotbwbw): u_{ki}^*u_{kj}=u_{ik}^*u_{jk}=0 \emph{ if }i\neq j
\qquad\qquad\qquad\qquad\qquad\qquad\qquad\qquad\qquad\qquad\qquad\qquad\qquad\qquad\qquad
\]
\[
R(\vierpartrotwwww)=R(\rot_2(\vierpartwwbb)): u_{ki}u_{kj}=u_{ik}u_{jk}=0 \emph{ if }i\neq j
\qquad\qquad\qquad\qquad\qquad\qquad\qquad\qquad\qquad\qquad\qquad\qquad\qquad\qquad\qquad
\]
\[
R (\rot_d(b_d\otimes \tilde b_d)): \sum_k \delta_{i_1i_2}\delta_{i_2i_3}\ldots\delta_{i_{d-1}i_d}u_{kj_1}\ldots u_{kj_d}=\sum_l \delta_{j_1j_2}\delta_{j_2j_3}\ldots\delta_{j_{d-1}j_d}u_{i_1l}\ldots u_{i_dl}
\qquad\qquad\qquad\qquad\qquad\qquad\qquad\qquad\qquad\qquad\qquad\qquad\qquad\qquad\qquad
\]
\[
R (b_k): \sum_l u_{i_1l}\ldots u_{i_kl}=\delta_{i_1i_2}\delta_{i_2i_3}\ldots\delta_{i_{k-1}i_k}
\qquad\qquad\qquad\qquad\qquad\qquad\qquad\qquad\qquad\qquad\qquad\qquad\qquad\qquad\qquad
\]
\[
R (\rot_t(b_{s+t})): \delta_{i_1i_2}\delta_{i_2i_3}\ldots\delta_{i_{t-1}i_t}u_{i_1j_1}\ldots u_{i_1j_s}= \delta_{j_1j_2}\delta_{j_2j_3}\ldots\delta_{j_{s-1}j_s}u_{i_1j_1}^*\ldots u_{i_tj_1}^*
\qquad\qquad\qquad\qquad\qquad\qquad\qquad\qquad\qquad\qquad\qquad\qquad\qquad\qquad\qquad
\]
\[
R(\rot_{d+1}(\positionerd)):u_{ij}\left(\sum_{k_1} u_{k_1j_1}\right)\ldots \left(\sum_{k_d} u_{k_dj_d}\right)=\left(\sum_{l_1} u_{i_1l_1}\right)\ldots \left(\sum_{l_d} u_{i_dl_d}\right)u_{ij}
\qquad\qquad\qquad\qquad\qquad\qquad\qquad\qquad\qquad\qquad\qquad\qquad\qquad\qquad\qquad
\]
\[
R(\rot_2(\positionerwbwb)):u_{ij}\left(\sum_{k_1} u_{k_1j_1}^*\right)=\left(\sum_{l_1} u_{i_1l_1}\right)u_{ij}^*
\qquad\qquad\qquad\qquad\qquad\qquad\qquad\qquad\qquad\qquad\qquad\qquad\qquad\qquad\qquad
\]
\[
R(\rot_r(\positionerrpluseins)): u_{ij}\left(\sum_{k_1} u_{k_1j_1}\right)\ldots \left(\sum_{k_{r-1}} u_{k_{r-1}j_{r-1}}\right)=\left(\sum_{l_1} u_{i_1l_1}\right)\ldots \left(\sum_{l_{r+1}} u_{i_{r+1}l_{r+1}}\right)u_{ij}^*
\qquad\qquad\qquad\qquad\qquad\qquad\qquad\qquad\qquad\qquad\qquad\qquad\qquad\qquad\qquad
\]

\section{Definition of unitary easy quantum groups}\label{SectDefUnitary}

In this section we define unitary easy quantum groups. We first recall some notions and facts about compact matrix quantum groups.

\subsection{Compact matrix quantum group}\label{SectCMQG}

In \cite{woronowicz1987compact} (see also \cite{woronowicz1991remark}), Woronowicz gave a definition of a compact matrix quantum group within the $C^*$-algebraic framework.

\begin{defn}\label{DefCMQG}
A \emph{compact matrix quantum group}  for $n\geq 1$ is given by a unital $C^*$-algebra $A$ such that 
\begin{itemize}
\item the $C^*$-algebra $A$ is generated by $n^2$ elements $u_{ij}$, $1\leq i,j\leq n$,
\item the matrices $u=(u_{ij})$ and $\bar u=(u_{ij}^*)$ are invertible in $\mathcal{M}_{n}(A)$,
\item and the map $\Delta:A\to A\otimes_{\min}A$ given by $\Delta(u_{ij})=\sum_k u_{ik}\otimes u_{kj}$ is a $^*$-homomorphism.
\end{itemize}
\end{defn}

We sometimes also write $A=C(G)$ -- even if $A$ is noncommutative -- and speak of $G$ as the quantum group, sometimes specifying $(G,u)$ to indicate the matrix $u$ of the above definition. We then say that $(G, u^G)$ is a quantum subgroup of $(H,u^H)$, writing $G\subset H$, if there exists a $^*$-homomorphism $\phi:C(H)\to C(G)$ mapping $u_{ij}^H\mapsto u_{ij}^G$.  Note that there exist weaker definitions of quantum subgroups in the more general context of compact quantum groups. We should remark that the usual definition of compact matrix quantum groups requires $u$ and $u^t=(u_{ji})$ to be invertible, but since $\bar u^*=u^t$, our definition is an equivalent one.

In order to get a feeling for the above definition, observe that any compact group $G\subset M_n(\C)$ can be viewed as a compact matrix quantum group by considering $A=C(G)$, the algebra of continuous functions over $G$, and $u_{ij}:G\to\C$ the coordinate functions mapping a matrix $(g_{kl})\in G$ to the entry $g_{ij}$. Here, the map $\Delta$ is nothing but the dualized matrix multiplication. Thus, compact matrix quantum groups generalize compact matrix groups $G\subset M_n(\C)$.
Moreover, any compact matrix quantum group is a compact quantum group in the more general sense as defined by Woronowicz \cite{Wor98}.

The following definitions are due to Woronowicz \cite[Sect. 1]{woronowicz1987compact}.

\begin{defn}\label{DefW}
Let $(G,u)$ and $(H,v)$ be compact matrix quantum groups with $u$ and $v$ of size $n$. We say that $G$ and $H$ are
\begin{itemize}
\item[(a)] \emph{identical}, if there is a $*$-isomorphism from $C(G)$ to $C(H)$ mapping $u_{ij}$ to $v_{ij}$, hence mapping $u$ to $v$.
\item[(b)] \emph{similar}, if there is an invertible matrix $T\in GL_n(\C)$ and a $*$-isomorphism from $C(G)$ to $C(H)$ mapping $u$ to $TvT^{-1}$.
\end{itemize}
\end{defn}

We refer to \cite{woronowicz1987compact} and \cite{woronowicz1991remark} or to the surveys \cite{kustermans1999survey}, \cite{maes1998notes} and the books \cite{timmermann2008invitation}, \cite{neshveyev2013compact} for more information on the subject.

\subsection{An ad hoc definition of unitary easy quantum groups} \label{SectUnitaryEasy}

We first give an ad hoc definition of unitary easy quantum groups, which somehow seems to come out of the blue but will be put into a wider context in the next subsections. For the moment, recall that we associated certain relations $R(p)$ to a partition $p\in P^{\twocol}$, see Definition \ref{DefRel}.

\begin{defn}[easy QG -- ad hoc definition]\label{DefUnitaryEasy}
A compact matrix quantum group $G$ is called \emph{easy (in its full version)}, if there is a set of partitions $\CC_0\subset P^{\twocol}$ such that the $C^*$-algebra $C(G)$ associated to $G$ is the universal unital $C^*$-algebra generated by elements $u_{ij}$, $1\leq i,j\leq n$ such that $u$ and $\bar u$ are unitary and the $u_{ij}$ satisfy the relations $R(p)$ for $p\in \CC_0$.

The quantum group is called \emph{orthogonal easy (in its full version)} if in addition all $u_{ij}$ are selfadjoint.
\end{defn}

\begin{rem}\label{RemRelations}
One should check that any universal $C^*$-algebra of the form as in the above definition gives rise to a compact matrix quantum group in the sense of Definition \ref{DefCMQG}. To do so, it is straightforward to  verify that the relations $R(p)$ are satisfied by the elements $u_{ij}':=\sum_k u_{ik}\otimes u_{kj}$ whenever the $u_{ij}$ satisfy the relations $R(p)$. Note that as mentioned in Section \ref{SectCStar}, the relations making $u$ and $\bar u$ unitaries are also of the form $R(p)$ for some $p\in P^{\twocol}$.

Moreover, one can prove that if $A$ is the universal $C^*$-algebra generated by the $u_{ij}$ and the relations $R(p)$ for $p\in\CC_0$, together with those making $u$ and $\bar u$ unitaries, and if $B$ is the universal $C^*$-algebra  generated by elements $u_{ij}$ and the relations $R(p)$ for all partitions $p$ from the category $\CC=\langle\CC_0\rangle$ generated by the set $\CC_0$, then $A$ and $B$ coincide. Such considerations may also be found in an article in preparation by the second author.
\end{rem}

\begin{ex}\label{ExWang}
The following three quantum groups as defined by Wang \cite{wang1995free, wang1998quantum} are  easy  quantum groups:
\begin{itemize}
\item The \emph{free unitary quantum group} $U_n^+$ is given by the universal $C^*$-algebra generated by $u_{ij}$ and the relations making $u$ and $\bar u$ unitaries. It is easy with $\CC_0=\emptyset$.
\item The \emph{free orthogonal quantum group} $O_n^+$ is given by the quotient of $C(U_n^+)$ by the relations making the $u_{ij}$ selfadjoint. It is orthogonal easy with $\CC_0=\{\idpartwb\}$.
\item The \emph{free symmetric quantum group} $S_n^+$ is given by the quotient of $C(O_n^+)$ by the relations making the $u_{ij}$ projections with $\sum_k u_{ik}=\sum_k u_{kj}=1$. It is orthogonal easy with $\CC_0=\{\idpartwb,\singletonw,\downsingletonw,\vierpartrotwwww\}$.
\end{itemize}
\end{ex}

\begin{ex}
The symmetric group $S_n\subset M_n(\C)$ can be viewed as a compact matrix quantum group with algebra $C(S_n)$ and the coordinate functions $u_{ij}$ as generators (see Section \ref{SectCMQG}). It is easy with $\CC_0=\{\idpartwb,\singletonw,\downsingletonw,\vierpartrotwwww,\crosspartwwww\}$. We have $S_n\subset S_n^+\subset O_n^+\subset U_n^+$.
\end{ex}

\subsection{Tannaka-Krein for compact matrix quantum groups}

The above definition of easy quantum groups is motivated from the following result.
In \cite{woronowicz1988tannaka}, Woronowicz revealed an abstract way of obtaining compact matrix quantum groups from certain tensor categories. To be more precise: from concrete monoidal $W^*$-categories. Consider the following data:
\begin{itemize}
\item Let $R$ be a set endowed with a binary operation $\cdot:R\times R\to R, (r,s)\mapsto rs$.
\item For each $r\in R$, let $H_r$ be a finite-dimensional Hilbert space.
\item For each $r,s\in R$, let $\Mor(r,s)$ be a linear subspace of the space of all linear maps $T:H_r\to H_s$.
\end{itemize}

If now $\mathcal R:=(R,\cdot,(H_r)_{r\in R}, (\Mor(r,s))_{r,s\in R},)$ fulfills the following list of axioms, we say that $\mathcal R$ is a \emph{concrete monoidal $W^*$-category}:
\begin{itemize}
\item[(i)] $\mathcal R$ is closed under tensor products, i.e. if $T_i\in \Mor(r_i,s_i)$, for $i=1,2$, then also $T_1\otimes T_2\in\Mor(r_1r_2,s_1s_2)$ and $H_{rs}=H_r\otimes H_s$.
\item[(ii)] $\mathcal R$ is closed under composition, i.e. if  $T_i\in \Mor(r_i,s_i)$, for $i=1,2$, and $s_1=r_2$ then also $T_2T_1\in\Mor(r_1,s_2)$.
\item[(iii)] $\mathcal R$ is closed under involution, i.e. if $T\in \Mor(r,s)$, then $T^*\in\Mor(s,r)$.
\item[(iv)] The identity map $\id_r$ is in $\Mor(r,r)$, for all $r\in R$.
\item[(v)] If $H_r=H_s$ and $\id\in\Mor(r,s)$, then $r=s$.
\item[(vi)] We have $(rs)t=r(st)$ for all $r,s,t\in R$.
\item[(vii)] There is an element $1\in R$ such that $H_1=\C$ and $1r=r1=r$.
\end{itemize}

For $f\in R$ we denote by $\bar f\in R$ the element in  $R$ -- if it exists -- such that there is an invertible antilinear mapping $j: H_f\to H_{\bar f}$ with $t_j\in \Mor(1,f\bar f)$ and $\bar t_j\in \Mor(\bar ff,1)$, where for an orthonormal basis $(e_i)$ of $H_f$ and all $a\in H_{\bar f}, b\in H_f$, the maps $t_j$ and $\bar t_j$ are given by:
\[t_j(1)=\sum_i e_i\otimes j(e_i) \qquad\textnormal{and}\qquad \bar t_j(a\otimes b)=\langle j^{-1}(a),b\rangle_{H_f}\]
We say that $\{f,\bar f\}$ \emph{generates} $\mathcal R$ if for any $s\in R$ there exist $b_k\in \Mor(r_k,s)$, $k=1,\ldots,m$ such that $\sum b_kb_k^*=\id_s$ and every $r_k$ is a product of the form $r_k=f_1\ldots f_{n_k}$, where $f_i\in\{f,\bar f\}$.

Woronowicz also has a notion of completeness for a concrete monoidal $W^*$-category (using unitary equivalence, projections onto smaller objects, and direct sums), and he can show that any such category can be completed.

 His main theorem in \cite{woronowicz1988tannaka} is then that compact matrix quantum groups can be reconstructed from their intertwiner spaces. To be more precise, let $G$ be a compact matrix quantum group and let $R(G)$ be the class  of its finite-dimensional unitary representations. Every such representation $u^r\in R(G)$ is of the form $u^r=\sum_{i,j}e_{ij}\otimes u_{ij}^r\in B(H_r)\otimes C(G)$, where $e_{ij}$ are the matrix units in $B(H_r)$. We may equip $R(G)$ with the binary operation given by the tensor product of representations, $u^r\otimes u^s=\sum_{i,j,k,l}e_{ij}\otimes e_{kl}\otimes u_{ij}^ru_{kl}^s\in B(H_r\otimes H_s)\otimes C(G)$. Furthermore, we denote \emph{the space of intertwiners} by:
\[\Mor(u^r,u^s):=\{T:H_r\to H_s\;|\; Tu^r=u^sT\}\]
  We now formulate Woronowicz's theorem in the Kac case, i.e. when the conjugate of $u$ is given by $\bar u=(u_{ij}^*)$ (in general the form the conjugate of $u$ depends on the above map $j$).

\begin{thm}[{Tannaka-Krein for compact matrix quantum groups \cite{woronowicz1988tannaka}}]\label{ThmWorTK}\quad

\begin{itemize}
\item[(a)] Let $(G,u)$ be a compact matrix quantum group in the Kac case. The class $R(G)$ of all finite-dimensional unitary representations of $G$ is a complete concrete monoidal $W^*$-category generated by $\{u,\bar u\}$.
\item[(b)] Conversely, let $\mathcal R$ be a concrete monoidal $W^*$-category generated by $\{f,\bar f\}$. Then there is a compact matrix quantum group $(G,u)$ such that:
\begin{itemize}
\item[$\bullet$] $R(G)$ is a model of $\mathcal R$, i.e. for every $r\in R$ there is a representation $u^r\in R(G)$ with $u^r\in B(H_r)\otimes C(G)$, $u^{rs}=u^r\otimes u^s$, $Tu^r=u^sT$ for all $T\in\Mor(r,s)$, and $u^f=u$.
\item[$\bullet$] The completion of $\mathcal R$ coincides with $R(G)$.
\item[$\bullet$] The quantum group $G$ is universal in the sense, that any other model of $\mathcal R$ yields a quantum subgroup of $G$.
\end{itemize}
\end{itemize}
\end{thm}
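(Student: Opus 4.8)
The plan is to prove the two directions separately; part (a) is essentially bookkeeping, while part (b) is the substantive reconstruction.

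For (a), given $(G,u)$ in the Kac case, take $R$ to be the set of finite-dimensional unitary representations, $H_r$ the representation space, $\Mor(r,s)=\{T:H_r\to H_s\mid Tu^r=u^sT\}$, and the binary operation the tensor product of representations. Then axioms (i)--(vii) are verified directly: closure under $\otimes$, composition and adjoints is immediate from the intertwining relation; the identity axiom and the associativity/unit axioms are formal; axiom (v) holds because an intertwiner equal to the identity on the common underlying space forces the two representations to be literally equal. The conjugate of $u$ is $\bar u=(u_{ij}^*)$ with $j$ the canonical antiunitary (Kac case), and $\{u,\bar u\}$ generates $R(G)$ because, by Peter--Weyl theory for compact quantum groups --- which rests only on the existence and faithfulness of the Haar state together with complete reducibility via averaging --- every irreducible representation is a subrepresentation of some iterated tensor product of $u$ and $\bar u$, equivalently the matrix coefficients of such tensor products are dense in $C(G)$. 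Completeness of the category (closure under unitary equivalence, subobjects and finite direct sums) is exactly complete reducibility together with the splitting of idempotent intertwiners, again obtained by averaging against the Haar state.

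For (b), set $n=\dim H_f$ and fix an orthonormal basis of $H_f$. First build the underlying $*$-bialgebra: let $\mathcal A$ be the quotient of the free unital $*$-algebra on symbols $u_{ij}$, $1\le i,j\le n$, by the two-sided $*$-ideal generated by all relations $Tu^r=u^sT$, where $r,s$ run over words in $\{f,\bar f\}$, $u^r$ denotes the corresponding tensor product of copies of $u=(u_{ij})$ and $\bar u=(u_{ij}^*)$, and $T$ runs over $\Mor(r,s)$. One checks that $\Delta(u_{ij}):=\sum_k u_{ik}\otimes u_{kj}$ and $\epsilon(u_{ij}):=\delta_{ij}$ descend to $\mathcal A$: well-definedness of $\Delta$ uses axiom (i), since the relation for $T\in\Mor(r,s)$ is carried by $\Delta$ into the relation for the same $T$ relative to $u^{rs}=u^r\otimes u^s$, which again lies in the ideal; well-definedness of $\epsilon$ uses axiom (iv) together with the generation hypothesis. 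The conjugate structure yields invertibility of $u$ and $\bar u$ in $\mathcal{M}_n(\mathcal A)$: the morphisms $t_j\in\Mor(1,f\bar f)$ and $\bar t_j\in\Mor(\bar f f,1)$ translate, via the defining relations, into identities asserting that a matrix built from $\bar u$ is a two-sided inverse of $u$, and symmetrically. Thus $\mathcal A$ is a $*$-bialgebra with invertible fundamental matrix.

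The analytic heart of (b) is to promote $\mathcal A$ to a $C^*$-algebra with the correct representation theory. I would construct the Haar functional $h$ on $\mathcal A$ --- for instance as a weak limit of Ces\`aro averages of convolution powers of an arbitrary state, or directly from cosemisimplicity --- and show $h$ is positive and faithful; here the rigidity encoded in $t_j,\bar t_j$ forces complete reducibility of the comodules, hence the Peter--Weyl decomposition of $\mathcal A$, while the $W^*$-structure of the morphism spaces (norm-closed, stable under adjoint) is what makes the resulting inner product positive. Performing GNS with respect to $h$ produces a bounded $*$-representation, so $\mathcal A$ carries a largest $C^*$-seminorm; define $C(G)$ to be the associated universal $C^*$-completion, which is then a compact matrix quantum group because the three clauses of Definition~\ref{DefCMQG} are built in. One then exhibits $R(G)$ as a model of $\mathcal R$: $r\mapsto u^r$ is monoidal by construction, satisfies $Tu^r=u^sT$ for $T\in\Mor(r,s)$, and has $u^f=u$. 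That the completion of $\mathcal R$ \emph{coincides} with $R(G)$ amounts to this functor being fully faithful and essentially surjective: $h$ computes both sides, in that $\dim\Mor(r,s)$ equals the dimension of the fixed-vector space of $u^{\bar r\otimes s}$ as computed by $h$ on the quantum-group side, and generation of $\mathcal R$ by $\{f,\bar f\}$ gives essential surjectivity. Universality is immediate: any other model satisfies the defining relations of $\mathcal A$, hence factors through the universal $C(G)$.

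The main obstacle is precisely this last step: showing that the abstractly presented $\mathcal A$ does not collapse and that the morphism spaces of $\mathcal R$ map onto the intertwiner spaces of $R(G)$ with nothing extra appearing in the completion. Equivalently, the crux is producing a \emph{positive} (not merely invariant) Haar functional and proving its faithfulness; everything else is formal manipulation of the monoidal axioms. The positivity is exactly where one must use that $\mathcal R$ is a $W^*$-category together with the existence of the conjugates $\{f,\bar f\}$, which supply Frobenius reciprocity and hence the semisimplicity needed to decompose $\mathcal A$ and run the Peter--Weyl argument.
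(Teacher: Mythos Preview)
The paper does not prove this theorem at all: it is stated with a citation to Woronowicz's original article \cite{woronowicz1988tannaka} and used as a black box. So there is no ``paper's own proof'' to compare your proposal against.

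As for your sketch on its own merits: the outline is broadly correct and follows the standard strategy (present the $*$-algebra by the intertwiner relations, check the bialgebra structure descends, use the duality morphisms $t_j,\bar t_j$ for invertibility, then pass to a $C^*$-completion). You correctly isolate the genuine difficulty as the non-collapse of $\mathcal A$ together with the full faithfulness of $r\mapsto u^r$. However, what you have written remains a programme rather than a proof: the sentence ``I would construct the Haar functional $h$ \ldots\ and show $h$ is positive and faithful'' is precisely the content of the theorem, and you do not carry it out. Note also that Woronowicz's original argument does not proceed via an a priori Haar state; he establishes linear independence of the matrix coefficients (hence non-triviality of $\mathcal A$) directly from the concreteness and $W^*$-structure of the morphism spaces, and only afterwards obtains the Haar state from the resulting Peter--Weyl decomposition. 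Your route through an invariant state built by Ces\`aro averaging is a legitimate alternative (closer to modern treatments such as Neshveyev--Tuset), but the positivity step still requires the semisimplicity input you allude to, and that input is exactly what has to be extracted from the axioms of a concrete monoidal $W^*$-category.
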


Let us highlight two remarkable aspects of this theorem. Firstly, we obtain a compact matrix quantum group whenever we have a concrete monoidal $W^*$-category at hand. This is quite plausible in the case when our category is complete -- we define a quantum group simply by specifying all of its representations. But the second very nice thing is now, that we do not need to come up with \emph{all} representations but only with a rougher skeleton of it -- we only need a not necessarily complete category. This means, that the whole information about the quantum group is already given by a much smaller class of intertwiner spaces. This is a drastic reduction of complexity which will be needed in the sequel. In some sense, categories of partitions result from yet another reduction of complexity.

\subsection{A definition of unitary easy quantum groups using intertwiner spaces} \label{SectUnitaryEasy2}

Building on Woronowicz's Tannaka-Krein result, we may now give a more systematical definition of easy quantum groups using intertwiner spaces.  We very much follow the ideas of Banica and Speicher in the orthogonal case  \cite{banica2009liberation}. Let $p\in P^{\twocol}(k,l)$ be a two-colored partition. Using the numbers $\delta_p$ as defined in Definition \ref{DefDelta}, we define linear maps $T_p:(\C^n)^{\otimes k}\to(\C^n)^{\otimes l}$ by the following, where $(e_i)$ is the canonical orthonormal basis of $\C^n$.
\[T_p(e_{i_1}\otimes\ldots\otimes e_{i_k})
=\sum_{j_1,\ldots,j_l}\delta_p(i,j)e_{j_1}\otimes\ldots\otimes e_{j_l}\]
We use the convention that $(\C^n)^{\otimes 0}=\C$. Note that the coloring of the partition $p$ is not relevant for the definition of the map $T_p$. It only becomes relevant when $T_p$ is supposed to be an intertwiner. For this,  we need some more notation. For $r=(r_1,\ldots,r_k)\in\{\circ,\bullet\}^k$ we put $u^\circ:=u$, $u^\bullet:=\bar u$ and $u^r:=u^{r_1}\otimes\ldots\otimes u^{r_k}$.

\begin{defn}[easy QG -- intertwiner definition]\label{DefUnitaryEasy2}
A compact matrix quantum group $G$ with $S_n\subset G\subset U_n^+$ is called \emph{(unitary) easy}, if there is a category of partitions $\CC\subset P^{\twocol}$ such that for every $r\in\{\circ,\bullet\}^k$, every $s\in\{\circ,\bullet\}^l$, and all $k,l\in\N_0$, the space of intertwiners $\Mor(u^r,u^s)$ is spanned by all linear maps $T_p$ where $p$ is in $\CC(k,l)$ and has upper coloring according to $r$ and lower coloring according to  $s$. 

An easy quantum group $G$ is called an \emph{orthogonal easy quantum group}, if $G\subset O_n^+$.
\end{defn}

Note that if $r$ and $s$  consist only of white points, we have:
\[\Mor(u^r,u^s)=\Mor(u^{\otimes k},u^{\otimes l})=\spanlin\{T_p\;|\; p\in \CC(k,l), \textnormal{ all points of $p$ are white}\}\]

The above definition of an orthogonal easy quantum group is equivalent to the one given by Banica and Speicher \cite{banica2009liberation}. Indeed, let $G$ be an easy quantum group with $S_n\subset G\subset O_n^+$ as defined by Banica and Speicher. Hence, there is a category of non-colored partitions $\CC\subset P$ such that for all $k,l\in\N_0$:
\[\Mor(u^{\otimes k},u^{\otimes l})=\spanlin\{T_p\;|\; p\in \CC(k,l)\}\]
By Section \ref{SectOnecolored}, $\Psi^{-1}(\CC)\subset P^{\twocol}$ is a category of two-colored partitions containing $\paarpartww$. Now, $G$ is an easy quantum group in the sense of Definition \ref{DefUnitaryEasy2}, since the relations $R(\paarpartww)$ exactly give $u=\bar u$, see Section \ref{SectCStar}.
This shows that, our definition of unitary easy quantum groups is straightforward once the definition of orthogonal easy quantum groups was given by Banica and Speicher. Furthermore, the idea of unitary easy quantum groups was somehow around in the literature, see for instance \cite[Sect. 12]{banica2007hyperoctahedral}. Finally, we gratefully acknowledge the access to an unpublished draft by Banica, Curran and Speicher \cite{speicherunpublished} dating from 2012, in which the theory of unitary easy quantum groups was already developed. However, up to now, there was no honest definition of unitary easy quantum groups available in the literature.

The link between the $C^*$-algebraic relations and the intertwiner spaces is the following.

\begin{lem}\label{LemTpRp}
 Let $p\in P^{\twocol}(k,l)$ with upper color pattern $r\in\{\circ,\bullet\}^k$ and lower color pattern $s\in\{\circ,\bullet\}^l$, and let $n\in\N$. Let $A$ be a $C^*$-algebra generated by elements $u_{ij}$, $1\leq i,j\leq n$. Then, the generators $u_{ij}$ fulfill the relations $R(p)$ if and only if $T_pu^r=u^s T_p$ for $u^r$ and $u^s$ as defined above.
\end{lem}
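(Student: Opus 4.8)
The plan is to read the intertwiner equation $T_pu^r=u^sT_p$ as an identity of matrices with entries in $A$ and to extract its entries, which turn out to be precisely the defining equations of $R(p)$. Throughout, write $u^\circ_{ij}=u_{ij}$ and $u^\bullet_{ij}=u_{ij}^*$, view $u^r=u^{r_1}\otimes\cdots\otimes u^{r_k}$ as the matrix whose entry in row $I=(i_1,\ldots,i_k)$ and column $J=(j_1,\ldots,j_k)$ (with $I,J\in\{1,\ldots,n\}^k$) is $u^{r_1}_{i_1j_1}\cdots u^{r_k}_{i_kj_k}$, and similarly for $u^s$; regard $T_p$ as the matrix with entry $\delta_p(I,J)$ in row $J\in\{1,\ldots,n\}^l$ and column $I\in\{1,\ldots,n\}^k$. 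Then $T_pu^r=u^sT_p$ is exactly the statement that two matrices of the same size with entries in $A$ are equal, i.e.\ that all of their entries agree.

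First I would compute the two sides entrywise. For fixed $\alpha\in\{1,\ldots,n\}^k$ and $j\in\{1,\ldots,n\}^l$, the rules of matrix multiplication give
\[(T_pu^r)_{j,\alpha}=\sum_{i_1,\ldots,i_k=1}^n\delta_p(i,j)\,u^{r_1}_{i_1\alpha_1}\cdots u^{r_k}_{i_k\alpha_k}\qquad\text{and}\qquad (u^sT_p)_{j,\alpha}=\sum_{\beta_1,\ldots,\beta_l=1}^n\delta_p(\alpha,\beta)\,u^{s_1}_{j_1\beta_1}\cdots u^{s_l}_{j_l\beta_l}.\]
Hence $T_pu^r=u^sT_p$ holds if and only if these two expressions coincide for all $\alpha$ and all $j$. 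Comparing this with Definition \ref{DefRel}, the family of equations just obtained is literally the family $R(p)$, once one performs the harmless relabeling of bound and free indices: swap the upper indices $i\leftrightarrow\alpha$ and rename the lower ones by $\beta\to j$, $\gamma\to\beta$. Since the equivalence was established in both directions at once, this proves the lemma. The degenerate cases $k=0$ or $l=0$ are covered by the convention $(\C^n)^{\otimes0}=\C$ and reproduce the conventions for the empty side of $R(p)$ recorded just after Definition \ref{DefRel}.

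I do not expect a genuine obstacle: the statement is a straight dictionary between the representation-theoretic and the relational descriptions, and the content of the proof is the bookkeeping that identifies the two index patterns correctly. The one thing requiring attention is tracking which argument of $\delta_p$ records the upper points and which the lower ones, and correspondingly which index of $u^r$ and $u^s$ is summed over, so that the entrywise identity is matched with $R(p)$ in the right orientation rather than with a rotated or verticolor-reflected relative of it.
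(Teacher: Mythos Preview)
Your proof is correct and follows essentially the same approach as the paper. The paper applies $T_pu^r$ and $u^sT_p$ to basis vectors $e_{i_1}\otimes\cdots\otimes e_{i_k}\otimes 1$ and compares coefficients of $e_{\beta_1}\otimes\cdots\otimes e_{\beta_l}$, which is exactly your matrix-entry computation in different notation; your index relabeling $i\leftrightarrow\alpha$, $\beta\to j$, $\gamma\to\beta$ is the correct bookkeeping to match Definition~\ref{DefRel}.
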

\begin{proof}
Applying $u^r=\sum_{i,j} e_{j_1i_1}\otimes \ldots\otimes e_{j_ki_k}\otimes u_{j_1i_1}^{r_1}\ldots u_{j_ki_k}^{r_k}\in M_n(\C)^{\otimes k}\otimes A$ to a vector $e_{i_1}\otimes\ldots\otimes e_{i_k}\otimes 1$ amounts to:
\[u^r \left(e_{i_1}\otimes\ldots\otimes e_{i_k}\otimes 1\right)=\sum_{\alpha_1,\ldots,\alpha_k} e_{\alpha_1}\otimes\ldots\otimes e_{\alpha_k}\otimes u_{\alpha_1 i_1}^{r_1}\ldots u_{\alpha_ki_k}^{r_k}\]
Thus:
\[T_pu^r\left(e_{i_1}\otimes\ldots\otimes e_{i_k}\otimes 1\right)=\sum_{\alpha,\beta}\delta_p(\alpha,\beta)e_{\beta_1}\otimes\ldots\otimes e_{\beta_l}\otimes u_{\alpha_1 i_1}^{r_1}\ldots u_{\alpha_ki_k}^{r_k}\]
Whereas:
\[u^sT_p\left(e_{i_1}\otimes\ldots\otimes e_{i_k}\otimes 1\right)=\sum_{\gamma,\beta}\delta_p(i,\gamma)e_{\beta_1}\otimes\ldots\otimes e_{\beta_l}\otimes u_{\beta_1 \gamma_1}^{s_1}\ldots u_{\beta_l\gamma_l}^{s_l}\]
Comparison of the coefficients yields the claim.
\end{proof}

\subsection{From categories of partitions to easy quantum groups via Tannaka-Krein}

By Definition \ref{DefUnitaryEasy2}, an  easy quantum group is a quantum group, whose intertwiner spaces are indexed by elements from a category of partitions. But can we always assign a quantum group to a category of partitions? We will now show how to construct a concrete monoidal $W^*$-category given a category of partitions. We then  apply Woronowicz's Tannaka-Krein result and we are done. This yields a third definition of easy quantum groups.

The first step is to show -- like in \cite{banica2009liberation} -- that operations on the maps $T_p$ match nicely with operations on partitions. 
To do this for the composition of two partitions $q\in P^{\twocol}(k,l)$ and $p\in P^{\twocol}(l,m)$, we need some more notation. Recall from Section \ref{SectOperations} that the $l$ middle points are removed when forming the composition $pq\in P^{\twocol}(k,m)$. But before doing so, we restrict the partition $p$ to its $l$ upper, and $q$ to its $l$ lower points. We then form the maximum of these two restricted partitions $p_0$ and $q_0$, i.e. we consider the minimal partition $r$ such that every block of $p_0$ is contained in a block of $r$, and likewise for $q_0$. Now let $\textnormal{rl}(q,p)$ be the number of those blocks of $r$ containing only points which are neither connected to one of the $k$ upper points of $q$ nor to one of the $m$ lower points of $p$.

\begin{lem}\label{LemPropTp}
The assignment $p\mapsto T_p$ respects the category operations for partitions:
\begin{itemize}
\item[(a)] $T_p\otimes T_q=T_{p\otimes q}$
\item[(b)] $T_qT_p=n^{\textnormal{rl}(q,p)}T_{qp}$ 
\item[(c)] $(T_p)^*=T_{p^*}$
\end{itemize}
\end{lem}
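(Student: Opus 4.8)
The plan is to verify each of the three identities by a direct computation on basis vectors of the appropriate tensor powers of $\C^n$, exactly as in the orthogonal case treated in \cite{banica2009liberation}. The only genuinely new feature here is the bookkeeping of colors, but since the definition of $T_p$ ignores the coloring of $p$ (only the numbers $\delta_p$ enter), the colors play no role at all in this lemma; they will only matter later when $T_p$ is required to be an intertwiner. So throughout I would fix $p\in P^{\twocol}(k,l)$, $q\in P^{\twocol}(k',l')$ and write $T_p,T_q$ for the associated maps, suppressing colors.

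\emph{Part (c).} This is the quickest: by definition $\langle T_p(e_{i_1}\otimes\cdots\otimes e_{i_k}),\, e_{j_1}\otimes\cdots\otimes e_{j_l}\rangle=\delta_p(i,j)$, and $\delta_p(i,j)=\delta_{p^*}(j,i)$ since reflecting a partition at the horizontal axis exchanges the roles of upper and lower points without changing the block structure. Hence the matrix of $T_{p^*}$ is the transpose of that of $T_p$, i.e. $(T_p)^*=T_{p^*}$ (all $\delta_p$ are real, so adjoint equals transpose). \emph{Part (a).} For $p\otimes q$ one uses that the block structure of the horizontal concatenation is the disjoint union of the blocks of $p$ and of $q$, so that for multi-indices $(i,i')$ on the $k+k'$ upper points and $(j,j')$ on the $l+l'$ lower points one has $\delta_{p\otimes q}\big((i,i'),(j,j')\big)=\delta_p(i,j)\,\delta_q(i',j')$. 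Plugging this into the defining formula for $T_{p\otimes q}$ and comparing with $(T_p\otimes T_q)(e_i\otimes e_{i'})=T_p(e_i)\otimes T_q(e_{i'})$ gives the identity immediately.

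\emph{Part (b).} This is the main step and the one requiring care, because of the power-of-$n$ factor coming from closed loops. Here $q\in P^{\twocol}(k,l)$, $p\in P^{\twocol}(l,m)$, and one computes, for upper multi-index $i=(i_1,\dots,i_k)$ and lower multi-index $j=(j_1,\dots,j_m)$,
\[
\langle T_qT_p(e_{i_1}\otimes\cdots\otimes e_{i_k}),\, e_{j_1}\otimes\cdots\otimes e_{j_m}\rangle
=\sum_{\gamma_1,\dots,\gamma_l=1}^{n}\delta_p(i,\gamma)\,\delta_q(\gamma,j).
\]
The task is to evaluate this sum. One argues combinatorially: fixing $i$ and $j$, the summand is nonzero only for those $\gamma$ that are constant on every block of the ``glued'' partition $r=\max(p_0,q_0)$ (notation as in the paragraph preceding the lemma) and, on blocks of $r$ meeting the top row of $q$ or the bottom row of $p$, the value is forced to be the corresponding $i$- or $j$-entry (and the sum is zero unless those forced values are consistent, which is precisely the condition $\delta_{qp}(i,j)=1$). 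The remaining freedom is one free index in $\{1,\dots,n\}$ for each block of $r$ that touches neither the top of $q$ nor the bottom of $p$ — there are exactly $\mathrm{rl}(q,p)$ such blocks — so the sum equals $n^{\mathrm{rl}(q,p)}\,\delta_{qp}(i,j)$, which is the matrix coefficient of $n^{\mathrm{rl}(q,p)}T_{qp}$. The colors cause no trouble: composition of two-colored partitions requires matching colors on the $l$ middle points, but once that matching holds the underlying set partition of $qp$ is exactly the one obtained by the gluing just described, so the computation is identical to the uncolored one.

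The one point to state carefully is the characterization of the support of $\gamma\mapsto\delta_p(i,\gamma)\delta_q(\gamma,j)$ in terms of the blocks of $r$; I would phrase this as: a choice of $\gamma$ contributes iff $\gamma$ is constant on each block of $r$, and in that case it contributes $1$, so the sum counts the number of such $\gamma$'s compatible with the forced boundary values. This is the heart of Lemma \ref{LemPropTp}(b) and the only place where a small combinatorial argument (rather than a one-line identity) is needed.
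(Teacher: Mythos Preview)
Your proposal is correct and is precisely the approach the paper takes: the paper's own proof is a one-line reference to \cite[Prop.~1.9]{banica2009liberation} together with the observation that $T_p$ does not see the coloring of $p$, and what you have written is exactly that Banica--Speicher computation spelled out in full. One small bookkeeping slip in your part (b): having declared $q\in P^{\twocol}(k,l)$ and $p\in P^{\twocol}(l,m)$, the composite $T_qT_p$ cannot be applied to a vector in $(\C^n)^{\otimes k}$, and your displayed formula $\sum_\gamma\delta_p(i,\gamma)\delta_q(\gamma,j)$ actually corresponds to the opposite assignment $p\in P^{\twocol}(k,l)$, $q\in P^{\twocol}(l,m)$; once the roles are set consistently the argument goes through unchanged.
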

\begin{proof}
The proof is word by word the same  as in \cite[Prop 1.9]{banica2009liberation} (since $T_p$ does not see the coloring of $p$).
\end{proof}
In fact, it is not important to understand the precise number $n^{\textnormal{rl}(q,p)}$ -- the only thing that matters is that the span of all $T_p$ is closed under composition of maps.

We are now ready to apply Woronowicz's theorem.

\begin{prop}\label{PropTK}
Let $\CC\subset P^{\twocol}$ be a category of partitions. We may associate a concrete monoidal $W^*$-category with generating set $\{f,\bar f\}$ to it, and hence a compact matrix quantum group $G$ of Kac type (in the sense that $u^f=u=(u_{ij})$ and $u^{\bar f}=\bar u=(u_{ij}^*)$). The $C^*$-algebra $C(G)$ is the universal $C^*$-algebra generated by the relations $R(p)$ with $p\in\CC$, thus it is  easy in the sense of Definition \ref{DefUnitaryEasy}. 
\end{prop}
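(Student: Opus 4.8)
The plan is to build the concrete monoidal $W^*$-category from the category of partitions $\CC$ by taking, for each word $r=(r_1,\dots,r_k)\in\{\circ,\bullet\}^k$, the object index $r$ with associated Hilbert space $H_r:=(\C^n)^{\otimes k}$, binary operation the concatenation of words, and morphism spaces
\[
\Mor(r,s):=\spanlin\{T_p\mid p\in\CC(k,l),\ p\text{ has upper coloring }r,\ \text{lower coloring }s\}.
\]
First I would check the axioms (i)--(vii) of a concrete monoidal $W^*$-category one by one. Closure under tensor product is Lemma \ref{LemPropTp}(a) together with the fact that $\CC$ is closed under $\otimes$, and $H_{rs}=H_r\otimes H_s$ holds by definition. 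Closure under composition is Lemma \ref{LemPropTp}(b) (the scalar $n^{\rl(q,p)}$ is irrelevant, only that the span is preserved) together with closure of $\CC$ under composition; here one must note the coloring-matching condition on composable partitions exactly mirrors the requirement $s_1=r_2$ for composable morphisms. Closure under involution is Lemma \ref{LemPropTp}(c) plus closure of $\CC$ under $*$. For axiom (iv), $\id_r=T_p$ where $p=\idpartww^{\otimes\cdots}\otimes\cdots$ is the identity partition with coloring $r$, built by tensoring $\idpartww$ and $\idpartbb$; these lie in $\CC$ by definition. Axioms (v), (vi), (vii) are purely formal statements about words (concatenation is associative with unit the empty word, $H_\emptyset=\C$), and (v) holds because distinct colorings of the same length give the same underlying space but we have declared objects to be the words themselves, so $\id\in\Mor(r,s)$ forces $r,s$ to have the same length and then the identity partition with those two colorings lies in $\CC$ only if... actually (v) needs a small argument: one shows $\id_{H_r}\in\Mor(r,s)$ implies $r=s$ because $T_{\text{id-partition}}$ with mismatched top/bottom colorings is still the identity matrix, but the point of (v) in Woronowicz's setup is met by taking $R$ to be the set of colored words and observing that the functor to Hilbert spaces need not be faithful on objects — I would follow the orthogonal-case treatment in \cite{banica2009liberation} here.

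Next I would verify that $\{f,\bar f\}$ with $f:=(\circ)$ (the one-letter word, white) generates the category in Woronowicz's sense, i.e. that for every word $s$ there are morphisms $b_k\in\Mor(r_k,s)$ with $\sum b_kb_k^*=\id_s$ and each $r_k$ a product of copies of $f$ and $\bar f$. Here one can simply take the single morphism $b=\id_s=T_{\text{id-partition with coloring }s}$ and $r=s$, which is already such a product; so this condition is immediate. One must also check that $\bar f$ is genuinely the conjugate of $f$: the maps $t_j$ and $\bar t_j$ from the excerpt are, up to the canonical identification, $T_{\paarpartwb}$ and $T_{\baarpartwb}$ (or their rotations), which lie in the category because $\paarpartwb,\paarpartbw\in\CC$ by the definition of a category of partitions, and the antilinear map $j:\C^n\to\C^n$ is complex conjugation in the standard basis, giving the Kac-type pairing $u^{\bar f}=\bar u=(u_{ij}^*)$.

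Then Woronowicz's Theorem \ref{ThmWorTK}(b) applies and produces a compact matrix quantum group $(G,u)$ with $u^f=u$, $u^{\bar f}=\bar u$, and $R(G)$ a model of our category; in particular $T_p u^r=u^s T_p$ for all $p\in\CC$ with the appropriate colorings. By Lemma \ref{LemTpRp}, this is exactly the statement that the generators $u_{ij}$ of $C(G)$ satisfy all the relations $R(p)$, $p\in\CC$. It remains to identify $C(G)$ with the \emph{universal} $C^*$-algebra $A$ for these relations: there is a surjection $A\twoheadrightarrow C(G)$ since $C(G)$ satisfies the defining relations of $A$, and conversely the universal algebra $A$ together with the comultiplication $\Delta(u_{ij})=\sum_k u_{ik}\otimes u_{kj}$ is a compact matrix quantum group (as noted in Remark \ref{RemRelations}, $\Delta$ is well-defined because the $u_{ij}'=\sum_k u_{ik}\otimes u_{kj}$ satisfy the same relations, and $u,\bar u$ are unitary hence invertible), so it provides a model of the same category; by the universality clause in Theorem \ref{ThmWorTK}(b), $G$ is the universal such quantum group, forcing $C(G)=A$. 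This also shows $S_n\subset G\subset U_n^+$, since $\CC\supseteq\langle\idpartww,\paarpartwb\rangle$ gives $G\subset U_n^+$ and the relations are all satisfied by the coordinate functions on $S_n$.

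\textbf{The main obstacle} I expect is the bookkeeping around axiom (v) and, more importantly, making the passage from a not-necessarily-closed category of maps $\spanlin\{T_p\}$ to Woronowicz's framework fully rigorous: one has to be careful that the morphism spaces are genuine subspaces of $\mathrm{Hom}(H_r,H_s)$ (they are, being spans of explicit matrices), and that the generation condition and conjugate-object structure are set up so that Theorem \ref{ThmWorTK}(b) literally applies rather than merely morally. The rest is a routine transcription of Banica--Speicher's orthogonal argument, with the coloring of points tracked throughout, so I would state the verifications of (a)--(c) and (i)--(vii) briskly and spend the bulk of the written proof on the identification of $C(G)$ with the universal $C^*$-algebra via the universality in Tannaka--Krein.
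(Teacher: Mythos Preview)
Your approach is essentially identical to the paper's: same objects, morphism spaces, generating pair, conjugate structure via $T_{\paarpartwb}$ and $T_{\baarpartbw}$, generation via $b_1=\id_s$, and the same two-step identification of $C(G)$ with the universal $C^*$-algebra through Lemma~\ref{LemTpRp} and the universality clause of Theorem~\ref{ThmWorTK}(b).

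The one genuine gap is exactly where you yourself hesitate: axiom (v). Taking $R$ to be the set of colored words outright does \emph{not} work, and the orthogonal treatment in \cite{banica2009liberation} will not help you, because there the objects are natural numbers and the issue simply does not arise. In the colored setting, whenever $\CC$ contains an identity-type partition with mismatched colorings (for instance $\idpartwb$, which lies in every category coming from the orthogonal world via Section~\ref{SectOnecolored}), the map $T_p$ is literally the identity on $(\C^n)^{\otimes k}$, so $\id\in\Mor(r,s)$ with $r\neq s$ as words and (v) fails. The paper's fix is to pass from the free monoid $R_0$ of colored words to the quotient $R$ obtained by identifying $r$ and $s$ whenever $|r|=|s|$ and the tensor product of $\idpartww,\idpartwb,\idpartbw,\idpartbb$ with upper coloring $r$ and lower coloring $s$ lies in $\CC$. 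This forces (v) by construction, and one checks that the binary operation (concatenation) and the morphism spaces descend to the quotient. Once you insert this quotient step, your argument is complete and matches the paper's.
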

\begin{proof}
This is how we associate a concrete monoidal $W^*$-category $\mathcal R_{\CC}$ to a category $\CC$ of partitions:
\begin{itemize}
\item We let $R_0$ be the set of all words $r=r_1\ldots r_k\in\{\circ,\bullet\}^k$, for all $k\in\N_0$. Denote by $|r|:=k$ the length of the word. We also consider the empty word $e$ with $|e|=0$. We let $R$ be the set of words in $R_0$ with the identification of  two words $r$ and $s$ whenever $|r|=|s|$ and the tensor product of $\idpartww$, $\idpartwb$, $\idpartbw$, and $\idpartbb$ with upper color pattern $r$ and lower color pattern $s$ is in $\CC$.
We define a binary operation $\cdot:R\times R\to R$ by concatenation of words.
\item For $r\in R$, put $H_r:= (\C^n)^{\otimes|r|}$. Moreover $H_e:=\C$.
\item Let $\Mor(r,s)$ be the linear span of all maps $T_p$ where $p\in \CC(|r|,|s|)$ such that the upper points of $p$ are colored according to $r$ and the lower points according to $s$.
\item Put $f:=\circ$ and $\bar f:=\bullet$.
\end{itemize} 

Using Lemma \ref{LemPropTp}, conditions (i), (ii) and (iii) of a concrete monoidal $W^*$-category are readily verified. The map for (iv) is given by a suitable tensor product of the identity partitions $\idpartww$ and $\idpartbb$, and (v) is fulfilled since we passed from $R_0$ to $R$. Finally, the concatenation of words is associative, thus (vi) is satisfied und as for (vii), we use the empty word. 
Moreover, $f=\circ$ and $\bar f=\bullet$ are conjugate with the invertible antilinear mapping $j:\C^n\to\C^n$ given by $\sum \alpha_i e_i\mapsto \sum\bar\alpha_i e_i$. Then $t_j=T_{\paarpartwb}\in \Mor(1,f\bar f)$ and $\bar t_j=T_{\baarpartbw}\in \Mor(\bar ff,1)$. Furthermore, for any $s\in R$, we let $b_1\in \Mor(s,s)$ be the identity map, proving that $\{f,\bar f\}$ generates the $W^*$-category. Hence, $\mathcal R_{\CC}$ is a concrete monoidal $W^*$-category and we obtain a quantum group $G$ due to Theorem \ref{ThmWorTK}.

The $C^*$-algebra $C(G)$  fulfills the relations $R(p)$ for all $p\in\CC$ by Lemma \ref{LemTpRp}. On the other hand, the universal $C^*$-algebra generated by all relations $R(p)$, $p\in\CC$ gives rise to a model for $\mathcal R_{\CC}$. Hence, it coincides with $C(G)$. 
\end{proof}

In this sense, the theory of easy quantum groups arises as a very direct application of Woronowicz's Tannaka-Krein result and categories of partitions are yet another reduction of complexity compared to concrete monoidal $W^*$-categories (see the discussion after Theorem \ref{ThmWorTK}). Implicitely first roots of such a diagrammatic approach to these $W^*$-categories may be found in Woronowicz's construction of the twisted $SU(N)$ groups \cite[Proof of Thm. 1.4]{woronowicz1988tannaka}.

\begin{cor}
If $G$ is easy in the sense of Definition \ref{DefUnitaryEasy}, it is easy in the sense of Definition \ref{DefUnitaryEasy2}.
\end{cor}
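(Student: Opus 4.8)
The plan is to identify $G$ with the quantum group produced by Proposition \ref{PropTK} from the category generated by $\CC_0$, and then to read off the intertwiner description directly from that construction. First I would put $\CC:=\langle\CC_0\rangle\subset P^{\twocol}$. By Remark \ref{RemRelations}, the universal $C^*$-algebra generated by the $u_{ij}$ with $u,\bar u$ unitary and the relations $R(p)$, $p\in\CC_0$, agrees with the universal $C^*$-algebra $B$ generated by the $u_{ij}$ and the relations $R(p)$ for \emph{all} $p\in\CC$; here the unitarity relations are themselves among the $R(p)$, $p\in\CC$, since a category contains the pair and identity partitions (see Section \ref{SectCStar}). Hence $C(G)\cong B$ identifying generators. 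On the other hand, Proposition \ref{PropTK} applied to $\CC$ yields a compact matrix quantum group $G'$ of Kac type, built from a concrete monoidal $W^*$-category $\mathcal R_{\CC}$, whose $C^*$-algebra $C(G')$ is exactly this same universal $C^*$-algebra $B$. So the identity on generators extends to a $*$-isomorphism $C(G)\to C(G')$, i.e. $G=G'$, with $u^G=u=(u_{ij})$ and $\bar u^G=(u_{ij}^*)$.

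Next I would extract the intertwiner spaces from $\mathcal R_{\CC}$ via Theorem \ref{ThmWorTK}(b). That theorem gives that $R(G')$ is a model of $\mathcal R_{\CC}$: for each word $r\in\{\circ,\bullet\}^k$ there is a representation $u^r=u^{r_1}\otimes\ldots\otimes u^{r_k}\in R(G')$ (using $u^f=u$ and $u^{rs}=u^r\otimes u^s$, so that $u^\circ=u$ and $u^\bullet=\bar u$), and $Tu^r=u^sT$ for every $T\in\Mor_{\mathcal R_{\CC}}(r,s)$; and by construction of $\mathcal R_{\CC}$ in Proposition \ref{PropTK} one has $\Mor_{\mathcal R_{\CC}}(r,s)=\spanlin\{T_p\mid p\in\CC(k,l)\textnormal{ with upper coloring }r\textnormal{ and lower coloring }s\}$. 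This already yields $\spanlin\{T_p\}\subset\Mor(u^r,u^s)$. For the reverse inclusion one invokes that the completion of $\mathcal R_{\CC}$ coincides with $R(G')$ and that Woronowicz's completion procedure adds new objects but does not enlarge the morphism spaces between objects already present in $\mathcal R_{\CC}$; hence $\Mor_{R(G')}(u^r,u^s)=\Mor_{\mathcal R_{\CC}}(r,s)=\spanlin\{T_p\}$, which is precisely the intertwiner condition of Definition \ref{DefUnitaryEasy2} for the category $\CC$.

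It then remains to check the sandwich $S_n\subset G\subset U_n^+$. The inclusion $G\subset U_n^+$ holds because $C(G)$ is a quotient of $C(U_n^+)$ (the unitarity relations are imposed). For $S_n\subset G$, observe that the permutation representation of $S_n$ on $\C^n$ has every $T_p$, $p\in P^{\twocol}$, as an intertwiner, so by Lemma \ref{LemTpRp} the coordinate functions $u_{ij}$ on $S_n$ satisfy $R(p)$ for all $p\in\CC$; the universal property of $C(G)$ then gives a $*$-homomorphism $C(G)\twoheadrightarrow C(S_n)$ sending $u_{ij}\mapsto u_{ij}$, i.e. $S_n\subset G$. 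Combining this with the previous paragraph shows that $G$ is easy in the sense of Definition \ref{DefUnitaryEasy2}, witnessed by $\CC$.

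The only step I expect to be non-formal is the equality $\Mor_{R(G')}(u^r,u^s)=\spanlin\{T_p\}$: the inclusion $\supseteq$ is built into the model property of Theorem \ref{ThmWorTK}(b), but $\subseteq$ rests on the fact --- which is part of the substance of Woronowicz's Tannaka--Krein theorem --- that completing a concrete monoidal $W^*$-category creates no new morphisms between its original objects. Everything else is bookkeeping with the two universal properties and with the functoriality $p\mapsto T_p$ of Lemma \ref{LemPropTp}.
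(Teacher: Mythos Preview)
Your proof is correct and follows essentially the same route as the paper's: pass from $\CC_0$ to the category $\CC=\langle\CC_0\rangle$ via Remark \ref{RemRelations}, invoke Proposition \ref{PropTK} to identify $C(G)$ with the universal $C^*$-algebra coming from $\mathcal R_{\CC}$, and then verify the sandwich $S_n\subset G\subset U_n^+$. The paper's argument for $S_n\subset G$ is phrased slightly differently --- it observes that $P^{\twocol}$ is generated by a finite list of partitions, so $C(S_n)$ is a quotient of $C(G)$ --- but this is the same content as your check that the coordinate functions on $S_n$ satisfy every $R(p)$; and where the paper simply asserts that the intertwiners of $G$ are the span of the $T_p$, you have been more careful in isolating the one substantive input from Tannaka--Krein (that completion does not enlarge morphism spaces between objects already in $\mathcal R_{\CC}$).
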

\begin{proof}
Let $\CC_0\subset P^{\twocol}$ be a set and let $C(G)$ be the universal $C^*$-algebra generated by elements $u_{ij}$ with $u$ and $\bar u$ unitary and the relations $R(p)$, $p\in\CC_0$. By Remark  \ref{RemRelations}, we may assume that $\CC_0$ is a category of partitions and by Proposition \ref{PropTK} we obtain a compact matrix quantum group $H$ such that $C(H)=C(G)$. Hence $G=H$ and its intertwiners are given by the span of all $T_p$, $p\in\CC_0$. Moreover, as $u$ and $\bar u$ are unitaries, we have $G\subset U_n^+$. Finally, $P^{\twocol}$ is the category generated by $\{\idpartwb,\singletonw,\downsingletonw,\vierpartrotwwww,\crosspartwwww\}$ (see \cite{weber2013classification}) and thus we have a surjection of $C(G)$ to $C(S_n)$ mapping generators to generators. This proves $S_n\subset G$ and we are done.
\end{proof}

Asking for the converse, we have to mention one subtlety. Given a $*$-algebra $A$, there may exist several $C^*$-completions of it. A compact matrix quantum group $G$ is in its \emph{full version}, if $C(G)$ is the universal enveloping $C^*$-algebra $C_f(G)$ of the $*$-algebra $\textnormal{Pol}(G)$ generated by polynomials in the elements $u_{ij}$ for $i,j\in\{1,\ldots,n\}$. We also have a \emph{reduced version} of $G$ given by the $C^*$-algebra $C_r(G)$ obtained from the GNS construction with the Haar state on $C_f(G)$ (see \cite{neshveyev2013compact} for details). This is yet another completion of $\textnormal{Pol}(G)$. A quantum group in its full version, in its reduced version or in its algebraic version (taking $\textnormal{Pol}(G)$ as the algebra) -- from an algebraic point of view, these are all \emph{different} objects since the underlying algebras are different. But on a more abstract level, it is always the \emph{same} quantum group -- in different disguises, but with the same ``quantum group features'' like the representation theory or the intertwiner spaces.

Now, Definition \ref{DefUnitaryEasy} and Woronowicz's Tannaka-Krein approach only cover quantum groups in their full versions while Definition \ref{DefUnitaryEasy2} appears to be more general. However, any easy quantum group in the sense of Definition \ref{DefUnitaryEasy2} will be easy in the sense of Definition \ref{DefUnitaryEasy} once put into its full version, i.e. once its algebra $\textnormal{Pol}(G)$ is completed in a maximal way. We conclude that all three approaches to a definition of easy quantum groups yield the same objects linking combinatorics with quantum algebra in the following nice way.

\begin{cor}
Categories of partitions are in one-to-one correspondence with easy quantum groups. More precisely, to a category $\CC$, we may assign an easy quantum group $G_n$ for every $n\in\N$, as seen in  Proposition \ref{PropTK}. Now, the correspondence 
\[\{\CC\subset P^{\twocol} \textnormal{ categ. of part.}\} \quad\longleftrightarrow\quad \{(G_n)_{n\in\N} \textnormal{ easy q.g. with same categ. of part.}\}\]
 is one-to-one.
\end{cor}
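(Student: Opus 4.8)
The plan is to establish the correspondence by showing both directions are well-defined and mutually inverse. First I would fix the setup: by Proposition \ref{PropTK}, any category $\CC\subset P^{\twocol}$ yields, for each $n\in\N$, an easy quantum group $G_n$ whose intertwiner spaces $\Mor(u^r,u^s)$ are spanned by the maps $T_p$ for $p\in\CC$ with matching colorings. This gives a map $\CC\mapsto (G_n)_{n\in\N}$. In the reverse direction, an easy quantum group (more precisely, a compatible family $(G_n)$ with the \emph{same} category of partitions, as in Definition \ref{DefUnitaryEasy2}) determines a category $\CC$; the content of the corollary is that the composite in both orders is the identity, which amounts to saying that $\CC$ can be recovered from the spaces $\spanlin\{T_p\mid p\in\CC(k,l)\}$ for all $n$, i.e.\ that no information is lost in passing $\CC\leadsto(T_p)\leadsto(G_n)$.

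The one nontrivial direction is injectivity: if $\CC$ and $\DD$ are categories of partitions with $\spanlin\{T_p\mid p\in\CC(k,l),\ \text{coloring } (r,s)\} = \spanlin\{T_p\mid p\in\DD(k,l),\ \text{coloring } (r,s)\}$ for all $k,l$ and all colorings, and for all $n$, then $\CC=\DD$. The key step here is a linear independence statement: for $n$ sufficiently large (in fact $n\ge k+l$, or even $n\ge\min(k,l)$ by the usual argument), the maps $\{T_p\mid p\in P^{\twocol}(k,l)\}$ with a fixed coloring are linearly independent in $\Hom((\C^n)^{\otimes k},(\C^n)^{\otimes l})$ — because $T_p$ does not see the coloring, this is exactly the classical fact for non-colored partitions, proved by evaluating on suitable basis vectors indexed by set partitions with enough distinct values. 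Granting this, if $p\in\CC(k,l)$ then $T_p$ lies in the span of $\{T_q\mid q\in\DD(k,l)\}$ with the same coloring as $p$; by linear independence (taking $n$ large enough for the pair $(k,l)$) we get $p\in\DD(k,l)$, and symmetrically, so $\CC=\DD$. I would then note that the family $(G_n)_{n\in\N}$ remembers the $T_p$ spans for all $n$ by Definition \ref{DefUnitaryEasy2}, so the assignment $(G_n)\mapsto\CC$ is well-defined on the right-hand side and is a two-sided inverse of $\CC\mapsto(G_n)$.

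For the other composite, starting from a family $(G_n)$ of easy quantum groups sharing a category $\CC$: by definition this $\CC$ is the one whose partitions label the intertwiner spaces, and applying Proposition \ref{PropTK} to $\CC$ reproduces exactly these $G_n$ (the $C^*$-algebra is the universal one for the relations $R(p)$, $p\in\CC$, which by Lemma \ref{LemTpRp} are equivalent to the intertwiner conditions $T_pu^r=u^sT_p$, and by Woronowicz's Tannaka--Krein Theorem \ref{ThmWorTK} the quantum group is uniquely determined by its complete category of intertwiners, of which $\CC$ is a generating skeleton). So this composite is also the identity.

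The main obstacle is the linear independence of the $T_p$ for large $n$; everything else is bookkeeping built on Proposition \ref{PropTK} and Tannaka--Krein. This is where one must be slightly careful: $T_p$ ignores colors, so two partitions differing only in coloring give the \emph{same} map and cannot be separated by intertwiner spaces alone — but they live in $\Mor(u^r,u^s)$ for different $(r,s)$, so the indexing by color pattern in Definition \ref{DefUnitaryEasy2} is precisely what rescues injectivity, and this point deserves an explicit remark in the proof. Once the coloring is fixed, $T_p=T_q$ forces (the underlying non-colored) $p$ and $q$ to have the same coloring and the classical argument (choosing a labeling of the points realizing the block structure of $p$ by a tuple with the right collisions, for $n$ at least the number of blocks) finishes it.
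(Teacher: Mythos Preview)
Your proof is correct and follows essentially the same line as the paper: surjectivity by definition, injectivity via linear independence of the $T_p$ for $n\ge k+l$, concluding that $T_p$ in the span of $\{T_q:q\in\DD(k,l)\}$ forces $p\in\DD$. Your treatment is in fact slightly more careful than the paper's own argument on one point: you explicitly flag that $T_p$ is blind to colors, so linear independence only holds once the coloring $(r,s)$ is fixed, and it is the indexing of the intertwiner spaces $\Mor(u^r,u^s)$ by color pattern that recovers the coloring of $p$; the paper glosses over this and literally asserts linear independence of the $T_r$ over all of $P^{\twocol}(k,l)$, which is false as stated but harmless once read with the coloring fixed.
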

\begin{proof}
Surjectivity is clear by the definition of easy quantum groups. As for injectivity, assume that two categories $\CC$ and $\DD$ yield the same sequence $(G_n)$ of easy quantum groups. Let $p\in \CC(k,l)$ and let $n\geq k+l$. Then the associated linear map $T_p$ is an intertwiner for $G_n$, since $G_n$ is easy with category $\CC$. Since it is also easy with $\DD$, the map $T_p$ is in the span of all maps $T_q$ for $q\in\DD(k,l)$. Now, the maps $T_r$ for $r\in P^{\twocol}(k,l)$ are linearly independent since $n\geq k+l$ (see \cite[Th. 1.10]{banica2009liberation} or  \cite[Sect. 3]{bergeron2008invariants}). Thus, $T_p$ must coincide with one of the $T_q$, $q\in \DD(k,l)$ and hence $p=q\in\DD$.
\end{proof}

\begin{rem}
As discussed in Section \ref{SectCMQG}, the  $W^*$-category appearing in Proposition \ref{PropTK} does not necessarily give all representations of the easy  quantum group, since it might not be complete. A study of the completion of this $W^*$-category is implicitely given in the article \cite{freslonweber}.
\end{rem}

\section{Free unitary easy quantum groups}\label{SectFreeUnitary}

In this section we describe the easy quantum groups corresponding to categories of noncrossing partitions. Our main result may be found in Section \ref{SectMain}.

\subsection{Definition of the free case}

\begin{defn}
A \emph{free easy quantum group} is an easy quantum group $G$ such that the corresponding category of partitions $\mathcal C\subset P^{\twocol}$ contains only noncrossing partitions, thus $\CC\subset NC^{\twocol}$.
\end{defn}

The terminology goes back to Wang's papers \cite{wang1995free, wang1998quantum} and more explicitely to  \cite{banica2007hyperoctahedral}. See also \cite{freslon2014partition} for a general definition of a free compact matrix quantum group, tracing back to \cite{banica2009fusion}. In principle,
Theorems \ref{ThmClassiGlob} and \ref{ThmClassiLoc} yield the complete list of free easy quantum groups. However, their interplay with other quantum groups might not be so clear right away. In particular, it is interesting to know how ``new'' or exotic they are. We will now reveal how these free easy quantum groups may be constructed from some basic quantum groups using products with reflection groups as explained below.

\subsection{Free and tensor complexifications with $\Z_d$}\label{SectProd1}

In \cite{wang1995tensor} and \cite{wang1995free}, Wang proved the existence of a comultiplication on the free product as well as on the tensor  product of the $C^*$-algebras associated to quantum groups (see also \cite{timmermann2008invitation}). More precisely, let $G$ and $H$ be two compact (matrix) quantum groups with comultiplications $\Delta_G$ resp. $\Delta_H$. Let $C(G)\square C(H)$ either be the unital free product $C(G)*C(H)$ of the two $C^*$-algebras or the maximal tensor product $C(G)\otimes_{\max}C(H)$. Denote by $\iota_{C(G)}$ the embedding of $C(G)$ into $C(G)\square C(H)$ and likewise by $\iota_{C(G)\square C(G)}$ the embedding of $C(G)\otimes_{\min} C(G)$ into $(C(G)\square C(H))\otimes_{\min}(C(G)\square C(H))$.

\begin{prop}[\cite{wang1995tensor,wang1995free}]\label{PropWangFreeTens}
Given two compact (matrix) quantum groups $G$ and $H$, there is always a comultiplication $\Delta$ on $C(G)\square C(H)$ for $\square\in\{*,\otimes_{\max}\}$ such that:
\[\Delta\circ\iota_{C(G)}=\iota_{C(G)\square C(G)}\circ \Delta_{G} \qquad \textnormal{and}\qquad \Delta\circ\iota_{C(H)}=\iota_{C(H)\square C(H)}\circ \Delta_{H}\]
\end{prop}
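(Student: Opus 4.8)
The statement to prove is Proposition \ref{PropWangFreeTens}, which asserts the existence of a comultiplication $\Delta$ on $C(G)\square C(H)$ (for $\square$ either the unital free product or the maximal tensor product) compatible with the comultiplications $\Delta_G$ and $\Delta_H$ via the canonical embeddings.

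\textbf{Plan of proof.} The proof is a standard application of the universal property satisfied by both the unital free product and the maximal tensor product of $C^*$-algebras. First I would recall that $C(G)\square C(H)$ is characterized by the property that $^*$-homomorphisms out of it into a $C^*$-algebra $B$ correspond bijectively to pairs of $^*$-homomorphisms $C(G)\to B$ and $C(H)\to B$ which, in the case $\square=\otimes_{\max}$, have commuting ranges (for $\square=*$ there is no commutation constraint). I would then set $B:=(C(G)\square C(H))\otimes_{\min}(C(G)\square C(H))$ and construct two maps into $B$: namely $\iota_{C(G)\square C(G)}\circ\Delta_G\colon C(G)\to B$ and $\iota_{C(H)\square C(H)}\circ\Delta_H\colon C(H)\to B$, each of which is a composition of $^*$-homomorphisms hence a $^*$-homomorphism. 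The universal property then produces a unique $^*$-homomorphism $\Delta\colon C(G)\square C(H)\to B$ restricting to these two maps on the respective copies, which is exactly the required compatibility.

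\textbf{Key steps in order.} (1) State precisely the universal property of $\square$ and fix notation for the four embeddings involved; in particular note that $\iota_{C(G)\square C(G)}$ denotes the embedding of $C(G)\otimes_{\min}C(G)$ into $B$ and that there are analogous maps for the ``mixed'' legs which we will not need. (2) Verify that the two composite maps $\iota_{C(G)\square C(G)}\circ\Delta_G$ and $\iota_{C(H)\square C(H)}\circ\Delta_H$ are well-defined $^*$-homomorphisms into $B$; this uses only that $\Delta_G,\Delta_H$ are $^*$-homomorphisms into the minimal tensor products $C(G)\otimes_{\min}C(G)$, $C(H)\otimes_{\min}C(H)$, together with continuity of the embeddings. (3) In the case $\square=\otimes_{\max}$, check that the ranges of these two composites commute inside $B$: this is immediate because the range of the first lies in $\iota_{C(G)\square C(G)}(C(G)\otimes_{\min}C(G))$, the range of the second in $\iota_{C(H)\square C(H)}(C(H)\otimes_{\min}C(H))$, and both legs of $B$ are the tensor product $C(G)\otimes_{\max}C(H)$ in which $C(G)$ and $C(H)$ commute; one spells this out on elementary tensors. (4) Invoke the universal property to obtain $\Delta$, and read off the stated two identities $\Delta\circ\iota_{C(G)}=\iota_{C(G)\square C(G)}\circ\Delta_G$ and $\Delta\circ\iota_{C(H)}=\iota_{C(H)\square C(H)}\circ\Delta_H$ directly from the construction. (One may additionally remark that $\Delta$ is coassociative, again by uniqueness in the universal property applied to the triple product, though this is not part of the statement.)

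\textbf{Main obstacle.} There is essentially no deep obstacle: the content of the proposition is exactly Wang's observation in \cite{wang1995tensor,wang1995free}, and once one has the right universal property the map $\Delta$ falls out. The one point requiring a little care is bookkeeping with the various tensor-product topologies — one must use $\otimes_{\max}$ (not $\otimes_{\min}$) for $C(G)\square C(H)$ in order that the universal property be available, while the legs of $\Delta$ naturally live in minimal tensor products $C(G)\otimes_{\min}C(G)$ and then embed into the minimal tensor square of $C(G)\square C(H)$; checking that these embeddings are honest (injective) $^*$-homomorphisms and that the commutation of ranges survives passage to $\otimes_{\min}$ of the big algebra is the only slightly technical part. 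Since the paper only needs the existence of \emph{a} comultiplication and not its uniqueness or faithfulness properties, I would keep this verification brief and cite \cite{wang1995tensor,wang1995free,timmermann2008invitation} for the details.
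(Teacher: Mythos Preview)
Your proposal is correct and follows exactly the standard argument due to Wang; note, however, that the paper does not give its own proof of this proposition but simply cites \cite{wang1995tensor,wang1995free} for it. Your sketch via the universal property of $*$ and $\otimes_{\max}$ is precisely Wang's construction, so there is nothing to compare.
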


As a consequence, one can define the free product and the direct product of compact matrix quantum groups. The fundamental corepresentation is then given by the direct sum of these representations, thus by $\begin{pmatrix} u&0\\0&v\end{pmatrix}$, where $u$ and $v$ are the matrices of generators for $G$ resp. $H$. We now define another kind of free resp. tensor product of two compact matrix quantum groups. 

\begin{rem}\label{RemUniv}
Recall that for unital $C^*$-algebras $A$ and $B$, the maximal tensor product $A\otimes_{\max} B$ can be seen as the universal $C^*$-algebra generated by elements $a\in A$ (with the relations of $A$) and $b\in B$ (with the relations of $B$) such that all such $a$ and $b$ commute; the units of $A$ and $B$ are identified. We thus simply write $ab$ for elements $a\otimes b$. The free product $A*B$ in turn is the universal $C^*$-algebra generated by elements $a\in A$ (with the relations of $A$) and $b\in B$ (with the relations of $B$) without any further relations; again the units of $A$ and $B$ are identified.
\end{rem}

\begin{defn}
Let $(G,u)$ and $(H,v)$ be two compact matrix quantum groups with $u$ of size $n$ and $v$ of size $m$.
\begin{itemize}
\item[(a)] The \emph{glued free product $G\freeglued H$ of $G$ and $H$} is given by the $C^*$-subalgebra $C^*(u_{ij}v_{kl}\;|\; 1\leq i,j\leq n \textnormal{ and } 1\leq k,l\leq m)\subset C(G)*C(H)$.
\item[(b)] The \emph{glued direct product $G\tensorglued H$ of $G$ and $H$} is given by the $C^*$-subalgebra $C^*(u_{ij}v_{kl}\;|\; 1\leq i,j\leq n \textnormal{ and } 1\leq k,l\leq m)\subset C(G)\otimes_{\max}C(H)$.
\end{itemize}
\end{defn}

As a simple consequence of Wang's result, the glued free product and the glued direct product are again compact matrix quantum groups.

\begin{cor}\label{CorGlued}
The $C^*$-subalgebra $C^*(u_{ij}v_{kl}\;|\; 1\leq i,j\leq n \textnormal{ and } 1\leq k,l\leq m)$ of $C(G)\square C(H)$,  $\square\in\{*,\otimes_{\max}\}$  admits a comultiplication $\Delta(u_{ij}v_{kl})=\Delta_G(u_{ij})\Delta_H(v_{kl})$.
\end{cor}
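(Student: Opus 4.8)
The key point is that the restriction of a tensor-product $*$-homomorphism to a $C^*$-subalgebra is again a $*$-homomorphism, so the entire content is to check that $\Delta$ from Proposition \ref{PropWangFreeTens} maps the subalgebra $B:=C^*(u_{ij}v_{kl})$ into $B\otimes_{\min} B$ (or rather into the corresponding subalgebra of $(C(G)\square C(H))\otimes_{\min}(C(G)\square C(H))$), and that on generators it has the asserted form. First I would compute $\Delta$ on a generator $u_{ij}v_{kl}$. Since $\Delta$ is a $*$-homomorphism and, by the two intertwining identities in Proposition \ref{PropWangFreeTens}, it restricts on $C(G)$ to (the image of) $\Delta_G$ and on $C(H)$ to (the image of) $\Delta_H$, we get
\[
\Delta(u_{ij}v_{kl})=\Delta(u_{ij})\Delta(v_{kl})=\Bigl(\sum_a u_{ia}\otimes u_{aj}\Bigr)\Bigl(\sum_b v_{kb}\otimes v_{bl}\Bigr)=\sum_{a,b}(u_{ia}v_{kb})\otimes(u_{aj}v_{bl}),
\]
where the middle equality uses that $\iota_{C(G)}$ and $\iota_{C(H)}$ have commuting ranges inside $C(G)\otimes_{\max}C(H)$ in the tensor case, and in the free case simply uses multiplicativity of $\Delta$ together with the definition of the comultiplication on the free product. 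This shows in particular that $\Delta(u_{ij}v_{kl})$ lies in the $C^*$-subalgebra generated by the elements $(u_{ia}v_{kb})\otimes(u_{cj}v_{dl})$, i.e. in $\iota_{B\square B}(B\otimes_{\min}B)$, so $\Delta$ restricts to a unital $*$-homomorphism $B\to B\otimes_{\min}B$. Since $B$ is generated by $n^2m^2$ elements, $u^G\oplus$-type arguments are not needed; the matrix $w=(w_{(ik)(jl)})$ with $w_{(ik)(jl)}:=u_{ij}v_{kl}$ together with $\bar w$ are easily seen to be invertible in $M_{nm}(B)$ using invertibility of $u,\bar u,v,\bar v$ (indeed $w=u\otimes v$ as a matrix with entries multiplied in $C(G)\square C(H)$, and its inverse is built from $u^{-1}\otimes v^{-1}$), so $(G\freeglued H, w)$ and $(G\tensorglued H, w)$ satisfy all three axioms of Definition \ref{DefCMQG}.

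The formula $\Delta(u_{ij}v_{kl})=\Delta_G(u_{ij})\Delta_H(v_{kl})$ in the statement is then just a rewriting: $\Delta_G(u_{ij})\Delta_H(v_{kl})=(\sum_a u_{ia}\otimes u_{aj})(\sum_b v_{kb}\otimes v_{bl})$ interpreted inside $(C(G)\square C(H))\otimes_{\min}(C(G)\square C(H))$ via the embeddings $\iota_{C(G)\square C(G)}$ and $\iota_{C(H)\square C(H)}$, which coincides with the displayed sum above; so the two identities of Proposition \ref{PropWangFreeTens} are exactly what makes this equality hold. I would spell this out in one line.

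The only genuine subtlety — and the place I expect to spend the most care — is the well-definedness and the claim that $\Delta$ actually maps \emph{into} the subalgebra $B\otimes_{\min}B$ rather than merely into the big algebra $(C(G)\square C(H))\otimes_{\min}(C(G)\square C(H))$: one must observe that each $\Delta(u_{ij}v_{kl})$ is a finite sum of products $(u_{ia}v_{kb})\otimes(u_{cj}v_{dl})$, each factor of which is an elementary tensor of two generators of $B$, hence lies in $\iota_{B\square B}(B\otimes_{\min} B)$; since this set is a $C^*$-subalgebra and the $u_{ij}v_{kl}$ generate $B$, the image of all of $B$ under $\Delta$ lands there. Coassociativity and the counit condition, if one wishes to record that $B$ is a genuine compact quantum group rather than just a compact matrix quantum group in the sense of Definition \ref{DefCMQG}, follow automatically by restricting the coassociativity of $\Delta$ on $C(G)\square C(H)$, but Definition \ref{DefCMQG} does not demand them, so I would omit this. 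No further relations need checking because, as in Remark \ref{RemUniv}, $B$ is defined purely as a generated $C^*$-subalgebra, carrying whatever relations it inherits.
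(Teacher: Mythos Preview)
Your proof is correct and takes the same approach as the paper's one-line argument (``Restriction of the comultiplication $\Delta$ of Proposition \ref{PropWangFreeTens} yields the result''), simply spelling out the computation $\Delta(u_{ij}v_{kl})=\sum_{a,b}(u_{ia}v_{kb})\otimes(u_{aj}v_{bl})$ and the verification that the image lands in $B\otimes_{\min}B$, which the paper leaves implicit. Your additional check that $w=u\otimes v$ and $\bar w$ are invertible is not strictly part of the corollary but anticipates the paper's subsequent remark that the glued products are compact matrix quantum groups.
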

\begin{proof}
Restriction of the comultiplication $\Delta$ of Proposition \ref{PropWangFreeTens} yields the result.
\end{proof}

Note that in general, the glued products differ from Wang's products, i.e. we have $G*H\neq G\freeglued H$ and $G\times H\neq G\tensorglued H$. The crucial difference lies in the fundamental matrix $u\oplus v$ (Wang) versus $u\otimes v$ (glued).

To a discrete group $\Gamma$, one can associate the universal $C^*$-algebra $C^*(\Gamma)$ generated by unitaries $u_g$, $g\in \Gamma$ with $u_gu_h=u_{gh}$, $u_g^*=u_{g^{-1}}$. It is well known that the comultiplication $\Delta(u_g)=u_g\otimes u_g$ turns it into a compact quantum group denoted by $\widehat\Gamma$.

\begin{cor}
Let $\Gamma$ be a discrete group generated by a single element $g_0$, and denote by $z$ the generator $u_{g_0}$ of $C^*(\Gamma)$. Let $(G,u)$ be a compact matrix quantum group. Then $G\freeglued \widehat\Gamma$ and $G\tensorglued\widehat\Gamma$ are compact matrix quantum groups given by $C^*(u_{ij}z)$ in $C(G)* C^*(\Gamma)$ resp. in $C(G)\otimes_{\max} C^*(\Gamma)$ and $\Delta(u_{ij}z)=\sum_k u_{ik}z\otimes u_{kj}z$.
\end{cor}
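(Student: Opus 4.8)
The plan is to derive the statement from Corollary~\ref{CorGlued} by specializing $H$ to $\widehat\Gamma$. First I would verify that $\widehat\Gamma$ is a compact matrix quantum group in the sense of Definition~\ref{DefCMQG} with fundamental matrix $v=(z)$ of size $m=1$: since $\Gamma$ is generated by the single element $g_0$ and $z=u_{g_0}$ is a unitary, every $u_g$ with $g\in\Gamma$ is a product of copies of $z$ and $z^{*}=z^{-1}$, so $C^{*}(\Gamma)=C^{*}(z)$ is generated by $m^{2}=1$ element; the matrices $v=(z)$ and $\bar v=(z^{*})$ are invertible because $z$ is unitary; and $\Delta_{\widehat\Gamma}(z)=z\otimes z$ extends to a $^{*}$-homomorphism, as recalled just above.

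Next I would apply Corollary~\ref{CorGlued}, together with the remark preceding it that glued products of compact matrix quantum groups are again compact matrix quantum groups, to the pair $(G,u)$ and $(\widehat\Gamma,v)$. Since $m=1$, the generating set $\{u_{ij}v_{kl}\mid 1\le i,j\le n,\ 1\le k,l\le 1\}$ collapses to $\{u_{ij}z\mid 1\le i,j\le n\}$, so $G\freeglued\widehat\Gamma$, respectively $G\tensorglued\widehat\Gamma$, is the compact matrix quantum group $C^{*}(u_{ij}z)\subset C(G)*C^{*}(\Gamma)$, respectively $C^{*}(u_{ij}z)\subset C(G)\otimes_{\max}C^{*}(\Gamma)$, with the $n^{2}$ generators $u_{ij}z$.

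Finally I would identify the comultiplication. Corollary~\ref{CorGlued} gives $\Delta(u_{ij}z)=\Delta_G(u_{ij})\,\Delta_{\widehat\Gamma}(z)$, both factors being taken inside $(C(G)\square C^{*}(\Gamma))\otimes_{\min}(C(G)\square C^{*}(\Gamma))$. Substituting $\Delta_G(u_{ij})=\sum_k u_{ik}\otimes u_{kj}$ and $\Delta_{\widehat\Gamma}(z)=z\otimes z$, and using that multiplication in the minimal tensor product acts on the two legs separately, one obtains $\bigl(\sum_k u_{ik}\otimes u_{kj}\bigr)(z\otimes z)=\sum_k u_{ik}z\otimes u_{kj}z$, which is the asserted formula. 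I expect essentially no obstacle: the whole statement is a routine specialization of Corollary~\ref{CorGlued}, the only points deserving a word being that $C^{*}(\Gamma)$ is singly generated (so that $m=1$ is legitimate) and that multiplication in the minimal tensor product distributes over the two tensor legs. If one preferred to avoid invoking Corollary~\ref{CorGlued}, one could also check directly that the fundamental matrix $w=(u_{ij}z)=u\cdot(zI_n)$ and its conjugate $\bar w=(z^{*}I_n)\cdot\bar u$ are invertible, being products of the invertible matrices $u$, $\bar u$ and $zI_n$, $z^{*}I_n$.
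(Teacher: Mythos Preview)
Your proof is correct and follows essentially the same approach as the paper: observe that $(C^*(\Gamma),z)$ is a compact matrix quantum group with fundamental matrix of size $1$, then apply Corollary~\ref{CorGlued}. The paper's proof is simply a two-sentence version of what you wrote, omitting the verification of the axioms for $\widehat\Gamma$ and the explicit computation of $\Delta(u_{ij}z)$ that you spelled out.
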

\begin{proof}
Since $\Gamma$ is generated by a single element, $(C^*(\Gamma),z)$ is a compact matrix quantum group with matrix of size 1. Using Corollary \ref{CorGlued} we obtain the result.
\end{proof}

Denote by $\Z_d$ the cyclic group $\Z_d:=\Z / d\Z$. Note that $C^*(\Z_d)$ is the universal $C^*$-algebra generated by a unitary $z$ such that $z^d=1$. In order to simplify the notation, we will also write $\Z_0$ for $\Z$.

\begin{defn}
Let $G$ be a compact matrix quantum group.
\begin{itemize}
\item[(a)] The quantum group $G\freeglued\widehat{\Z_d}$ is called the \emph{free $d$-complexification of $G$} and $G\freeglued\widehat{\Z}$ is called the \emph{free complexification}.
\item[(b)] The quantum group $G\tensorglued\widehat{\Z_d}$ is called the \emph{tensor $d$-complexification of $G$} and $G\tensorglued\widehat{\Z}$ is called the \emph{tensor complexification}.
\end{itemize}
\end{defn}

The above definition is a generalization of Banica's free complexification \cite{banica2008note}, \cite{banica1999representations}.

\begin{lem}\label{LemNonEasyMaybe}
Let $G\subset U_n^+$ be a compact matrix quantum group. Then $G\subset G\freeglued\widehat{\Z_d}\subset U_n^+$ and $G\subset G\tensorglued\widehat{\Z_d}\subset U_n^+$ for $d\in\N_0$. Note that if $G\subset O_n^+$, we do not have $G\freeglued\widehat{\Z_d}\subset O_n^+$ or $G\tensorglued\widehat{\Z_d}\subset O_n^+$ in general.
\end{lem}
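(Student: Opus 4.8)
The plan is to verify each of the asserted inclusions by exhibiting the appropriate surjective $*$-homomorphism between the relevant universal $C^*$-algebras. Throughout I use that $C(G\freeglued\widehat{\Z_d})=C^*(w_{ij}\mid 1\le i,j\le n)$ with $w_{ij}:=u_{ij}z$ inside $C(G)*C^*(\Z_d)$, and analogously $C(G\tensorglued\widehat{\Z_d})=C^*(w_{ij})\subset C(G)\otimes_{\max}C^*(\Z_d)$, where $z$ is the canonical unitary generating $C^*(\Z_d)$ (recall $\Z_0=\Z$). I will freely invoke the universal properties of the maximal free and tensor products recorded in Remark~\ref{RemUniv}, and the fact that, by the hypothesis $G\subset U_n^+$, the matrices $u$ and $\bar u$ are unitary over $C(G)$.

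First I would prove $G\subset G\freeglued\widehat{\Z_d}$ and $G\subset G\tensorglued\widehat{\Z_d}$. The universal property of $C^*(\Z_d)$ yields a $*$-homomorphism $C^*(\Z_d)\to\C$, $z\mapsto 1$; combining it with $\id_{C(G)}$ — its range $\C\cdot 1$ is central, so this is also admissible for $\otimes_{\max}$ — gives a $*$-homomorphism $C(G)\square C^*(\Z_d)\to C(G)$ for both $\square\in\{*,\otimes_{\max}\}$. Its restriction to $C^*(w_{ij})$ maps onto $C^*(u_{ij})=C(G)$ and sends $w_{ij}\mapsto u_{ij}$, i.e. fundamental generators to fundamental generators, which is precisely $G\subset G\freeglued\widehat{\Z_d}$ (resp. $G\subset G\tensorglued\widehat{\Z_d}$) in the sense of Section~\ref{SectCMQG}.

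Next I would prove $G\freeglued\widehat{\Z_d}\subset U_n^+$ and $G\tensorglued\widehat{\Z_d}\subset U_n^+$. Let $D:=zI_n\in M_n(C(G)\square C^*(\Z_d))$ be the diagonal matrix with all diagonal entries $z$; it is unitary since $z$ is. Then $w=uD$ and $\bar w=D^*\bar u$, so $w$ and $\bar w$ are products of unitary matrices and hence unitary. The entries of $w$, $\bar w$ and of their adjoints are all among the $w_{ij}$ and the $w_{ij}^*$, hence lie in $C^*(w_{ij})=C(G\freeglued\widehat{\Z_d})$, so the identities $ww^*=w^*w=1$ and $\bar w(\bar w)^*=(\bar w)^*\bar w=1$ already hold over that subalgebra. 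By the universal property defining $C(U_n^+)$ (Example~\ref{ExWang}) there is then a $*$-homomorphism $C(U_n^+)\to C(G\freeglued\widehat{\Z_d})$ sending the fundamental generators to the $w_{ij}$, giving $G\freeglued\widehat{\Z_d}\subset U_n^+$; the tensor case is identical.

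Finally, for the remark I would exhibit a counterexample. If $G\subset O_n^+$ then $u_{ij}=u_{ij}^*$, so $w_{ij}^*=z^*u_{ij}$; since the defining generators of $C(O_n^+)$ are self-adjoint, an inclusion $G\freeglued\widehat{\Z_d}\subset O_n^+$ would force every $w_{ij}$ to be self-adjoint, i.e. $u_{ij}z=z^*u_{ij}$. Already for $G$ trivial ($n=1$, $u_{11}=1$), where $C(G\freeglued\widehat{\Z_d})=C^*(\Z_d)$ with generator $w_{11}=z$, this says $z=z^*$, which fails whenever $d\notin\{1,2\}$ (for instance $d=0$, where $C^*(\Z)\cong C(\mathbb T)$ and $z$ is the unitary coordinate function); the tensor case is the same. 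I do not anticipate any genuine obstacle — each point is a short diagram chase with the universal properties — the only thing to be careful about is confirming that $w$ and $\bar w$ are unitary as matrices over the \emph{subalgebra} $C^*(w_{ij})$ rather than merely over the ambient free or tensor product, which holds because all the entries in question already lie in $C^*(w_{ij})$.
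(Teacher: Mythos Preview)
Your proof is correct and follows essentially the same approach as the paper: the map $C(G)\square C^*(\Z_d)\to C(G)$ sending $z\mapsto 1$ gives the inclusion $G\subset G\square\widehat{\Z_d}$, and unitarity of $w=(u_{ij}z)$ and $\bar w=(z^*u_{ij}^*)$ gives $G\square\widehat{\Z_d}\subset U_n^+$. You supply more detail than the paper does---the factorization $w=uD$ with $D=zI_n$, the observation that the unitarity relations live in the subalgebra $C^*(w_{ij})$, and a concrete counterexample ($G$ trivial, $n=1$) for the orthogonal remark---whereas the paper simply notes that $u_{ij}z$ need not be selfadjoint even if $u_{ij}$ is.
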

\begin{proof}
As before, let  $\square=*$ or $\square=\otimes_{\max}$. We have $C(G)\square C^*(\Z_d)\to C(G)$ mapping $u_{ij}\mapsto u_{ij}$ and $z\mapsto 1$. This yields $G\subset G\square\widehat{\Z_d}$. As for $G\square\widehat{\Z_d}\subset U_n^+$, it is straightforward to check that the matrices $u':=(u_{ij}z)$ and $\bar u'=(z^*u_{ij}^*)$ are unitaries. As for the remark on the orthogonal case, note that $u_{ij}z$ does not need to be selfadjoint, even if $u_{ij}$ is.
\end{proof}

\subsection{The $r$-selfadjoint free $d$-complexification}\label{SectProd2}

We need to introduce yet another product of a quantum group with $\widehat{\Z_d}$.

\begin{defn}
Let $(G,u)$ be a compact matrix quantum group with $u=(u_{ij})$ of size $n$, and let $r,d\in\N$. Denote the generator of $C^*(\Z_d)$ by $z$. Let $A$ be the quotient of $C(G)*C^*(\Z_d)$ by the ideal generated by $u_{ij}z^r=(z^*)^ru_{ij}^*$ for all $i,j$.
The \emph{$r$-selfadjoint free $d$-complexification} $G\freeglued_r \widehat{\Z_d}$ is given by the $C^*$-subalgebra $C^*(u_{ij}z, 1\leq i,j\leq n)$ of $A$.
\end{defn}

In a certain sense, the above product is a free $d$-complexification with glimpses of selfadjointness -- the elements $u_{ij}z$ are not self-adjoint in general but $u_{ij}z^r$ is. Note that from the definition it is not clear whether any of the elements $u_{ij}z^r$ lies in $C^*(u_{ij}z, 1\leq i,j\leq n)$.

\begin{prop}
The $r$-selfadjoint free $d$-complexification gives rise to a compact matrix quantum group: There is a $*$-homomorphism $\Delta:A\to A\otimes_{\min} A$ on the $C^*$-algebra $A$ of the above definition sending $u_{ij}$ to $\sum_k u_{ik}\otimes u_{kj}$ and $z$ to $z\otimes z$. It restricts to $C^*(u_{ij}z, 1\leq i,j\leq n)\to C^*(u_{ij}z, 1\leq i,j\leq n) \otimes_{\min} C^*(u_{ij}z, 1\leq i,j\leq n)$ turning $C^*(u_{ij}z, 1\leq i,j\leq n)$ into a compact matrix quantum group.
\end{prop}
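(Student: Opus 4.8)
The plan is to obtain $\Delta$ on $A$ by pushing down Wang's comultiplication on the free product $C(G)*C^*(\Z_d)$, and then to restrict it to the $C^*$-subalgebra $B:=C^*(u_{ij}z\mid 1\leq i,j\leq n)$ of $A$. By Proposition~\ref{PropWangFreeTens} there is a $*$-homomorphism $\Delta_0\colon C(G)*C^*(\Z_d)\to\bigl(C(G)*C^*(\Z_d)\bigr)\otimes_{\min}\bigl(C(G)*C^*(\Z_d)\bigr)$ with $\Delta_0(u_{ij})=\sum_k u_{ik}\otimes u_{kj}$ and $\Delta_0(z)=z\otimes z$. Let $q\colon C(G)*C^*(\Z_d)\to A$ be the defining quotient map, so that $\ker q$ is the closed two-sided ideal generated by the elements $u_{ij}z^r-(z^*)^r u_{ij}^*$. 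Since the minimal tensor product is functorial, $q\otimes q$ is a $*$-homomorphism into $A\otimes_{\min}A$, and I would show that $(q\otimes q)\circ\Delta_0$ annihilates $\ker q$; being a $*$-homomorphism, it suffices that it kill the ideal generators. This is a one-line check: rewriting each tensor leg by the defining relation of $A$ one gets $(q\otimes q)\Delta_0(u_{ij}z^r)=\sum_k(u_{ik}z^r)\otimes(u_{kj}z^r)=\sum_k\bigl((z^*)^r u_{ik}^*\bigr)\otimes\bigl((z^*)^r u_{kj}^*\bigr)$, whereas $(q\otimes q)\Delta_0\bigl((z^*)^r u_{ij}^*\bigr)=\bigl((z^*)^r\otimes(z^*)^r\bigr)\sum_k u_{ik}^*\otimes u_{kj}^*$ is the same expression. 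Hence $(q\otimes q)\circ\Delta_0$ factors through $q$, yielding a (coassociative) $*$-homomorphism $\Delta\colon A\to A\otimes_{\min}A$ with $\Delta(u_{ij})=\sum_k u_{ik}\otimes u_{kj}$ and $\Delta(z)=z\otimes z$.

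Next I would restrict $\Delta$ to $B$. One has $\Delta(u_{ij}z)=\bigl(\sum_k u_{ik}\otimes u_{kj}\bigr)(z\otimes z)=\sum_k (u_{ik}z)\otimes(u_{kj}z)$, which lies in $B\otimes_{\min}B$, the latter being identified in the usual way with a $C^*$-subalgebra of $A\otimes_{\min}A$. Since $B$ is generated by the elements $u_{ij}z$ and $\Delta$ is a $*$-homomorphism, it follows that $\Delta(B)\subseteq B\otimes_{\min}B$; writing $u'_{ij}:=u_{ij}z$ and $u'=(u'_{ij})$, the restriction satisfies $\Delta(u'_{ij})=\sum_k u'_{ik}\otimes u'_{kj}$.

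It then remains to verify the axioms of Definition~\ref{DefCMQG} for $(B,u')$. That $B$ is generated by the $n^2$ elements $u'_{ij}$ is built into the definition of $B$, and the restricted $\Delta$ is the required $*$-homomorphism, so the crux is the invertibility of $u'$ and $\bar u'=((u'_{ij})^*)$ in $M_n(B)$. In $M_n(A)$ this is immediate: there $u'=u\cdot\operatorname{diag}(z,\dots,z)$ and $\bar u'=\operatorname{diag}(z^*,\dots,z^*)\cdot\bar u$ are products of invertible matrices, the images of $u$ and $\bar u$ under the unital $*$-homomorphism $C(G)\to A$ being invertible and $z$ being a unitary. To pass this to $M_n(B)$, I would observe that $u'(u')^*$ and $\bar u'(\bar u')^*$ are positive and invertible in $M_n(A)$ with all entries in $B$ --- explicitly $\bigl(u'(u')^*\bigr)_{ij}=\sum_k u'_{ik}(u'_{jk})^*$ and $\bigl(\bar u'(\bar u')^*\bigr)_{ij}=\sum_k (u'_{ik})^* u'_{jk}$ --- so that continuous functional calculus (the relevant function $t\mapsto t^{-1}$ being, on the spectrum, a uniform limit of polynomials with zero constant term) places both their inverses and the unit $1_{M_n(A)}$ into $M_n(B)$. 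In particular $B$ is unital, and $(u')^{-1}=(u')^*\bigl(u'(u')^*\bigr)^{-1}$ and $(\bar u')^{-1}=(\bar u')^*\bigl(\bar u'(\bar u')^*\bigr)^{-1}$ lie in $M_n(B)$; hence $(B,u')$ is a compact matrix quantum group.

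The first two steps amount to Wang's theorem together with a routine descent and restriction. I expect the real work to be concentrated in the last step: invertibility of the fundamental matrices, as well as unitality of $B$, must be witnessed inside $M_n(B)$ rather than merely in $M_n(A)$, and this is precisely what the functional-calculus argument, fed by the explicit formulas for the entries of $u'(u')^*$ and $\bar u'(\bar u')^*$, delivers.
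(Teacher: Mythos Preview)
Your approach coincides with the paper's: you push Wang's comultiplication on $C(G)*C^*(\Z_d)$ through the quotient by checking that the ideal generators $u_{ij}z^r-(z^*)^ru_{ij}^*$ are annihilated, and then restrict. The paper's proof stops there; it does not address invertibility of $u'$ and $\bar u'$ in $M_n(B)$ or unitality of $B$, leaving these implicit. Your final paragraph supplies exactly this: the functional-calculus argument showing that $1_{M_n(A)}$ and $(u'(u')^*)^{-1}$, $(\bar u'(\bar u')^*)^{-1}$ already lie in $M_n(B)$ is correct and is the natural way to handle a general $(G,u)$ with $u,\bar u$ merely invertible. In the paper's actual applications $u$ and $\bar u$ are unitary, so $u'(u')^*=1$ and $(\bar u')^*\bar u'=1$ hold on the nose and the point becomes trivial, which is presumably why the paper omits it; your argument covers the general statement as written.
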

\begin{proof}
By Wang's result, Proposition \ref{PropWangFreeTens}, there is a comultiplication $\Delta_0$ on $B:=C(G)*C^*(\Z_d)$. Composing it with the quotient map $\pi\otimes\pi:B\otimes B\to A\otimes A$, we obtain a $*$-homomorphism from $B$ to $A\otimes A$ mapping $u_{ij}$ to $u_{ij}':=\sum_k u_{ik}\otimes u_{kj}$ and $z$ to $z':=z\otimes z$. Since $u_{ij}'$ and $z'$ satisfy $u_{ij}'z'^r=(z'^*)^ru_{ij}'^*$, the map $(\pi\otimes\pi)\circ\Delta_0$ factors through $\pi:B\to A$ and we obtain a map $\Delta:A\to A\otimes A$ as desired.
\end{proof}

In principle one could also define an $r$-selfadjoint tensor $d$-complexification, but we do not need it for our purpose.

\subsection{Free wreath products}\label{SectProd3}

Finally, we also need a product construction provided by Bichon \cite{bichon2004free}. 
Denote by $z\mapsto z^{(i)}$ the embedding of $C^*(\Z_d)$ into the $i$-th copy of $C^*(\Z_d)$ in the $n$-fold free product $C^*(\Z_d)^{*n}$.

\begin{defn}[{\cite{bichon2004free}}]
Let $(G,u)$ be a compact matrix quantum group. The \emph{free wreath product}  $\widehat{\Z_d}\wr_*S_n^+$ is given by  the quotient of $C^*(\Z_d)^{*n}*C(S_n^+)$ by the ideal generated by $u_{ij}z^{(i)}=z^{(i)}u_{ij}$. The fundamental unitary is given by $(u_{ij}z^{(i)})_{i,j}$.
\end{defn}

Strictly following Bichon's article, the free wreath product comes without a specification of the fundamental unitary in the case of compact matrix quantum groups. The one that is commonly used nowadays is the one above  (see for instance \cite{wahl}, \cite{lemeuxfusion} or \cite{banica2009fusion}). In this sense, the free wreath product is already in a ``glued version''. Note that due to the relations of $C(S_n^+)$, the $C^*$-subalgebra generated by the elements $u_{ij}z^{(i)}$ coincides with the whole $C^*$-algebra, so the question of how to endow the $C^*$-algebra in the above definition with a compact matrix quantum group structure lies only in the definition of the generating unitary. See \cite{bichon2004free} for a proof that the free wreath product is a compact matrix quantum group.

\subsection{Construction of free easy quantum groups using $d$-complexifications}\label{SectMain}

Using the products  introduced in the previous sections, we may now construct the free easy quantum groups out of a few basic ones. This constitutes the main theorem of this article. Before doing so, we recall the seven free orthogonal easy quantum groups as classified in \cite{weber2013classification}.

\begin{rem}\label{Reminder}
Recall that a category $\CC\subset NC$ of non-colored partitions can be viewed as a category of colored partitions containing $\idpartwb$ (or equivalently $\paarpartww$), see Section \ref{SectOnecolored}. We now list the seven free orthogonal easy quantum groups and their corresponding categories as subsets of $NC^{\twocol}$. They appear in the list of Theorem \ref{ThmClassiGlob}.
For the convenience of the reader, we also list the $C^*$-algebraic relations, besides those making the $u_{ij}$ selfadjoint and $u$ orthogonal (i.e. $\sum_k u_{ik}u_{jk}=\sum_k u_{ki}u_{kj}=\delta_{ij}$).
\begin{itemize}
\item $O_n^+$: $\categ{\OOO}{\glob}{2}=\langle\idpartwb\rangle$, no further relations. 
\item $H_n^+$: $\categ{\HHH}{\glob}{2}=\langle\idpartwb,\vierpartwbwb\rangle$,  $u_{ik}u_{jk}=u_{ki}u_{kj}=0$ if $i\neq j$.
\item $S_n^+$: $\categ{\SSS}{\glob}{1}=\langle\idpartwb,\vierpartwbwb,\singletonw\rangle$,  $u_{ij}=u_{ij}^*=u_{ij}^2$,  $\sum_k u_{ik}=\sum_k u_{kj}=1$.
\item ${S'_n}^+$: $\categ{\SSS}{\glob}{2}=\langle\idpartwb,\vierpartwbwb,\singletonw\otimes\singletonb\rangle$,  $u_{ik}u_{jk}=u_{ki}u_{kj}=0$ if $i\neq j$, $\sum_k u_{ik}=\sum_k u_{kj}$ independent of $i,j$.
\item $B_n^+$: $\categ{\BBB'}{\glob}{1}=\langle\idpartwb,\singletonw\rangle$,   $\sum_k u_{ik}=\sum_k u_{kj}=1$.
\item ${B'_n}^+$: $\categ{\BBB'}{\glob}{2}=\langle\idpartwb,\positionerwwbb\rangle$,  $s:=\sum_k u_{ik}=\sum_k u_{kj}$ independent of $i,j$ and $u_{ij}s=su_{ij}$.
\item $B_n^{\#+}$: $\categ{\BBB}{\glob}{2}=\langle\idpartwb,\singletonw\otimes\singletonb\rangle$, $\sum_k u_{ik}=\sum_k u_{kj}$ independent of $i,j$.
\end{itemize}
\end{rem}

Using the list of Theorem \ref{ThmClassiGlob}, we obtain the following theorem which will be proven in the next subsection. We use the identification of compact matrix quantum groups as described in Definition \ref{DefW}(a).

\begin{thm}[globally colorized case]\label{ThmEasyGlob}
The globally colorized categories give rise to the following quantum groups.

\begin{tabular}{lll}
$\categ{\OOO}{\glob}{k}$: & $O_n^+\tensorglued \widehat{\Z_k}$, &$k\in 2\N_0$\\
$\categ{\HHH}{\glob}{k}$: & $H_n^+\tensorglued \widehat{\Z_k}$, &$k\in 2\N_0$\\
$\categ{\SSS}{\glob}{k}$: & $S_n^+\tensorglued \widehat{\Z_k}$, &$k\in \N_0$\\
$\categ{\BBB}{\glob}{k}$: & $B_n^{\#+}\tensorglued \widehat{\Z_k}$, &$k\in 2\N_0$\\
$\categ{\BBB'}{\glob}{k}$: & $B_n^+\tensorglued \widehat{\Z_k}$, & $k\in \N_0$
\end{tabular}

In the case $k=0$, we replace $\Z_k$ by $\Z$. For $k=1$, we have $\Z_1=\{e\}$, hence we then omit the product with $\widehat{\Z_1}$.
\end{thm}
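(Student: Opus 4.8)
The plan is to show, for each of the five families, that the easy quantum group $G_{\CC}$ attached to the globally colorized category $\CC$ by Proposition \ref{PropTK} is identical, in the sense of Definition \ref{DefW}(a), to $G_0\tensorglued\widehat{\Z_k}$, where $G_0$ is the corresponding free orthogonal easy quantum group from Remark \ref{Reminder} and the identification matches the fundamental corepresentation of $G_{\CC}$ with $(u_{ij}^{G_0}z)$, $z$ being the generator of $C^*(\Z_k)$ (with $\Z_0:=\Z$). Rather than treating the five families one by one, I would isolate a single general lemma and then read off all cases from the classification Theorem \ref{ThmClassiGlob}.

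\textbf{The lemma.} If $G_0$ is a free orthogonal easy quantum group with (non-colored) category $\DD\subset NC$, then $G_0\tensorglued\widehat{\Z_k}$ is unitary easy with category
\[\CC_{G_0}^{(k)}:=\{\,p\in NC^{\twocol}\ :\ \Psi(p)\in\DD\ \text{ and }\ k\mid c(p)\,\},\]
where for $p$ with upper colour word $r$ and lower colour word $s$ one sets $c(p):=c(s)-c(r)$, $c(\text{word})=\#\circ-\#\bullet$; this $c$ is invariant under rotation and additive under $\otimes$ and composition, so $\CC_{G_0}^{(k)}$ is manifestly a category (and $k\mid c(p)$ is read as $c(p)=0$ when $k=0$). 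To identify it with the category of $G_0\tensorglued\widehat{\Z_k}$, write $v_{ij}:=u_{ij}^{G_0}z$; since $z$ is a central unitary with $z^k=1$ and $u^{G_0}$ is orthogonal, the corepresentation $v^r$ equals $u^{\otimes|r|}z^{c(r)}$, and comparing the coefficients of the powers of $z$ in the maximal tensor product (the powers $z^0,\dots,z^{k-1}$, resp.\ $z^m$, $m\in\Z$, being linearly independent) gives
\[\Mor_{G_0\tensorglued\widehat{\Z_k}}(v^r,v^s)=\begin{cases}\Mor_{G_0}(u^{\otimes|r|},u^{\otimes|s|}),& c(r)\equiv c(s)\ (\mathrm{mod}\ k),\\ 0,&\text{otherwise},\end{cases}\]
where one uses that $u^{\otimes|r|}$ is unitary to conclude $T=0$ in the non-matching case. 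Since $G_0$ is easy with category $\DD$ and $T_p=T_{\Psi(p)}$ ignores colours, the right-hand side is exactly $\spanlin\{T_p:p\in\CC_{G_0}^{(k)}(|r|,|s|)\text{ with colours }r,s\}$ (recalling $c(p)=c(s)-c(r)$), so by Definition \ref{DefUnitaryEasy2} and Theorem \ref{ThmWorTK} the quantum group $G_0\tensorglued\widehat{\Z_k}$ is easy with category $\CC_{G_0}^{(k)}$.

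\textbf{Identifying the five categories.} It then remains to recognise $\CC_{G_0}^{(k)}$ in the list of Theorem \ref{ThmEasyGlob} for $G_0\in\{O_n^+,H_n^+,S_n^+,B_n^{\#+},B_n^+\}$. Each $\CC_{G_0}^{(k)}$ is globally colorized ($\Psi(\paarpartww\otimes\paarpartbb)\in\DD$ and $c=0$), hence by Theorem \ref{ThmClassiGlob} it coincides with one of the listed categories; to pin it down I would compute its case, using $\singletonw\otimes\singletonb\in\CC_{G_0}^{(k)}\!\iff\!\Psi(\singletonw\otimes\singletonb)\in\DD$ and $\vierpartwbwb\in\CC_{G_0}^{(k)}\!\iff\!\Psi(\vierpartwbwb)\in\DD$, and its global parameter, which equals $k$ because the positive values of $c$ on $\CC_{G_0}^{(k)}$ are precisely the positive multiples of $k$, the value $k$ being attained by $\paarpartww^{\otimes k/2}$, $b_k$, or $\singletonw^{\otimes k}$ according to the case (each of which has $\Psi\in\DD$ for the relevant $\DD$ and the relevant parity of $k$). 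In cases $\OOO,\HHH,\SSS$ this already determines the category; in case $\BBB$ one must additionally decide between $\categ{\BBB}{\glob}{k}$ and $\categ{\BBB'}{\glob}{k}$ by testing whether $\Psi(\positionerwwbb)\in\DD$, which holds for $\DD=\DD(B_n^+)$ but fails for $\DD=\DD(B_n^{\#+})$. Finally the degenerate parameters are routine remarks: $\widehat{\Z_1}$ is trivial, so $G_0\tensorglued\widehat{\Z_1}=G_0$ and $\CC_{G_0}^{(1)}$ imposes no constraint, while for $k=0$ one works with $\Z$ and the condition $c(p)=0$.

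\textbf{Where the difficulty lies.} The genuinely delicate points are (a) making the intertwiner computation for the maximal-tensor-product complexification fully rigorous, i.e.\ justifying the decomposition of $\Mor(v^r,v^s)$ into $z$-homogeneous parts and the vanishing when the $z$-degrees disagree, and (b) the case-$\BBB$ bookkeeping, which forces one to pin down the non-colored categories of $B_n^+$ and of $B_n^{\#+}$ exactly and to check membership of $\Psi(\positionerwwbb)$ in each, since this is precisely what separates two families that share the same case and global parameter. One could alternatively first produce a surjection $C(G_{\CC})\twoheadrightarrow C(G_0\tensorglued\widehat{\Z_k})$ by verifying that the $u_{ij}^{G_0}z$ satisfy the relations $R(p)$ for the generators of $\CC$ (routine: the central unitary $z$ cancels in every relation except the size generator, where one only needs $z^{k}=1$, hence $z^{k/2}=(z^*)^{k/2}$ resp.\ $s^k=1$), but I would still obtain injectivity from the intertwiner comparison above, because inside $C(G_{\CC})$ the factors $u_{ij}^{G_0}$ and $z$ cannot be recovered separately and so no inverse homomorphism is available by a direct construction.
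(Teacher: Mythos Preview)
Your approach is genuinely different from the paper's. You work on the Tannaka--Krein side: compute $\Mor_{G_0\tensorglued\widehat{\Z_k}}(v^r,v^s)$ directly from the centrality of $z$ and the self-adjointness of the $u_{ij}^{G_0}$, recognise the resulting intertwiner spaces as those of the category $\CC_{G_0}^{(k)}=\{p\in NC^{\twocol}:\Psi(p)\in\DD,\ k\mid c(p)\}$, and then invoke the classification of Theorem~\ref{ThmClassiGlob} to match $\CC_{G_0}^{(k)}$ with the named category via its case and its global parameter (plus the $\positionerwwbb$ test in case~$\BBB$). The paper instead stays on the $C^*$-algebra side throughout: after producing the surjection $\alpha:C(G_\CC)\to C^*(v_{ij}z)$ by checking the relations $R(p)$ on the elements $v_{ij}z$ (your ``alternative'' first step), it obtains injectivity not via intertwiners but by constructing an explicit left inverse $\beta:C(G_0)\otimes_{\max}C^*(\Z_k)\to M_t(C(G_\CC))$ into a matrix amplification, with $t\in\{1,2\}$. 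The image of $z$ is built from $\sum_l u_{il}$ when $t=1$, or from the auxiliary unitary $w=\sum_l u_{il}^2$ together with an off-diagonal $2\times 2$ matrix when $t=2$, and one checks $\beta\circ\alpha=\iota$, the diagonal embedding. Your route is more conceptual and uniform, and it explains structurally \emph{why} the answer is a tensor complexification; the paper's route is more hands-on but never appeals to the classification theorem and delivers the $C^*$-isomorphism directly.

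There is one point where your argument is incomplete. You propose to deduce injectivity of the surjection $C(G_\CC)\twoheadrightarrow C(G_0\tensorglued\widehat{\Z_k})$ from the equality of intertwiner spaces. But equal intertwiners only force the induced map on the dense Hopf $*$-subalgebras $\textnormal{Pol}(\cdot)$ to be an isomorphism; to lift this to a $C^*$-isomorphism in the sense of Definition~\ref{DefW}(a) you still need to know that the subalgebra $C^*(v_{ij}z)\subset C(G_0)\otimes_{\max}C^*(\Z_k)$ --- which is not itself defined by a universal property --- already coincides with the full $C^*$-envelope of its polynomial part (compare the discussion preceding Corollary~3.9). Your closing remark that ``no inverse homomorphism is available by a direct construction'' is precisely what the paper refutes: the matrix-amplification map $\beta$ does exactly this job, and it is how the paper secures the statement at the $C^*$-level without having to argue separately that the glued product sits in its full version.
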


\begin{rem}\label{RemConfusionWithProducts}
Comparing Remark \ref{Reminder} with Theorem \ref{ThmEasyGlob}, we observe that the relations of $\categ{\OOO}{\glob}{2}$ give rise to $O_n^+$, whereas Theorem \ref{ThmEasyGlob} yields  $O_n^+\tensorglued \widehat{\Z_2}$ in this case. The solution is simply that $O_n^+$ is isomorphic to $O_n^+\tensorglued \widehat{\Z_2}$, see Proposition \ref{OnIsom} below. The same happens for $\categ{\HHH}{\glob}{2}$ and $\categ{\BBB}{\glob}{2}$. In the cases of $\categ{\SSS}{\glob}{2}$ and $\categ{\BBB'}{\glob}{2}$, we recover the well-known facts ${S'_n}^+\cong S_n^+\tensorglued\widehat{\Z_2}$ and ${B'_n}^+\cong B_n^+\tensorglued\widehat{\Z_2}$ from \cite{raum} and \cite{weber2013classification}.
\end{rem}

In order to state the theorem in the locally colorized case, we have to define a new quantum group. It is a kind of a unitary version of $B_n^+$.
The credit for discovering it goes to the unpublished draft \cite{speicherunpublished} by Banica, Curran and Speicher.

\begin{defn}
Let $C_n^+$ be the quantum group given by the universal $C^*$-algebra generated by $u_{ij}$, $1\leq i,j\leq n$, such that $u$ and $\bar u$ are unitaries and $\sum_ku_{ik}=\sum_ku_{kj}=1$ for all $i,j$.
\end{defn}

It can be read directly from the relations in Section \ref{SectCStar} that $C_n^+$ is free easy with category $\categ{\BBB}{\loc}{1,0}$. The next theorem is based on the classification in  Theorem \ref{ThmClassiLoc}. Recall that $d\vert k$ stands for the fact that $k$ is a multiple of $d$. Thus, if $d=0$, then also $k=0$.

\begin{thm}[locally colorized case]\label{ThmEasyLoca}
The locally colorized categories give rise to the following quantum groups. the case $d=k$ includes $d=0$.

\begin{tabular}{lll}
$\categg{\OOO}{\loc}$: &$O_n^+\freeglued \widehat{\Z}=U_n^+$\\
 $\categg{\HHH'}{\loc}$: &$H_n^+\freeglued \widehat{\Z}$\\
 $\categ{\HHH}{\loc}{k,d}$: & $(\widehat{\Z_d}\wr_*S_n^+)\tensorglued \widehat{\Z_k}$, &$k,d\in\N_0\backslash\{1,2\}, d|k, d\neq0$\\
 & $\widehat{\Z_k}\wr_*S_n^+$ & ($d=k$)\\
$\categ{\SSS}{\loc}{k,d}$: &$(S_n^+\freeglued \widehat{\Z_d}) \tensorglued \widehat{\Z_k}$, & $k,d\in\N_0\backslash\{1\}, d|k, d\neq 0$\\
&$S_n^+\freeglued\widehat{\Z_k}$ & ($d=k$)\\
$\categ{\BBB}{\loc}{k,d}$: &$(C_n^+\freeglued \widehat{\Z_d}) \tensorglued \widehat{\Z_k}$, &
$k,d\in\N_0, d|k, d\neq 0$\\
& $C_n^+\freeglued \widehat{\Z_k}$ &  ($d=k$)\\
$\categ{\BBB'}{\loc}{k,d,0}$: &$(B_n^+\freeglued \widehat{\Z_d}) \tensorglued \widehat{\Z_k}$, &
$k,d\in\N_0\backslash\{1\}, d|k, d\neq 0$\\
& $B_n^+\freeglued \widehat{\Z_k}$ & ($d=k$)\\
 $\categ{\BBB'}{\loc}{k,d,r}$: &$(C_n^+\freeglued_r \widehat{\Z_d}) \tensorglued \widehat{\Z_k}$, &
$k\in\N_0\backslash\{1\}, d\in2\N_0\backslash\{0,2\}, d|k, r=\frac{d}{2}$\\
& $C_n^+\freeglued_r \widehat{\Z_k} $ & ($d=k$)\\
\end{tabular}

In the case $k=0$, we replace $\Z_k$ by $\Z$.
\end{thm}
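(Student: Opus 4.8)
The plan is to go through the seven families of Theorem~\ref{ThmClassiLoc} one at a time. For a locally colorized category $\CC$, write $G_\CC$ for the easy quantum group attached to it by Proposition~\ref{PropTK} and $G'$ for the product in the corresponding row of the table; we must show $G_\CC=G'$ in the sense of Definition~\ref{DefW}(a). This splits into two halves. First, a surjection $C(G_\CC)\surj C(G')$: by Remark~\ref{RemRelations} it suffices to check that the fundamental matrix of $G'$ and its conjugate are unitary and that the generators satisfy the relations $R(p)$ for the finitely many $p$ generating $\CC$ listed in Theorem~\ref{ThmClassiLoc}; via the explicit list of Section~\ref{SectCStar} and Lemma~\ref{LemTpRp} this is a direct computation. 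For instance, for $G'=(S_n^+\freeglued\widehat{\Z_d})\tensorglued\widehat{\Z_k}$ with fundamental matrix $v_{ij}=s_{ij}tz$ --- $s=(s_{ij})$ the magic unitary of $S_n^+$, $t$ of order $d$ free from it, $z$ of order $k$ commuting with both --- one has $\sum_l v_{il}=\sum_l v_{lj}=tz$, so the relations $R(\singletonw^{\otimes k})$, $R(\rot_{d+1}(\positionerd))$, $R(\rot_2(\vierpartwbwb))$ and $R(\rot_1(\singletonw\otimes\singletonb))$ all collapse to identities among $t^{d}=z^{k}=1$ and the magic--unitary relations. By easiness of $G_\CC$ this surjection is equivalent, via Definition~\ref{DefUnitaryEasy2}, to the inclusion $\CC\subseteq\CC'$ of categories, where $\CC'$ denotes the category of $G'$; so the task reduces to computing $\CC'$ and showing $\CC'\subseteq\CC$.

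The engine for this is a pair of lemmas on how the complexification constructions act on easy quantum groups. The first, and straightforward, one concerns the \emph{tensor $k$-complexification}: if $G$ is easy with category $\DD$, then $G\tensorglued\widehat{\Z_k}$ is easy with category $\{p\in\DD : c(p)\equiv 0 \bmod k\}$, where $c(p)$ is the difference between the numbers of white and black points of $p$. Writing $v_{ij}=u_{ij}z$ with $z$ the central order-$k$ generator of $C^*(\Z_k)$, a word $v^r$ in $v$ and $\bar v$ equals $u^r\cdot z^{c(r)}$ with $z^{c(r)}$ scalar-valued, so the $\Z_k$-grading of $C(G)\otimes_{\max}C^*(\Z_k)$ forces $T_p\in\Mor(v^r,v^s)$ precisely when $T_p\in\Mor(u^r,u^s)$ \emph{and} $z^{c(r)}=z^{c(s)}$, i.e. $c(p)=c(s)-c(r)\equiv 0\bmod k$ (using that $u^r$ is invertible, so $T_pu^r\neq 0$ whenever $T_p\neq 0$); linear independence of the $T_p$ for $n\gg 0$ then gives the stated category. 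This is exactly the mechanism producing the global parameter, carried in Theorems~\ref{ThmClassiGlob} and~\ref{ThmClassiLoc} by the generators $\singletonw^{\otimes k}$, $b_k$, $\paarpartww^{\otimes k/2}$, and it is the same lemma already underlying the globally colorized Theorem~\ref{ThmEasyGlob}.

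The second lemma --- the hard part of the argument --- is the analogue for the \emph{free $d$-complexification} $G\freeglued\widehat{\Z_d}$ and its $r$-selfadjoint variant $G\freeglued_r\widehat{\Z_d}$. Here $z$ no longer commutes with the $u_{ij}$, so there is no global $\Z_d$-grading; instead one must expand a word in the $v_{ij}=u_{ij}z$ inside the free product $C(G)*C^*(\Z_d)$, reduce it using freeness, and argue that an intertwiner must respect the $\Z_d$-grading \emph{at every nesting level} of the partition. The expected outcome is: $G\freeglued\widehat{\Z_d}$ is easy with the subcategory of those $p$ in the category of $G$ for which every subpartition lying between two consecutive points of one through-block has $c$-value divisible by $d$ --- precisely the condition measured by the local parameter $d(\CC)$ of \cite{tarragowebercombina} and encoded by the positioner generator $\positionerd$; for $G\freeglued_r\widehat{\Z_d}$ one additionally allows $c$-value $\equiv r\bmod d$ between two legs of the \emph{same} colour, which is what the generator $\positionerrpluseins$ with $r=d/2$ records. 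I expect this to be the main obstacle: it is a genuine free-probabilistic bookkeeping of how the two colours propagate through products in a free product algebra (equivalently, a direct computation of $\Mor$-spaces in $C(G)*C^*(\Z_d)$), and the $r$-selfadjoint case is the subtlest, since the relation $u_{ij}z^r=(z^*)^ru_{ij}^*$ couples the two grading conditions. A closely related computation is needed to check that the free wreath product $\widehat{\Z_d}\wr_*S_n^+$ is easy with category $\categ{\HHH}{\loc}{d,d}$; for finite $d$ this is the quantum reflection group $H_n^{d+}$ of Banica--Vergnioux (cf.\ \cite{banica2009fusion,lemeuxfusion}).

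With these two lemmas the proof is assembled from a few building blocks: $U_n^+=O_n^+\freeglued\widehat{\Z}$ (Banica's free complexification \cite{banica2008note,banica1999representations}, or simply the fact that $U_n^+$ has empty category $\categg{\OOO}{\loc}=\langle\emptyset\rangle$); $H_n^+\freeglued\widehat{\Z}$, which by the free-complexification lemma applied to $H_n^+$ (category $\langle\idpartwb,\vierpartwbwb\rangle$) has category $\categg{\HHH'}{\loc}=\langle\vierpartwbwb\rangle$; the free wreath product $\widehat{\Z_d}\wr_*S_n^+$ just discussed; $C_n^+$, easy with $\categ{\BBB}{\loc}{1,0}$ as read off directly from Section~\ref{SectCStar}; and $S_n^+$, $B_n^+$ with the categories recalled in Remark~\ref{Reminder}. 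For a general entry, say $\categ{\SSS}{\loc}{k,d}$ with $d\mid k$ and $d\neq 0$, one first applies the free-complexification lemma to $S_n^+$ to see that $S_n^+\freeglued\widehat{\Z_d}$ is easy with $\categ{\SSS}{\loc}{d,d}$ (the $d=k$ row), then the tensor-complexification lemma to conclude that $(S_n^+\freeglued\widehat{\Z_d})\tensorglued\widehat{\Z_k}$ is easy with $\{p\in\categ{\SSS}{\loc}{d,d}:c(p)\equiv 0\bmod k\}$, and finally checks, by comparing with the generators $\langle\singletonw^{\otimes k},\positionerd,\vierpartwbwb,\singletonw\otimes\singletonb\rangle$ of Theorem~\ref{ThmClassiLoc}, that this category equals $\categ{\SSS}{\loc}{k,d}$, the generator $\positionerd$ carrying the local $\bmod d$ condition and $\singletonw^{\otimes k}$ the global $\bmod k$ one. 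The rows for $\categ{\BBB}{\loc}{k,d}$, $\categg{\BBB'}{\loc}$ and $\categ{\HHH}{\loc}{k,d}$ run identically with $C_n^+$, $B_n^+$ and $\widehat{\Z_d}\wr_*S_n^+$ in place of $S_n^+$, and with $\freeglued_r$ replacing $\freeglued$ for the second $\categg{\BBB'}{\loc}$ row; the degenerate cases $k=0$ (forcing $d=0$ and $\widehat{\Z_0}=\widehat{\Z}$) and $d=k$ (no tensor factor) are exactly the conventions stated after the table.
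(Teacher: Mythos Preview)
Your approach is genuinely different from the paper's.

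The paper does \emph{not} attempt to compute the intertwiner category of the product quantum group $G'$. Instead it constructs the isomorphism $C(G_\CC)\cong C(G')$ directly at the $C^*$-algebra level, following a uniform scheme. The surjection $\alpha:C(G_\CC)\to C^*(v_{ij}s^{(i)}z)\subset A\square C^*(\Z_k)$ is obtained, as in your first half, by verifying the relations $R(p)$ for the generators $p$ of $\CC$. For injectivity, the paper builds a left inverse up to matrix amplification: a map $\beta:A\square C^*(\Z_k)\to M_t(C(G_\CC))$ (with $t=d$ in most cases) and an injective embedding $\iota:C(G_\CC)\to M_t(C(G_\CC))$ such that $\beta\circ\alpha=\iota$. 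The key trick is that a $d$-th root of the element $w=\sum_l u_{il}\in C(G_\CC)$ (or of $w_i^d$ in the $\HHH$ case) can be manufactured inside $M_d(C(G_\CC))$ as an explicit cyclic-shift matrix, and this root stands in for the free generator $s$ of $C^*(\Z_d)$. No intertwiner space of $G'$ is ever computed, and in particular one never needs to know in advance that $G'$ is easy.

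Your route --- compute the partition category of each of $G\tensorglued\widehat{\Z_k}$, $G\freeglued\widehat{\Z_d}$, $G\freeglued_r\widehat{\Z_d}$, $\widehat{\Z_d}\wr_*S_n^+$ and match it against Theorem~\ref{ThmClassiLoc} --- would, if carried out, yield more: it would show that these product constructions preserve easiness and describe exactly how they act on categories. Your tensor-complexification lemma is correct, and the $\Z_k$-grading argument you sketch does go through. But the free-complexification lemma is the heart of the matter and you have not proved it; you say as much (``I expect this to be the main obstacle'') and give only the expected shape of the answer. Establishing that $\Mor_{G\freeglued\widehat{\Z_d}}(v^r,v^s)$ is \emph{spanned} by the $T_p$ with the stated local colour constraints --- not merely that these $T_p$ lie in it --- is a genuine computation inside the free product $C(G)*C^*(\Z_d)$, and this is precisely the step the paper's matrix amplification is designed to bypass. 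Without it your argument remains an outline; the $r$-selfadjoint variant $\freeglued_r$, where the extra relation $u_{ij}z^r=(z^*)^ru_{ij}^*$ couples the two gradings, is harder still. The paper's concluding remarks do allude to free-probabilistic techniques for related identities among iterated complexifications, deferred to \cite{tarragoweberweingar}, so your instinct about the method is sound --- but the work required is substantial, and the paper chose the shorter algebraic route.
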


Before passing to the proofs, let us remark a few things.

\begin{rem}\label{RemXYZ}
The cases $d\neq 0$ and $d=k$ do not exclude each other -- we might have $d=k\neq 0$. We thus proved incidentally $(G\freeglued \widehat{\Z_k}) \tensorglued \widehat{\Z_k}\cong G\freeglued \widehat{\Z_k}$ etc. See also Proposition \ref{OnIsom} for a direct proof.
\end{rem}

\begin{rem}
In \cite{banicaUn}, Banica observed that $U_n^+$ can be written as a free complexification of $O_n^+$. In \cite{banica2008note}, he studied the free complexifications of  $H_n^+$ and $S_n^+$, denoting them by $K_n^+$ and $P_n^+$. The former one also appeared in \cite{banica2011free}.
All these quantum groups are free easy.
\end{rem}

\begin{rem}
In \cite{raum} and \cite{weber2013classification}, it is proven that the $C^*$-algebra $C(B_n^{\#+})$ is isomorphic to $C(B_n^+)*C^*(\Z_2)$. But this isomorphism does \emph{not} map $u_{ij}\mapsto u_{ij}z$, so it does \emph{not} provide an isomorphism of $B_n^{\#+}$ and $B_n^+\freeglued\widehat{\Z_2}$. In fact, the elements $u_{ij}z$ in $C(B_n^+\freeglued\widehat{\Z_2})$ are not selfadjoint, so $B_n^+\freeglued\widehat{\Z_2}$ happens to be no quantum subgroup of $O_n^+$ while $B_n^{\#+}$ is.
\end{rem}

\subsection{The proofs of Theorem \ref{ThmEasyGlob} and Theorem \ref{ThmEasyLoca}}

The proofs of Theorem \ref{ThmEasyGlob} and Theorem \ref{ThmEasyLoca}  obey the following uniform scheme, as now being sketched in a general version. Let $\CC\subset NC^{\twocol}$ be a category of noncrossing partitions and let $G$ be the associated free easy quantum group with $C^*$-algebra $C(G)$. Let $k$ and $d$ be the parameters of the category appearing in Theorems \ref{ThmEasyGlob} and \ref{ThmEasyLoca} (recall also Section \ref{SectCases}). Consider the following diagram:

\begin{align*}
M_t(C(G)) && &&\stackrel{\beta}{\longleftarrow}&& && A\square C^*(\Z_k)\\
&&v_{ij}',s', z' &&\mapsfrom && v_{ij}, s, z&&\\
&\\
\uparrow\iota && && && &&\upsubset\\
&\\
&& u_{ij} &&\mapsto &&u_{ij}':=v_{ij}sz\\
C(G) && &&\stackrel{\alpha}{\longrightarrow} && &&C^*(v_{ij}sz)
\end{align*}

In the above diagram, $\square\in\{*,\otimes_{\max}\}$, and the $C^*$-algebra $A$ is specified case by case. Typically, it is of the form $A=C(H)$ or $A=C(H)*C^*(\Z_d)$ where $C(H)$ is generated by elements $v_{ij}$ and $C^*(\Z_d)$ by $s$ (or $s=1$, if $A=C(H)$). The $C^*$-algebra $C^*(\Z_k)$ in turn is generated by $z$. Finally, we will have $t=d$ in most of the cases.

The general strategy is now:
\begin{itemize}
\item Using the universal property, we prove that $\alpha$ exists, mapping $u_{ij}\mapsto v_{ij}sz$. Here, we may usually trace back a relation $R(p)$ of $C(G)$ to exactly the same relation $R(p)$ of $C(H)$. It is clear that $\alpha$ is surjective.
\item As for finding a converse to $\alpha$, we would like to use the universal property of $A\square C^*(\Z_k)$ (see Remark \ref{RemUniv}). Unfortunately, we may not find all elements of this $C^*$-algebra in $C(G)$ itself. For instance, we might find a candidate for $z^d$ in $C(G)$ (implicitely using the fact that $(sz)^d=z^d$), but not $z$ itself. We thus somehow have to find a $d$-th root of $z^d$, which we do by passing to $M_d(C(G))$. We prove that $\beta$ exists with $\beta(v_{ij})=\iota(u_{ij})\beta(sz)^*$.
\item The map $\iota$ is supposed to be injective. If now the diagram commutes, then $\alpha$ is injective and we have proven the desired isomorphism. Hence we identified the two quantum groups in the sense of Definition \ref{DefW}(a).
\end{itemize}

\begin{proof}[Proof of Theorem \ref{ThmEasyGlob}]
Let $\CC(k)$ be one of the globally colorized categories, with $k\in\N_0$ and let $G$ be the associated quantum group. We use the uniform proof scheme as sketched above with the  parameters $s:=1$, $\square:=\otimes_{\max}$, and $t$ and $A$ as below. For the convenience of the reader, we recall the generators of the globally colorized categories from Theorem \ref{ThmClassiGlob} and  we also list all partitions that generate the category of partitions associated to $A$ (from Remark \ref{Reminder}).

\quad

\begin{tabular}{|l|l|l|l|l|l|}
\hline
$\CC(k)$ & $=\langle\paarpartww\otimes\paarpartbb  \textnormal{ \& those below}\rangle$ & $A$ &$\leftrightarrow\langle\idpartwb \textnormal{ \& those below}\rangle$&$t$\\
\hline
 $\categ{\OOO}{\glob}{k}$ & $\paarpartww^{\nest(\frac{k}{2})}$ & $C(O_n^+)$ &$\emptyset$ & 2\\
 $\categ{\HHH}{\glob}{k}$ & $b_k,\vierpartwbwb$ & $C(H_n^+)$ &$\vierpartwbwb$ & 2\\
 $\categ{\SSS}{\glob}{k}$ & $\singletonw^{\otimes k},\vierpartwbwb,\singletonw\otimes\singletonb$ & $C(S_n^+)$ &$\vierpartwbwb,\singletonw$ & 1\\
  $\categ{\BBB}{\glob}{k}$ & $\singletonw^{\otimes k},\singletonw\otimes\singletonb$ & $C(B_n^{\#+})$ &$\singletonw\otimes\singletonb$ &2\\
 $\categ{\BBB'}{\glob}{k}$ & $\singletonw^{\otimes k},\positionerwwbb,\singletonw\otimes\singletonb$ & $C(B_n^+)$ &$\singletonw$ & 1\\
\hline
\end{tabular}

\quad

Thus, $t=d(\CC)$ in the sense of \cite[Sect. 3.1, 4.1, 5.1, 6.1]{tarragowebercombina}. Recall that the restriction on the parameter $k$ in Theorem \ref{ThmEasyGlob} is exactly that $k$ is even in case $t=2$.

\emph{Step 1: The map $\alpha:C(G)\to A\otimes_{\max}C^*(\Z_k)$ exists.} 
It is straightforward to check that the elements $u_{ij}':=v_{ij}z\in A\otimes_{\max}C^*(\Z_k)$ give rise to unitary matrices $u':=(u_{ij}')$ and $\bar u':=(u_{ij}'^*)$ and that the relations $R(\paarpartww\otimes\paarpartbb)$ are fulfilled (recall that all necessary $C^*$-algebraic relations may be found in Section \ref{SectCStar}). Moreover, the relations $R(\paarpartww^{\nest(\frac{k}{2})})$ are fulfilled in the case $\CC(k)=\categ{\OOO}{\glob}{k}$. Here, we used that $z^\lambda=(z^*)^\lambda$ for $k=2\lambda$. Next, we check that the $u_{ij}'$ fulfill $R(\vierpartwbwb)$ and $R(b_k)$, if the $v_{ij}$ fulfill $R(\vierpartwbwb)$ (recall that the $v_{ij}$ are selfadjoint). Using the fact that the $v_{ij}^2$ are orthogonal projections adding up to 1 when the sum is taken over $j$, this is a direct computation. Similarly, we prove that $R(\singletonw\otimes\singletonb)$ and $R(\singletonw^{\otimes k})$ are fulfilled by the $u_{ij}'$, if $R(\singletonw\otimes\singletonb)$ is fulfilled by the $v_{ij}$ or maybe even $R(\singletonw)$. For doing so, use:
\[\left(\sum_{l_1}v_{l_1j_1}\right)\left(\sum_{l_2}v_{l_2j_2}\right)=\sum_{l_1}\left(v_{l_1j_1}\left(\sum_{l_2}v_{l_1l_2}\right)
\right)=\sum_{l_2}\sum_{l_1}v_{l_1j_1}v_{l_1l_2}=\sum_{l_2} \delta_{j_1l_2}=1\]
Finally, $R(\positionerwwbb)$ is fulfilled by the $u_{ij}'$ in case the $v_{ij}$ satisfy $R(\singletonw)$.

\emph{Step 2: The map $\beta: A\otimes_{\max}C^*(\Z_k)\to M_t(C(G))$ exists in the case $t=1$.} 
We put $z':=\sum_l u_{il}=\sum_l u_{lj}\in C(G)$ and check that it is independent of the choice of $i$ and $j$, since $\singletonw\otimes\singletonb\in\CC(k)$. Moreover it is unitary, since: 
\[\sum_l \left(u_{il}^*\left(\sum_m u_{ml}\right)\right)=\sum_m\sum_l u_{il}^*u_{ml}=\sum_m\delta_{im}=1\]
The relation $z'^k=1$ immediately follows from $\singletonw^{\otimes k}\in\CC(k)$.
We put $v_{ij}':=u_{ij}z'^*$ and check that it is selfadjoint since firstly, using $R(\paarpartww\otimes\paarpartbb)$ we have
\[u_{ij}z'^*=\sum_lu_{ij}u_{kl}^*=\sum_lu_{ij}^*u_{kl}=u_{ij}^*z'\]
and secondly we have $u_{ij}z'=z'u_{ij}$ because $\positionerwwbb\in\CC(k)$. This also proves $v_{ij}'z'=z'v_{ij}'=u_{ij}$.
Finally, check that $v_{ij}'$ satisfies $R(\singletonw)$ and $R(\vierpartwbwb)$ (the latter one in the case $\CC(k)=\categ{\SSS}{\glob}{k}$) from which we infer the existence of $\beta$. Note that we used $\singletonw^{\otimes k}$ only for proving $z'^k=1$, so the proof works also in the case $k=0$.

\emph{Step 3: The map $\beta: A\otimes_{\max}C^*(\Z_k)\to M_t(C(G))$ exists in the case $t=2$.} 
We cannot work with the same $z'$ as in Step 2, since this element will not commute with $v_{ij}'$ because $\positionerwwbb\notin\CC(k)$. We therefore consider the auxiliary element
$w:=\sum_l u_{il}^2=\sum_l u_{lj}^2\in C(G)$ and check that it is independent of the choice of $i$ or $j$ using $R(\paarpartww\otimes\paarpartbb)$ and:
\[\sum_{l}u_{il}^2=\sum_{l,m} u_{il}u_{il}u_{mj}^*u_{mj}=\sum_{l,m} u_{il}^*u_{il}u_{mj}u_{mj}=\sum_m u_{mj}^2\]
Similarly, we see that $w$ is unitary. Moreover, since $k$ is even in the case $t=2$, we may write $k=2\lambda$. Note that $\paarpartww^{\nest(\lambda)}$ is always in $\CC(k)$, since the permutation of colors  allows us to infer $\paarpartww^{\nest(\lambda)}\otimes \pi_{2\lambda}'\in\CC(k)$ from 
 $\paarpartwb^{\nest(\lambda)}\otimes \pi_{2\lambda}\in\CC(k)$, where $\pi_{2\lambda}$ is any partition in $P^{\twocol}(0,2\lambda)$ consisting only of white points and $\pi_{2\lambda}'\in P^{\twocol}(0,2\lambda)$ consists of $\lambda$ white points and $\lambda$ black points; now, by  \cite[Lemma 1.1(b)]{tarragowebercombina}, we have $\paarpartww^{\nest(\lambda)}\in\CC(k)$. Using $R(\paarpartww^{\nest(\lambda)})$, we may replace $\lambda$ many occurences of the $u_{ij}$ by $u_{ij}^*$, hence:
\[w^\lambda=\sum_{l_1,\ldots,l_\lambda}u_{i_1l_1}u_{i_1l_1}^*u_{i_2l_2}u_{i_2l_2}^*\ldots u_{i_\lambda l_\lambda}u_{i_\lambda l_\lambda}^*=1\]
Using $R(\paarpartww\otimes\paarpartbb)$, it is easy to see that $u_{ij}^*w=u_{ij}=wu_{ij}^*$.
We now define:
\[z':=\begin{pmatrix} 0 & w\\ 1 & 0\end{pmatrix}\in M_2(C(G)),\qquad v_{ij}':=\begin{pmatrix} 0 & u_{ij}\\ u_{ij}^* & 0\end{pmatrix}\in M_2(C(G))\]
It is clear that $z'$ is unitary with $z'^k=1$, the $v_{ij}'$ are selfadjoint and fulfill the relations $R(p)$ of $A$ whenever $p\in\CC(k)$. Finally, $v_{ij}'z'=z'v_{ij}'$ and hence $\beta$ exists. Again, this step also works in the case $k=0$.

\emph{Step 4: With $\iota: C(G)\to M_t(\C)\otimes C(G), x\mapsto 1\otimes x$ the diagram commutes.} 
Check that $v_{ij}'z'=\iota(u_{ij})$ in the cases $t=1$ and $t=2$. We conclude that $\alpha$ is an isomorphism.
\end{proof}

\begin{proof}[Proof of Theorem \ref{ThmEasyLoca}] \quad

\emph{Case 1.}
In the cases $\CC\in\{\categg{\OOO}{\loc},\categg{\HHH'}{\loc}\}$, we use the uniform scheme with $t=2$, $A\in \{C(O_n^+),C(H_n^+)\}$, $\square=*$, $k=0$ (i.e. $\Z_k=\Z$) and $s=1$. Putting
\[z':=\begin{pmatrix} 0&1\\1&0\end{pmatrix},\qquad v_{ij}':=\begin{pmatrix} 0&u_{ij}\\u_{ij}^*&0\end{pmatrix},\qquad \iota(u_{ij}):=\begin{pmatrix}u_{ij} &0\\0&u_{ij}^*\end{pmatrix}\]
we immediately see that $\alpha$ and $\beta$ exist and that the diagram is commutative, proving that $\alpha$ is an isomorphism. In fact, we could also have chosen $k=2$, thus we just proved $H_n^+\freeglued \Z\cong H_n^+\freeglued \Z_2$ and $O_n^+\freeglued \Z\cong O_n^+\freeglued \Z_2$, as a byproduct (see also Proposition \ref{OnIsom}).

\emph{Case 2.}
As for the other cases of $\CC$ we choose the following parameters, if $d\neq 0$.

\quad

\begin{tabular}{|l|l|l|}
\hline
$\CC(k,d)$ & $=\langle\ldots\rangle$ & $A$ \\
\hline
 $\categ{\SSS}{\loc}{k,d}$ & $\singletonw^{\otimes k}, \positionerd,\vierpartwbwb,\singletonw\otimes\singletonb$ & $C(S_n^+)*C^*(\Z_d)$\\
 $\categ{\BBB}{\loc}{k,d}$ & $\singletonw^{\otimes k}, \positionerd,\singletonw\otimes\singletonb$ & $C(C_n^+)*C^*(\Z_d)$\\
 $\categ{\BBB'}{\loc}{k,d,0}$ & $\singletonw^{\otimes k}, \positionerd,\positionerwbwb,\singletonw\otimes\singletonb$ & $C(B_n^+)*C^*(\Z_d)$\\
  $\categ{\BBB'}{\loc}{k,d,r}$ & $\singletonw^{\otimes k}, \positionerd,\positionerrpluseins,\singletonw\otimes\singletonb$ & $C(C_n^+)*C^*(\Z_d)/\langle v_{ij}s^r=s^rv_{ij}\rangle$\\
   $\categ{\HHH}{\loc}{k,d}$ & $b_k,b_d\otimes \tilde b_d,\vierpartwwbb,\vierpartwbwb$ & $C(\widehat{\Z_d}\wr_* S_n^+)$\\ 
 \hline
\end{tabular}

\quad

Furthermore, we put $t:=d$, $\square:=\otimes_{\max}$ and we let $z$ be the generator of $C^*(\Z_k)$ whereas $s$ is the generator of $C^*(\Z_d)$. In the case $\categ{\HHH}{\loc}{k,d}$, we refine the scheme replacing $s$ by the generators $s^{(i)}$ of the copies of $C^*(\Z_d)$ in $C(\widehat{\Z_d}\wr_* S_n^+)$. Let us always write $s^{(i)}$ in the sequel, meaning $s^{(i)}=s$ if $\CC(k,d)\neq \categ{\HHH}{\loc}{k,d}$.

The existence of $\alpha:C(G)\to A\otimes_{max} C^*(\Z_k)$ mapping $u_{ij}\mapsto u_{ij}':=v_{ij}s^{(i)}z$ is a straightforward calculation.

As for $\beta$, we proceed as follows.
If $\CC(k,d)\neq \categ{\HHH}{\loc}{k,d}$ we put $w:=\sum_l u_{il}=\sum_l u_{lj}$ and we use $\singletonw\otimes\singletonb\in\CC(k,d)$ in order to prove that this definition is independent of $i$ and $j$. Moreover, an easy computation shows that $w$ is unitary with $w^k=1$. If $\CC(k,d)= \categ{\HHH}{\loc}{k,d}$ the elements $w_i:=\sum_l u_{il}$ do depend on $i$, but they are still unitary with $w_i^k=1$. Using $R(\vierpartwwbb)$ and $R(b_d\otimes\tilde b_d)$, we may show that $w_i^d=\sum_lu_{il}^d=\sum_l u_{lj}^d$, hence the $d$-th powers of the $w_i$ coincide. We then put, in all five cases of $\CC(k,d)$
\[z':=\begin{pmatrix} 0&1& &\\&\ddots&\ddots&\\&&\ddots&1\\w_i^d&&&0\end{pmatrix},\qquad
s_i':=\begin{pmatrix} 0&& &(w_i^*)^{d-1}\\w_i&\ddots&&\\&\ddots&\ddots&\\&&w_i&0\end{pmatrix}\]
and $\iota(u_{ij}):=1\otimes u_{ij}$, $v_{ij}':=\iota(u_{ij}w_i^*)$ with  $w_i=w$ and $s'=s_i'$ in the case  $\CC(k,d)\neq \categ{\HHH}{\loc}{k,d}$. We check that $z'$ and $s_i'$ are unitaries with $z'^k=s_i'^d=1$. Recall that $d$ is a divisor of $k$. Moreover, we have $z'^d=\iota(w_i^d)$, $\iota(w_i)=z's_i'=s_i'z'$ and $u_{ij}w_i^d=w_i^du_{ij}$ in all cases. Additionally, we have $u_{ij}w_i^*=w_iu_{ij}^*$, if $\vierpartwbwb$ or $\positionerwbwb$ is in $\CC(k,d)$. Finally, $u_{ij}w_i=w_iu_{ij}$, if $\vierpartwwbb\in\CC(k,d)$.
The existence of $\beta$ with $\beta\circ \alpha=\iota$ is now a routine check.

\emph{Case 3.} Now let $\CC$ be as in Case 2 but with $d=k$ (including $d=0$). It is clear from the combinatorics that $\CC(k,k)=\CC(k,0)$, see Theorem \ref{ThmClassiLoc}.
If $d=0$, we use $t=1$ and  the following table for the choice of $B:=A\square C^*(\Z_k)$:

\quad

\begin{tabular}{|l|l|}
\hline
$\CC(k,0)$ &$B:=A\square C^*(\Z_k)$ \\
\hline
 $\categ{\SSS}{\loc}{k,0}$ & $C(S_n^+)*C^*(\Z_k)$\\
 $\categ{\BBB}{\loc}{k,0}$ &  $C(C_n^+)*C^*(\Z_k)$\\
 $\categ{\BBB'}{\loc}{k,0,0}$ &  $C(B_n^+)*C^*(\Z_k)$\\
 $\categ{\BBB'}{\loc}{k,0,\frac{k}{2}}$ &  $C(C_n^+)*C^*(\Z_k)/\langle v_{ij}s^r=s^rv_{ij}\rangle$\\
 $\categ{\HHH}{\loc}{k,0}$ &  $C(\widehat{\Z_k}\wr_* S_n^+)$\\ 
 \hline
\end{tabular}

\quad

The existence of $\alpha$ is as before, while $\beta: B\to C(G)$ maps the generator $s$ of $C^*(\Z_k)$ (or rather $s^{(i)}$) to $\sum_l u_{il}$ and $v_{ij}$ to $u_{ij}\left(\sum_l u_{il}^*\right)$.
\end{proof}

 We now take up the statements of Remarks \ref{RemConfusionWithProducts} and \ref{RemXYZ} and the one at the end of Case 1 of the proof of Theorem \ref{ThmEasyLoca}.
We prove them using again the scheme of the proofs of Theorems \ref{ThmEasyGlob} and \ref{ThmEasyLoca}.

\begin{prop}\label{OnIsom}
We have the following isomorphisms (in the sense of Def. \ref{DefW}(a)):
\begin{itemize}
\item[(a)] $O_n^+\cong O_n^+\tensorglued\widehat{\Z_2}$, $H_n^+\cong H_n^+\tensorglued\widehat{\Z_2}$  and $B_n^{\#+}\cong B_n^{\#+}\tensorglued\widehat{\Z_2}$.
\item[(b)] $H_n^+\freeglued \widehat{\Z}\cong H_n^+\freeglued\widehat{\Z_2}$ and $O_n^+\freeglued \widehat{\Z}\cong O_n^+\freeglued\widehat{\Z_2}$.
\item[(c)] $(\widehat{\Z_k}\wr_*S_n^+)\tensorglued \widehat{\Z_k}\cong \widehat{\Z_k}\wr_*S_n^+$, $(C_n^+\freeglued_r \widehat{\Z_k}) \tensorglued \widehat{\Z_k}\cong C_n^+\freeglued_r \widehat{\Z_k}$ and $(G\freeglued \widehat{\Z_k}) \tensorglued \widehat{\Z_k}\cong G\freeglued \widehat{\Z_k}$ for $G\in\{S_n^+, C_n^+, B_n^+\}$.
\end{itemize}
\end{prop}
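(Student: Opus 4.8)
The plan is to derive all three isomorphisms from the uniform $\alpha$--$\beta$--$\iota$ scheme used in the proofs of Theorems~\ref{ThmEasyGlob} and~\ref{ThmEasyLoca}: one exhibits a surjection $\alpha$ from the ``smaller'' quantum group algebra onto the ``larger'' one, a $\ast$-homomorphism $\beta$ into a matrix algebra $M_t(C(G))$, and the injective embedding $\iota\colon C(G)\to M_t(\C)\otimes C(G)$, and checks $\beta\circ\alpha=\iota$; this forces $\alpha$ to be injective, hence an isomorphism, and since by construction $\alpha$ sends fundamental generators to fundamental generators it is an identification in the sense of Definition~\ref{DefW}(a). For part~(a) I would run the scheme with $t=2$, $\square=\otimes_{\max}$, $\widehat{\Z_k}=\widehat{\Z_2}$, $s=1$ and $A=C(G)$ for $G\in\{O_n^+,H_n^+,B_n^{\#+}\}$, in the spirit of Step~3 of the proof of Theorem~\ref{ThmEasyGlob}. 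As the generator $z$ of $C^*(\Z_2)$ is a central self-adjoint order-two unitary, the elements $u_{ij}z$ are again self-adjoint, $u$-orthogonal, and --- in the $B_n^{\#+}$-case --- have row and column sums independent of the indices; this yields $\alpha\colon C(G)\to C(G)\otimes_{\max}C^*(\Z_2)$, $u_{ij}\mapsto u_{ij}z$, which is onto $C^*(u_{ij}z)=C(G\tensorglued\widehat{\Z_2})$. Conversely, putting $z':=\left(\begin{smallmatrix}0&1\\1&0\end{smallmatrix}\right)$ and $v_{ij}':=\left(\begin{smallmatrix}0&u_{ij}\\u_{ij}&0\end{smallmatrix}\right)$ in $M_2(C(G))$, one checks that $z'$ is a self-adjoint order-two unitary commuting with every $v_{ij}'$, that the $v_{ij}'$ satisfy the defining relations of $G$, and that $v_{ij}'z'=\mathrm{diag}(u_{ij},u_{ij})=\iota(u_{ij})$ with $\iota(x)=1\otimes x$; this gives $\beta$ with $\beta\circ\alpha=\iota$, the computations being the evident $2\times2$-block versions of those in Step~3.

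For part~(b) I would re-run Case~1 of the proof of Theorem~\ref{ThmEasyLoca} verbatim, only replacing $\widehat{\Z}$ by $\widehat{\Z_2}$: the map $\alpha$ into $C(O_n^+)\ast C^*(\Z_2)$ (resp.\ $C(H_n^+)\ast C^*(\Z_2)$) still exists, since $u_{ij}z$ satisfies $R(\vierpartwbwb)$ in the $H_n^+$-case and no further relation in the $O_n^+$-case, and the $\beta$ constructed there already sends $z$ to a matrix of order two, hence factors through $C^*(\Z_2)$. Thus $O_n^+\freeglued\widehat{\Z}$ and $O_n^+\freeglued\widehat{\Z_2}$ are both identified with the free easy quantum group of category $\categg{\OOO}{\loc}$ (which is $U_n^+$), and $H_n^+\freeglued\widehat{\Z}$, $H_n^+\freeglued\widehat{\Z_2}$ with the one of category $\categg{\HHH'}{\loc}$; composing these identifications gives the claim.

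For part~(c) the key point is that, for $d=k\neq0$, each side of the asserted isomorphism is just one of the two product descriptions of a single free easy quantum group --- the one attached to $\categ{\SSS}{\loc}{k,k}$, $\categ{\BBB}{\loc}{k,k}$, $\categ{\BBB'}{\loc}{k,k,0}$, $\categ{\BBB'}{\loc}{k,k,k/2}$ or $\categ{\HHH}{\loc}{k,k}$. Applying Case~2 of the proof of Theorem~\ref{ThmEasyLoca} with $d=k$ gives a generator-preserving $\ast$-isomorphism from that algebra onto $(G\freeglued\widehat{\Z_k})\tensorglued\widehat{\Z_k}$, resp.\ onto $(\widehat{\Z_k}\wr_\ast S_n^+)\tensorglued\widehat{\Z_k}$, resp.\ $(C_n^+\freeglued_r\widehat{\Z_k})\tensorglued\widehat{\Z_k}$, while the $d=0$ part of Case~3, applied to the same category $\categ{\SSS}{\loc}{k,0}=\categ{\SSS}{\loc}{k,k}$ etc., gives one onto $G\freeglued\widehat{\Z_k}$, resp.\ $\widehat{\Z_k}\wr_\ast S_n^+$, resp.\ $C_n^+\freeglued_r\widehat{\Z_k}$; composing the first with the inverse of the second yields the identification. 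The residual degenerate case $k=0$ I would treat by a direct $t=1$ application of the scheme, using $\sum_l u_{il}=\sum_l u_{lj}$ as the auxiliary unitary, as in the $d=0$ treatment of Case~3.

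The only genuine work is bookkeeping --- verifying that each $\alpha$ and each $\beta$ respects every relevant relation (unitarity and $u$-orthogonality, the row/column-sum conditions, and, where needed, $R(\vierpartwbwb)$, $R(\positionerwbwb)$, or $R(\positionerrpluseins)$). The one point deserving care, and the likeliest place for a gap, is part~(c): one must make sure the isomorphisms extracted from Cases~2 and~3 are genuinely generator-preserving, so that their composite is an identification in the sense of Definition~\ref{DefW}(a) rather than merely an abstract $\ast$-isomorphism. There is no conceptual obstacle; the apparatus built for Theorems~\ref{ThmEasyGlob} and~\ref{ThmEasyLoca} already does the heavy lifting.
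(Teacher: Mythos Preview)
Your treatment of part~(a) matches the paper's exactly: the same $2\times 2$ matrices $v_{ij}'$, $z'$, and $\iota$ are used, and the paper makes the same remark that self-adjointness of $v_{ij}z$ requires $z=z^*$.

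For parts~(b) and~(c), however, you take a genuinely different route. You argue indirectly: both sides of each claimed isomorphism have already been identified, in the proofs of Theorems~\ref{ThmEasyGlob} and~\ref{ThmEasyLoca}, with the same free easy quantum group, and you compose these identifications. This is correct --- indeed the paper itself records these facts as byproducts (at the end of Case~1 of the proof of Theorem~\ref{ThmEasyLoca} and in Remark~\ref{RemXYZ}). But the paper's proof of Proposition~\ref{OnIsom} is meant to be \emph{direct}, bypassing the easy quantum group in the middle. For~(b), the paper constructs $\alpha\colon C(G)\ast C^*(\Z)\to C(G)\ast C^*(\Z_2)$ sending $u_{ij}\mapsto u_{ij}$ and $z\mapsto z$, together with an explicit $\beta\colon C(G)\ast C^*(\Z_2)\to M_2(C(G)\ast C^*(\Z))$ sending $z$ to $\left(\begin{smallmatrix}0&z^*\\z&0\end{smallmatrix}\right)$ and $u_{ij}$ to $\left(\begin{smallmatrix}0&u_{ij}\\u_{ij}&0\end{smallmatrix}\right)$, and an $\iota$ sending $z$ to $\left(\begin{smallmatrix}z&0\\0&z^*\end{smallmatrix}\right)$; one then checks $\beta\circ\alpha=\iota$ on $C^*(u_{ij}z)$. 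For~(c), it builds $\alpha\colon A\to A\otimes C^*(\Z_k)$ by $u_{ij}\mapsto u_{ij}$, $s^{(i)}\mapsto s^{(i)}z$, and a $\beta\colon A\otimes C^*(\Z_k)\to M_k(A)$ using explicit $k\times k$ cyclic-shift matrices for $s^{(i)}$ and $z$. The advantage of the paper's approach is that it is self-contained: it does not rely on the combinatorial classification of categories, and the explicit formulas make sense for any compact matrix quantum group $G$ of the appropriate type, not just the easy ones. Your indirect argument, by contrast, is shorter and leverages work already done, but is logically downstream of Theorems~\ref{ThmEasyGlob}--\ref{ThmEasyLoca}; your caveat about checking that the composite in~(c) is generator-preserving is well taken, and is indeed the only thing to verify.
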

\begin{proof}
(a) Using the proof scheme presented at the beginning of this subsection with the parameters $G\in\{O_n^+,H_n^+, B_n^{\#+}\}$, $A\square C^*(\Z_k)=C(G)\otimes_{\max} C^*(\Z_2)$, $t=2$, \[v_{ij}'=\begin{pmatrix}0&u_{ij}\\u_{ij}&0\end{pmatrix},\qquad,z':=\begin{pmatrix}0&1\\1&0\end{pmatrix}, \qquad \iota(u_{ij})=\begin{pmatrix}u_{ij}&0\\0&u_{ij}\end{pmatrix}\]
we obtain the result. For the existence of $\alpha$ it is crucial that the $v_{ij}z$ are selfadjoint, hence we need to have $z=z^*$. Thus, the proof does not carry over for general $\Z_k$. Furthermore, we have $\sum_j v_{ij}z=z\neq 1$, so the proof does not work for $G=S_n^+$ or $G=B_n^+$.

(b) Check that for $G\in\{O_n^+,H_n^+\}$ the maps
\begin{align*}
\alpha: C(G)*C^*(\Z)&\to C(G)*C^*(\Z_2)\\
u_{ij}&\mapsto u_{ij}\\
z&\mapsto z
\end{align*}
\begin{align*}
\beta:C(G)*C^*(\Z_2) &\to M_2(C(G)*C^*(\Z)) \\
 u_{ij}&\mapsto \begin{pmatrix}0&u_{ij}\\u_{ij}&0\end{pmatrix}\\
z&\mapsto \begin{pmatrix}0&z^*\\z&0\end{pmatrix}
\end{align*}
\begin{align*}
\iota: C(G)*C^*(\Z) &\to M_2(C(G)*C^*(\Z))\\
u_{ij}&\mapsto \begin{pmatrix}u_{ij}&0\\0&u_{ij}\end{pmatrix}\\
z&\mapsto \begin{pmatrix}z&0\\0&z^* \end{pmatrix}
\end{align*}
exist, with $\iota$ being injective and $\iota(u_{ij}z)=\beta(\alpha(u_{ij}z))$.

(c) Again we use a version of the general proof scheme showing that the following maps exist for $A=C(G)*C^*(\Z_k)$, $A=C(C_n^+)*C^*(\Z_k)/\langle u_{ij}s^r=s^ru_{ij}\rangle$ or $A=C^*(\Z_k)^{*n}*C(S_n^+)/\langle u_{ij}s^{(i)}=s^{(i)}u_{ij}\rangle$.
\begin{align*}
\alpha:A&\to A\otimes C^*(\Z_k)\\
u_{ij}&\mapsto u_{ij}\\
s^{(i)}&\mapsto s^{(i)}z
\end{align*}
\begin{align*}
\beta:A\otimes C^*(\Z_k)&\to M_k(A) \\
 u_{ij}&\mapsto \iota(u_{ij})\\
s^{(i)}&\mapsto\begin{pmatrix} 0&& &s^{(i)}\\s^{(i)}&\ddots&&\\&\ddots&\ddots&\\&&s^{(i)}&0\end{pmatrix}\\
z&\mapsto\begin{pmatrix} 0&1& &\\&\ddots&\ddots&\\&&\ddots&1\\1&&&0\end{pmatrix}
\end{align*}
\begin{align*}
\iota: A &\to M_k(A)\\
x&\mapsto 1\otimes x
\end{align*}
\end{proof}

\subsection{The quantum reflection groups $H_n^{s+}$}\label{SectRefl}

The quantum reflection groups $H_n^{s+}$ were defined by Banica, Belinschi, Capitaine and Collins in \cite[Sect. 11]{banica2011free}.

\begin{defn}\label{DefReflectionQG}
Given $n,s\in\N$, the \emph{quantum reflection group} $H_n^{s+}$ is given by the universal $C^*$-algebra generated by elements $u_{ij}$, $1\leq i,j\leq n$ subject to the conditions:
\begin{itemize}
\item $u=(u_{ij})$ and $\bar u=(u_{ij}^*)$ are unitaries
\item all $u_{ij}$ are partial isometries (i.e. $u_{ij}u_{ij}^*u_{ij}=u_{ij}$)  and the projections $u_{ij}^*u_{ij}$ and $u_{ij}u_{ij}^*$ coincide
\item $u_{ij}^s=u_{ij}u_{ij}^*$
\end{itemize}
We define $H_n^{\infty +}$ by omitting the third of the above conditions.
\end{defn}

We have  $H_n^{1+}=S_n^+$, $H_n^{2+}=H_n^+$ and  $H_n^{s+}=\widehat{\Z_s} \wr_* S_n^+$ by \cite[Thm. 3.4]{banica2009fusion}. Hence, they are free easy, by Theorem \ref{ThmEasyLoca}. As an alternative proof, check that the categories found in \cite{banica2009fusion} in order to describe the fusion rules of $H_n^{s+}$ are exactly $\categ{\HHH}{\loc}{s,0}=\langle b_s,\vierpartwwbb,\vierpartwbwb\rangle$. 

As a third proof, check that the universal $C^*$-algebra $A$ of Definition \ref{DefReflectionQG} is isomorphic to the $C^*$-algebra $B$ generated by elements $v_{ij}$ and the relations $R(b_s)$, $R(\vierpartwwbb)$, $R(\vierpartwbwb)$ and those making $v$ and $\bar v$ unitaries, mapping generators to generators. 
Indeed, fixing $k$, the projections $u_{ik}u_{ik}^*\in A$ are orthogonal since they sum up to one. Hence they fulfill $R(\vierpartwbwb)$ and also $R(\vierpartwwbb)$, because for $i\neq j$:
\[u_{ik}u_{jk}=u_{ik}u_{ik}^*u_{ik}u_{jk}u_{jk}^*u_{jk}=u_{ik}u_{ik}u_{ik}^*u_{jk}u_{jk}^*u_{jk}=0\]
Moreover we have $R(b_s)$, since
\[\sum_l u_{i_1l}\ldots u_{i_sl}=\delta_{i_1i_2}\delta_{i_2i_3}\ldots \delta_{i_{s-1}i_s}\sum_l u_{i_1l}^s\]
as a consequence of $R(\vierpartwwbb)$. Now, $\sum_l u_{i_1l}^s=\sum_l u_{i_1l}u_{i_1l}^*=1$.
Conversely, $v_{ij}^{s-1}=v_{ij}^*$ in $B$, as a consequence of $R(\rot_1(b_s))$. Hence, $v_{ij}^*v_{ij}=v_{ij}^s=v_{ij}v_{ij}^*$. Furthermore, use $R(\vierpartwbwb)$ to show:
\[v_{ij}^*v_{ij}=\sum_k v_{ij}^*v_{ik}v_{ik}^*v_{ij}=v_{ij}^*v_{ij}v_{ij}^*v_{ij}\]

\section{Unitary easy groups}\label{SectUnitaryGroups}

Treating the case of groups in the setting of easy quantum groups brings us back to the very beginning of Woronowicz's Tannaka-Krein machine for compact matrix quantum groups: We now recall Schur-Weyl duality. This section can be read as an introduction to easy quantum groups by someone not familiar with quantum groups or operator algebras.

Let $U_n\subset M_n(\C)$ be the group of unitary matrices. It acts naturally on the Hilbert space $(\C^n)^{\otimes k}$ by $\pi:U_n\to B((\C^n)^{\otimes k})$ defined by $\pi(u):=u^{\otimes k}$, i.e.: \[\pi(u)(e_{i_1}\otimes \ldots\otimes e_{i_k})=
ue_{i_1}\otimes \ldots\otimes ue_{i_k}=
\sum_{j_1,\ldots,j_k} (u_{j_1i_1}\ldots u_{j_ki_k})(e_{j_1}\otimes\ldots\otimes e_{j_k})\]

The permutation group $S_k$ in turn acts on $(\C^n)^{\otimes k}$ by $\rho:S_k\to B((\C^n)^{\otimes k})$  given by:
\[\rho(\sigma)(e_{i_1}\otimes \ldots\otimes e_{i_k})=
e_{i_{\sigma(1)}}\otimes \ldots\otimes e_{i_{\sigma(k)}}\]

Schur-Weyl duality now states that the commutant of $\spanlin \pi(U_n)$ is $\spanlin \rho(S_k)$ and vice versa. In other words:
\[\Mor(u^{\otimes k},u^{\otimes k}):=\{T:(\C^n)^{\otimes k}\to (\C^n)^{\otimes k} \textnormal{ linear }\;|\; Tu^{\otimes k}=u^{\otimes k}T\}=\spanlin \rho(S_k)\]

Rephrasing this fact in the language of this article, we first observe that to each permutation $\sigma\in S_k$, we may associate a pair partition $p_{\sigma}\in P_2^{\twocol}(k,k)$ connecting the $i$-th upper point with the $j$-th lower point if and only if $\sigma(i)=j$. All points of $p_{\sigma}$ are white. 

\begin{ex}\quad

\setlength{\unitlength}{0.5cm}
\begin{center}
\begin{picture}(16,2)
\put(0,1){$\sigma=\begin{pmatrix} 1&2&3&4&5\\2&4&1&3&5\end{pmatrix}$}
\put(10,1){$p_{\sigma}=$}
\put(12,0){$\circ$}
\put(13,0){$\circ$}
\put(14,0){$\circ$}
\put(15,0){$\circ$}
\put(16,0){$\circ$}
\put(12,2.4){$\circ$}
\put(13,2.4){$\circ$}
\put(14,2.4){$\circ$}
\put(15,2.4){$\circ$}
\put(16,2.4){$\circ$}
\put(12.3,0.4){\line(1,1){2}}
\put(12.3,2.4){\line(1,-2){1}}
\put(13.3,2.4){\line(1,-1){2}}
\put(14.3,0.4){\line(1,2){1}}
\put(16.2,0.4){\line(0,1){2}}
\end{picture}
\end{center}
\end{ex}

Now, let $\CC=\langle\crosspartwwww\rangle$ be the category of partitions generated by $\crosspartwwww$ and the base partitions $\paarpartwb$ and $\idpartww$ (see Section \ref{SectCateg}). We then have:
\[\{p\in\CC(k,k) \textnormal{ all points are white}\}=\{p_{\sigma}\in P_s^{\twocol}(k,k)\;|\; \sigma\in S_k\}\]

Moreover, we have $\rho(\sigma)=T_{p_{\sigma^{-1}}}$ with the definition of $T_{p_{\sigma^{-1}}}$ as in Section \ref{SectUnitaryEasy2} and thus:
\[\Mor(u^{\otimes k},u^{\otimes k})=\spanlin\{T_p\;|\; p\in\CC(k,k)\textnormal{ all points are white}\}\]

This proves that the intertwiner spaces $\Mor(u^{\otimes k},u^{\otimes k})$ of $U_n$ are given by $\CC(k,k)$, in the sense of easy quantum groups. The question is now: Let $G\subset U_n\subset M_n(\C)$ be a subgroup and let it act on $(\C^n)^{\otimes k}$ via $\pi(u):=u^{\otimes k}$. How can $\Mor(u^{\otimes k},u^{\otimes k})$ be described, how can we write
\[\Mor(u^{\otimes k},u^{\otimes k})=\spanlin\{T_p\;|\; p\in\CC(k,k)\textnormal{ all points are white}\}\]
for a suitable set $\CC(k,k)$? This is exactly the starting point for Tannaka-Krein and its quantum version leading to easy quantum groups.

Let us now describe all unitary easy groups.
In \cite{tarragowebercombina}, the categories \emph{in the group case}, i.e. those containing the \emph{crossing partition} $\crosspartwwww\in P^{\twocol}(2,2)$ are classified. (Equivalently, $\crosspartwbbw$ etc. is in the category.) They are listed in the following theorem.

\begin{thm}[Thm 8.3 \cite{tarragowebercombina}]
The categories in the group case are the following.
\begin{itemize}
\item $\categ{\OOO}{\grp,\glob}{k}=\langle\paarpartww^{\otimes \nest(\frac{k}{2})},\paarpartww\otimes\paarpartbb,\crosspartwwww\rangle$ for $k\in 2\N_0$
\item $\categg{\OOO}{\grp,\loc}=\langle\crosspartwwww\rangle$
\item $\categ{\HHH}{\grp,\glob}{k}=\langle b_k,\vierpartwbwb,\paarpartww\otimes\paarpartbb,\crosspartwwww\rangle$ for $k\in 2\N_0$
\item $\categ{\HHH}{\grp,\loc}{k,d}=\langle b_k,b_d\otimes\tilde b_d,\vierpartwbwb,\crosspartwwww\rangle$ for $k,d\in\N_0\backslash\{1,2\}$, $d\vert k$
\item $\categ{\SSS}{\grp,\glob}{k}=\langle \singletonw^{\otimes k},\vierpartwbwb,\singletonw\otimes\singletonb,\paarpartww\otimes\paarpartbb,\crosspartwwww\rangle$ for $k\in \N_0$
\item $\categ{\BBB}{\grp,\glob}{k}=\langle\singletonw^{\otimes k}, \singletonw\otimes\singletonb,\paarpartww\otimes\paarpartbb,\crosspartwwww\rangle$ for $k\in 2\N_0$
\item $\categ{\BBB}{\grp,\loc}{k}=\langle \singletonw^{\otimes k}, \singletonw\otimes\singletonb,\crosspartwwww\rangle$ for $ k\in \N_0$ 
\end{itemize}
\end{thm}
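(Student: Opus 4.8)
The plan is to reduce this classification to the classification of noncrossing categories (Theorems~\ref{ThmClassiGlob} and~\ref{ThmClassiLoc}) by means of one structural observation: \emph{a category of partitions containing $\crosspartwwww$ is already generated by its noncrossing part together with $\crosspartwwww$.} Everything else is a bookkeeping check.

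First I would prove this reduction lemma. If $\crosspartwwww\in\CC$, then on the one hand $\CC\cap NC^{\twocol}$ is again a category of noncrossing partitions, since tensor product, composition, involution and rotation all preserve the noncrossing property and the base partitions $\paarpartwb,\idpartww$ are noncrossing; hence $\CC\cap NC^{\twocol}$ is one of the categories of Theorems~\ref{ThmClassiGlob} and~\ref{ThmClassiLoc}. On the other hand, since $\idpartwb,\idpartbw\in\CC$ always holds, composing $\crosspartwwww$ with tensor products of the mixed identity partitions recolours it freely, so every two-coloured version of the crossing lies in $\CC$; tensoring with identity partitions then places all ``permutation partitions'' $p_\sigma$, $\sigma\in S_k$, with any admissible colouring, into $\CC$. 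Given an arbitrary $p\in\CC(k,l)$, one reorders the points of each of the two rows so that the points of each block become consecutive; the resulting partition $p'$ is noncrossing, and $p=p_\sigma\,p'\,p_\tau$ for suitable $p_\sigma,p_\tau\in\CC$, so that $p'=p_{\sigma^{-1}}\,p\,p_{\tau^{-1}}\in\CC\cap NC^{\twocol}$ and $p\in\langle\CC\cap NC^{\twocol},\crosspartwwww\rangle$. This yields $\CC=\langle\CC\cap NC^{\twocol},\crosspartwwww\rangle$.

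It then remains to compute $\langle\DD,\crosspartwwww\rangle$ for $\DD$ running through the noncrossing list and to read off which ones are genuinely new. I would do this using the $C^*$-relations of Section~\ref{SectCStar}: adjoining $\crosspartwwww$ (equivalently $R(\crosspartwwww)$ and $R(\crosspartwbbw)$) forces the $u_{ij}$ to commute with one another and with all $u_{kl}^*$, which makes the positioner-type generators $\positionerd,\positionerwwbb,\positionerwbwb,\positionerrpluseins$ redundant once $\singletonw\otimes\singletonb$ or $\paarpartww\otimes\paarpartbb$ is present, while $\vierpartwwbb$ becomes available from $\vierpartwbwb$ simply by reordering the four points of its block. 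Running through the families, one finds that $\categ{\OOO}{\glob}{k}$, $\categg{\OOO}{\loc}$, $\categ{\HHH}{\glob}{k}$, $\categ{\HHH}{\loc}{k,d}$, $\categ{\SSS}{\glob}{k}$, $\categ{\BBB}{\glob}{k}$ and $\categ{\BBB}{\loc}{k,0}$ give, after adjoining $\crosspartwwww$, precisely the seven categories in the statement, whereas the remaining families collapse onto these: $\categg{\HHH'}{\loc}$ onto $\categ{\HHH}{\grp,\loc}{0,0}$; $\categ{\SSS}{\loc}{k,d}$ onto $\categ{\SSS}{\grp,\glob}{k}$ (here one regains $\paarpartww\otimes\paarpartbb$ by disconnecting the block of $\vierpartwbwb$ into four singletons, reordering them to the pattern $\circ\circ\bullet\bullet$, and reconnecting equally coloured neighbours with the now-available $\vierpartwwbb$); $\categ{\BBB}{\loc}{k,d}$ with $d\neq0$, $\categ{\BBB'}{\loc}{k,d,0}$ and $\categ{\BBB'}{\loc}{k,d,\frac{d}{2}}$ onto $\categ{\BBB}{\grp,\loc}{k}$; and $\categ{\BBB'}{\glob}{k}$ onto $\categ{\BBB}{\grp,\glob}{k}$ resp.\ $\categ{\SSS}{\grp,\glob}{k}$ according to the parity of $k$. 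Distinctness of the seven listed categories is then immediate, since intersecting each with $NC^{\twocol}$ returns the corresponding noncrossing category, and those are pairwise distinct by Theorems~\ref{ThmClassiGlob} and~\ref{ThmClassiLoc}.

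The conceptual content is all in the reduction lemma; the main obstacle is the second step. There one must, family by family, (i) justify that each positioner or ``local'' generator is genuinely a consequence of $\crosspartwwww$ and the remaining generators by tracking the relevant relation in Section~\ref{SectCStar}, and (ii) in the cases of type $\SSS$ and $\BBB$, check whether after the local data disappears one can re-generate $\paarpartww\otimes\paarpartbb$, so as to decide whether the surviving category is globally or locally coloured (and, in case $\BBB$ globally coloured, to pin down the parity constraint on $k$). None of these verifications is individually hard, but together they make up the bulk of the work, and one has to be careful with the parameter ranges throughout.
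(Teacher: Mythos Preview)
The paper does not prove this theorem here; it is quoted from the companion article \cite{tarragowebercombina}. Your reduction lemma --- that any category $\CC\ni\crosspartwwww$ satisfies $\CC=\langle\CC\cap NC^{\twocol},\crosspartwwww\rangle$ --- is indeed the natural route, and the uncrossing argument via permutation partitions is correct in spirit. However, two points need repair.

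First, a minor but genuine slip: the claim ``$\idpartwb,\idpartbw\in\CC$ always holds'' is false. These partitions are rotations of $\paarpartww$ and lie in $\CC$ precisely in the orthogonal case (Section~\ref{SectOnecolored}). What you actually need is that the four crossings $\crosspartwwww$, $\crosspartbbbb$, $\crosspartwbbw$, $\crosspartbwwb$ all lie in $\langle\crosspartwwww\rangle$; this follows by rotation and verticolor reflection alone, and these four suffice to swap any two adjacent points of either row. So the reduction lemma survives, but the justification must be rewritten.

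Second, and more seriously, your collapse of $\categ{\BBB'}{\glob}{k}$ for \emph{odd} $k$ onto $\categ{\SSS}{\grp,\glob}{k}$ is wrong. The category $\categ{\BBB'}{\glob}{k}$ consists only of partitions with blocks of size at most two, and this property is preserved under tensor product, involution, rotation, \emph{and} composition (in a composition of partitions with blocks of size $\le 2$, each interface point has degree at most two, so connected components are paths or cycles and contribute at most two non-interface points). Adjoining $\crosspartwwww$ therefore cannot produce $\vierpartwbwb$, so $\langle\categ{\BBB'}{\glob}{k},\crosspartwwww\rangle$ is never of type $\SSS$. For $k=1$ this category is precisely $\Psi^{-1}(\CC_{B_n})$, the two-colored category of the bistochastic group $B_n$: it contains $\singletonw$ and $\paarpartww$, hence is globally colorized with global parameter $1$, but it is not $\categ{\BBB}{\grp,\glob}{k}$ for any even $k$ (those do not contain $\singletonw$) and not $\categ{\BBB}{\grp,\loc}{k}$ for any $k$ (those do not contain $\paarpartww$). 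You should re-examine this case carefully against the statement in \cite{tarragowebercombina}; either an additional family is present there that was not transcribed here, or there is an identification you are missing.
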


If $\CC$ is a category containing the crossing partitions $\crosspartwwww$ and $\crosspartwbbw$, then the $C^*$-algebra associated to it is commutative (see the relations in Section \ref{SectCStar}). Hence, the associated quantum groups are in fact groups. They are listed in the next theorem. Let us prepare the statement.

If $G\subset M_n(\C)$ is a group, we denote by $G\tensorglued\Z_k$ the subgroup of $G\times \Z_k=\{(g,x)\;|\; g\in G, x\in \Z_k\}$ generated by $\{(g,z)\;|\;g\in G\}$ where $z$ is the generator of $\Z_k=\Z/k\Z$. Moreover, let
\begin{itemize}
\item $O_n\subset M_n(\C)$ be the group of orthogonal matrices,
\item $\Z_k\wr S_n=(\Z_k^{\times n})\rtimes S_n$ denote the wreath product of $\Z_k$ with $S_n$, with $H_n:=\Z_2\wr S_n$ the Coxeter group of type B,
\item $B_n\subset M_n(\C)$ be the bistochastic group consisting of all orthogonal matrices summing up to one in each row and each column,
\item $C_n\subset M_n(\C)$ be the group of all unitary matrices summing up to one in each row and each column.
\end{itemize}
See also \cite[Prop. 2.4]{banica2009liberation}.

\begin{thm}\label{ThmGroup}
The groups corresponding to the categories in the group case are the following:
\begin{itemize}
\item $\categ{\OOO}{\grp,\glob}{k}:O_n\tensorglued \Z_k$, for $k\in 2\N_0$
\item $\categg{\OOO}{\grp,\loc}:U_{n}$
\item $\categ{\HHH}{\grp,\glob}{k}:(\Z_2\wr S_n)\tensorglued \Z_k=H_n\tensorglued \Z_k$, for $k\in 2\N_0$
\item $\categ{\HHH}{\grp,\loc}{k,d}:(\Z_d\wr S_n)\tensorglued \Z_k$, for $k,d\in \N_0\backslash\{1,2\}$, $k$ a multiple of $d$
\item $\categ{\SSS}{\grp,\glob}{k}:S_n\tensorglued \Z_k$, for $k\in \N_0$
\item $\categ{\BBB}{\grp,\glob}{k}:B_n\tensorglued\Z_k$, for $k\in 2\N_0$
\item $\categ{\BBB}{\grp,\loc}{k}:C_n\tensorglued \Z_k$, for $k\in \N_0$
\end{itemize}
\end{thm}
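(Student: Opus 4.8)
As already noted before the theorem, each of the seven group-case categories contains both $\crosspartwwww$ and $\crosspartwbbw$, so by the relations $R(\crosspartwwww)$ and $R(\crosspartwbbw)$ of Section \ref{SectCStar} the generators $u_{ij}$ and $u_{ij}^*$ all commute and $C(G)$ is commutative. The plan is then: (1) invoke Gelfand duality to describe the easy quantum group $G$ attached to a group-case category $\CC$ as a classical compact matrix group of unitary matrices satisfying the numerical relations $R(p)$; (2) using Remark \ref{RemRelations}, reduce to the finitely many generating partitions, which in each case are those of Theorems \ref{ThmClassiGlob} and \ref{ThmClassiLoc} with $\crosspartwwww$ adjoined; and (3) read off, relation by relation, which classical matrix group appears.

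For (1): a commutative unital $C^*$-algebra generated by $u_{ij}$ with $u,\bar u$ unitary is $C(X)$ for a compact Hausdorff $X$, and a character of it is exactly the data of a unitary matrix $g=(g_{ij})\in U_n$ at which all the relations $R(p)$, $p\in\CC$, hold as numerical identities; moreover $\Delta(u_{ij})=\sum_k u_{ik}\otimes u_{kj}$ dualizes to matrix multiplication, so $X=G=\{g\in U_n:R(p)\text{ holds at }g,\ p\in\CC\}$ is the asserted matrix group. Since $\crosspartwwww$ and $\crosspartwbbw$ impose nothing on scalars, only the noncrossing generators matter.

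For (3) I would set up a dictionary, valid for any $g\in U_n$. The relation $R(\paarpartww\otimes\paarpartbb)$, i.e. $\overline{g_{ij}}\,g_{kl}=g_{ij}\,\overline{g_{kl}}$ for all indices, forces all nonzero entries of $g$ to share a common unimodular factor, so $g=wh$ with $h\in O_n$; on top of this $R(\rot_{k/2}(\paarpartww^{\nest(k/2)}))$ reads $w^k=1$. The rotated relation of $\vierpartwbwb$, namely $g_{ki}\overline{g_{kj}}=g_{ik}\overline{g_{jk}}=0$ for $i\neq j$, says each row and column of $g$ carries at most one nonzero entry, so $g$ is monomial with unimodular nonzero entries; then $R(b_k)$ forces those entries to be $k$-th roots of unity, and $R(\rot_d(b_d\otimes\tilde b_d))$ additionally forces all of their $d$-th powers to agree, which (using $d\mid k$) amounts to $g=wg'$ with $w^k=1$ and $g'$ monomial with nonzero entries $d$-th roots of unity, i.e. $g'\in\Z_d\wr S_n$. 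Finally $R(\singletonw\otimes\singletonb)$, i.e. $\sum_l g_{il}=\sum_l g_{lj}$ for all $i,j$, says all row sums and column sums equal a common value $w$ (necessarily unimodular), i.e. $w^{-1}g\in C_n$, while $R(\singletonw^{\otimes k})$ adds $w^k=1$. Assembling these case by case yields $U_n$ for $\categg{\OOO}{\grp,\loc}$; $O_n\tensorglued\Z_k$ for $\categ{\OOO}{\grp,\glob}{k}$; $H_n\tensorglued\Z_k$ for $\categ{\HHH}{\grp,\glob}{k}$ (here $h$ is real orthogonal and monomial, hence a signed permutation, and $R(b_k)$ only re-imposes $w^k=1$ as $k$ is even); $(\Z_d\wr S_n)\tensorglued\Z_k$ for $\categ{\HHH}{\grp,\loc}{k,d}$; $S_n\tensorglued\Z_k$ for $\categ{\SSS}{\grp,\glob}{k}$ (now $h$ is a signed permutation with row sums $1$, hence a permutation); $B_n\tensorglued\Z_k$ for $\categ{\BBB}{\grp,\glob}{k}$; and $C_n\tensorglued\Z_k$ for $\categ{\BBB}{\grp,\loc}{k}$. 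In each line one checks that the described matrices are precisely the image of $G\times\Z_k$ under $(g,z^m)\mapsto e^{2\pi im/k}g$, which is the group $G\tensorglued\Z_k$ as defined before the statement. Conceptually, $C(G)$ is the abelianization of $C(G^+)$ for the corresponding free easy quantum group of Theorems \ref{ThmEasyGlob} and \ref{ThmEasyLoca}: $\freeglued$ degenerates to $\tensorglued$, $\widehat{\Z_d}\wr_*S_n^+$ to $\Z_d\wr S_n$, and the $r$-selfadjointness relations become vacuous, which explains why no locally colorized $\SSS$- or $\BBB'$-family survives.

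The routine parts are unpacking $R(\paarpartww\otimes\paarpartbb)$, $R(\vierpartwbwb)$, $R(b_k)$ and $R(\singletonw\otimes\singletonb)$ on a scalar unitary, which are immediate coefficient comparisons. The main obstacle I anticipate is the analysis of $R(b_d\otimes\tilde b_d)$ in the case $\categ{\HHH}{\grp,\loc}{k,d}$: one has to use the divisibility $d\mid k$ carefully to see that the nonzero entries, a priori arbitrary $k$-th roots of unity, are forced into a single coset $w\langle e^{2\pi i/d}\rangle$ of the group of $d$-th roots of unity, and then to match this with the abstract definition of $\tensorglued$ as a subgroup of $G\times\Z_k$. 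A secondary, purely bookkeeping, point is to handle the boundary conventions ($k=0$, where $\Z_k$ is replaced by $\Z$ and, dually, by the circle; $k=1$; $d=0$) and to cite the classification \cite[Thm.~8.3]{tarragowebercombina} to be certain the seven listed categories are all the group-case ones.
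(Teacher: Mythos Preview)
Your argument is correct, but it is not the route the paper takes. The paper's proof simply observes that $C(G)$ is commutative and then \emph{recycles} the maps $\alpha$ and $\beta$ already constructed in the proofs of Theorems~\ref{ThmEasyGlob} and~\ref{ThmEasyLoca}: one only has to check that those maps survive when the target quantum groups are replaced by their classical (commutative) versions, i.e.\ that the elements $u_{ij}'=v_{ij}z$ and the $v_{ij}'$ still satisfy the required relations, including commutativity. This gives the result in one stroke, with essentially no new computation.

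Your approach, by contrast, is the direct group-theoretic identification: pass to characters via Gelfand duality, and for each generating partition read off what the scalar relation $R(p)$ says about a unitary matrix $g\in U_n$. This is precisely the ``alternative proof'' that the paper sketches in the Remark immediately following the theorem but deliberately does not carry out (``there is certainly also a direct proof within group theory''). What you gain is a self-contained argument that does not depend on the somewhat elaborate $\alpha/\beta/\iota$ scheme of Section~\ref{SectMain}; what the paper gains is brevity, since all the work has already been done in the free case. Your anticipated difficulty with $R(b_d\otimes\tilde b_d)$ in the $\categ{\HHH}{\grp,\loc}{k,d}$ case is real but manageable: for a monomial unitary $g$ the rotated relation forces all $d$-th powers of the nonzero entries to coincide, and combined with $R(b_k)$ and $d\mid k$ this puts the entries into a single coset of the $d$-th roots of unity inside the $k$-th roots, exactly as you outline. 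One small point worth tightening: your description of $G\tensorglued\Z_k$ as the image of $(g,z^m)\mapsto e^{2\pi i m/k}g$ is the correct concrete matrix realisation, but the paper's abstract definition just before the theorem is phrased as a subgroup of $G\times\Z_k$; you should make the passage between the two explicit (via the spectrum of $C^*(v_{ij}z)\subset C(G)\otimes C^*(\Z_k)$), since as stated the paper's abstract subgroup is in fact all of $G\times\Z_k$ and only the choice of fundamental representation distinguishes it from the direct product.
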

\begin{proof}
Let $\CC$ be a category in the group case and let $G$ be the associated quantum group with $C^*$-algebra $C(G)$. Since $\crosspartwwww\in\CC$ and $\crosspartwbbw\in\CC$, the $C^*$-algebra $C(G)$ is  commutative, so $G$ is actually a group.

If $\CC=\categg{\OOO}{\grp,\loc}$, it is immediately clear that $C(G)=C(U_n)$ and hence $G=U_n$.
In the other cases, all we have to do is to read the proofs of Theorems \ref{ThmEasyGlob} and \ref{ThmEasyLoca} again and carefully check that the maps $\alpha$ and $\beta$ still exist, i.e. that the elements $u_{ij}'$ commute and likewise for the $v_{ij}'$. Note that if $\CC$ is one of the categories $\categ{\OOO}{\grp,\glob}{k}$, $\categ{\HHH}{\grp,\glob}{k}$, $\categ{\SSS}{\grp,\glob}{k}$ or  $\categ{\BBB}{\grp,\glob}{k}$, we consider the groups $H\in\{O_n,H_n,S_n,B_n\}$. We may view $C(H\tensorglued \Z_k)$ as the $C^*$-subalgebra $C^*(v_{ij}z)$ of $C(H)\otimes C^*(\Z_k)$. The cases of $\categ{\HHH}{\grp,\loc}{k,d}$ and $\categ{\BBB}{\grp,\loc}{k}$ are similar.
\end{proof}

\begin{rem}
As an alternative proof, let $\CC$ be a category in the group case and let $G\subset M_n(\C)$ be the group consisting of all matrices $u=(u_{ij})$ satisfying the relations $R(p)$ of Section 2, for $p$ being the generators of $\CC$. One can check that we have a surjective group homomorphism from $G$ onto the corresponding group of the statement in Theorem \ref{ThmGroup}. It remains to show that the homomorphism is injective. In our proof, we do it by showing that the function algebras over these groups are isomorphic, which by the general theory of $C^*$-algebras shows that the underlying topological spaces are homeomorphic. But there is certainly also a direct proof within group theory.
\end{rem}

\section{Concluding remarks}\label{SectConcluding}

\subsection{Open problems in the classification of unitary easy quantum groups}

The classification of all categories $\langle\emptyset\rangle\subset\CC\subset NC^{\twocol}$ obtained in \cite{tarragowebercombina} amounts to knowing all easy quantum groups  $S_n^+\subset G\subset U_n^+$. The classification of all categories containing the crossing partitions $\crosspartwwww$, $\crosspartwbbw$, etc. yields all easy groups $S_n\subset G\subset U_n$. Moreover, as an outcome of the classification in the orthogonal case we know all easy quantum groups $S_n\subset G\subset S_n^+$  -- there are none besides $S_n$ and $S_n^+$, see \cite{raum2013full} or \cite{banica2009liberation}. However, so far we have no result on determining the easy quantum groups $U_n\subset G\subset U_n^+$, which would be a natural next step to do in order to complete the classification of all four sides of the following square:
\begin{align*}
S_n^+ &&\subset && U_n^+\\
\subsetup && &&\subsetup\\
S_n &&\subset && U_n
\end{align*}
In the long run,  the diagonal of that square needs to be classified, too, hence all  easy quantum groups $S_n\subset G\subset U_n^+$ need to be found.
Like in the orthogonal case, one could start with  the half-liberated case, thus with categories containing some half-liberated partitions $\halflibpartwwwwww$ with various colorings, not containing the crossing partitions $\crosspartwwww$ etc. This should immediately give at least one example of a quantum group $U_n\subset G\subset U_n^+$, i.e. some definition of $U_n^*$ in analogy to $O_n^*$.

In the orthogonal case, the classification has been completed \cite{raum2013full} showing that the class of easy quantum groups roughly falls into three parts: the non-hyperoctahedral categories (finitely many, exactly 13), the hyperoctahedral ones not containing $\primarypart$ (one discrete series), and the hyperoctahedral ones containing $\primarypart$ (semidirect products, huge class). 
 In the unitary case, hyperoctahedral categories should be those containing $\vierpartwbwb$ but not $\singletonw\otimes\singletonb$. For the moment, we did not try to divide the unitary easy quantum groups into these three classes and to classify them along the lines of the orthogonal case. It is recommended to begin with the non-hyperoctahedral case, since this seems to be the simplest one.

\subsection{Using more colors}

Freslon developed in  \cite{freslon2014partition} ways of assigning quantum groups to categories of partitions involving $n$ colors and $n$ inverse colors (thus, our case would be $n=1$). A classification of such categories is completely open. Even in the noncrossing case, nothing is known, and it would be interesting to see if the step from $n=1$ to $n>1$ is substantial or not.

\subsection{Iterating free and tensor complexification}

From Theorems \ref{ThmClassiGlob} and \ref{ThmClassiLoc}, we deduce that all free easy quantum groups may be constructed from certain base quantum groups and the product constructions from Sections \ref{SectProd1}, \ref{SectProd2} and \ref{SectProd3}. Concerning these base quantum groups, recall that we have the following isomorphisms:
\[H_n^+\cong \Z_2\wr_* S_n^+,\quad 
B_n^{\#+}\cong O_{n-1}^+*\widehat{\Z_2},\quad
B_n^+\cong O_{n-1}^+,\quad
C_n^+\cong U_{n-1}^+\]
The first isomorphism can be found in Section \ref{SectRefl} while the second and the third are proven in \cite[Thm. 4.1]{raum} (note that Raum uses the notation $B_n'^+$ for the quantum group $B_n^{\#+}$ since at the time it was unknown that there are in fact \emph{two} quantum versions of the group $B_n'$, see \cite[Rem. 2.4 and Sect. 5]{weber2013classification} for a discussion). The proof of $C_n^+\cong U_{n-1}^+$ is verbatim the same as for $B_n^+\cong O_{n-1}^+$. The latter three isomorphisms are of the kind as in Definition \ref{DefW}(b).

We infer that all free easy quantum groups may be obtained from $S_n^+$, $O_n^+$, $\widehat{\Z_k}\wr_* S_n^+$ and the ($r$-selfadjoint) free and tensor complexifications together with isomorphisms of the type $B_n^+\cong O_{n-1}^+$. What about the converse?

\begin{quest}\label{question}
Iterating these constructions -- do we always end up with a (free) easy quantum group? 
\end{quest}

Both kind of answers would be very interesting, an affirmative one and a negative one. In the first case, we would have an alternative description of the class of free easy quantum groups in an inductive way. In the second case, we would have a machine to produce non-easy quantum groups which is not yet available for the moment (recall that by Lemma \ref{LemNonEasyMaybe}, we always have $S_n\subset G\tensorglued\widehat{\Z_d}\subset U_n^+$ and $S_n\subset G\freeglued\widehat{\Z_d}\subset U_n^+$ for $S_n\subset G\subset U_n^+$).
This question certainly needs further investigation in the future. However, we can already shed some light on it by giving a partial result on the iteration process.

\begin{prop}
Let $G$ be a compact matrix quantum group. Let $k,l\in\N_0\backslash\{1\}$ and let $m$ be the least common multiple of $k$ and $l$ (with $m=0$ if $k=0$ or $l=0$). We then have (as an identification in the sense of Def. \ref{DefW}(a)):
\[(G\tensorglued \widehat{\Z_k})\tensorglued \widehat{\Z_l}=G\tensorglued \widehat{\Z_m}\]
Again, we use the convention $\Z_0=\Z$. 
\end{prop}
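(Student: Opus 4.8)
The plan is to run the uniform proof scheme of Theorems \ref{ThmEasyGlob} and \ref{ThmEasyLoca}, but in a simplified form: since both quantum groups are complexifications of the same $G$, the ``group parts'' are the abelian $C^*$-algebras $C^*(\Z_k)\otimes_{\max}C^*(\Z_l)\cong C^*(\Z_k\times\Z_l)$ on one side and $C^*(\Z_m)$ on the other, and these can be matched directly. No matrix amplification (the ``$d$-th root'' device of the main theorems) is needed; the only real input is a short Bézout-type computation with exponents.

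First I would fix the ambient algebras. Let $z$, $w$ be the canonical generators of $C^*(\Z_k)$, $C^*(\Z_l)$ and $\zeta$ that of $C^*(\Z_m)$. Because $\otimes_{\max}$ is associative and $C^*(\Z_l)$ is nuclear (abelian), $C(G\tensorglued\widehat{\Z_k})\otimes_{\max}C^*(\Z_l)$ embeds isometrically into $C(G)\otimes_{\max}C^*(\Z_k)\otimes_{\max}C^*(\Z_l)$, so that $(G\tensorglued\widehat{\Z_k})\tensorglued\widehat{\Z_l}$ is the $C^*$-subalgebra $C^*(u_{ij}zw\mid i,j)$ of the latter, with fundamental corepresentation $(u_{ij}zw)$, whereas $G\tensorglued\widehat{\Z_m}=C^*(u_{ij}\zeta\mid i,j)\subseteq C(G)\otimes_{\max}C^*(\Z_m)$ has fundamental corepresentation $(u_{ij}\zeta)$. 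The goal is an isomorphism carrying $(u_{ij}\zeta)$ to $(u_{ij}zw)$, i.e.\ an identification in the sense of Definition \ref{DefW}(a).

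For the forward map $\alpha$: since $k\mid m$ and $l\mid m$ we have $z^m=w^m=1$, hence $(zw)^m=1$, so the universal property of $C^*(\Z_m)$ gives a $*$-homomorphism $C^*(\Z_m)\to C^*(\Z_k)\otimes_{\max}C^*(\Z_l)$, $\zeta\mapsto zw$; tensoring with $\id_{C(G)}$ (Remark \ref{RemUniv}) and restricting to the subalgebra generated by the $u_{ij}\zeta$ yields a surjective $*$-homomorphism $\alpha\colon C(G\tensorglued\widehat{\Z_m})\to C^*(u_{ij}zw)$ with $\alpha(u_{ij}\zeta)=u_{ij}zw$. For a converse, write $g=\gcd(k,l)$, so $m=kl/g$, choose $x,y\in\Z$ with $kx+ly=g$, and set $a:=(l/g)y$, $b:=(k/g)x$; then $a+b=1$, $m\mid ak$ and $m\mid bl$. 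Hence $\zeta^a,\zeta^b$ are commuting unitaries in $C^*(\Z_m)$ with $(\zeta^a)^k=(\zeta^b)^l=1$, so the universal property of $C^*(\Z_k\times\Z_l)$ provides a $*$-homomorphism sending $z\mapsto\zeta^a$, $w\mapsto\zeta^b$; tensoring with $\id_{C(G)}$ and restricting to $C^*(u_{ij}zw)$ gives $\beta\colon C^*(u_{ij}zw)\to C(G\tensorglued\widehat{\Z_m})$ with $\beta(u_{ij}zw)=u_{ij}\zeta^{a+b}=u_{ij}\zeta$.

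Then $\beta\circ\alpha$ fixes every generator $u_{ij}\zeta$, so $\beta\circ\alpha=\id$; hence $\alpha$ is injective, and being also surjective it is a $*$-isomorphism sending $(u_{ij}\zeta)$ onto $(u_{ij}zw)$, which is the assertion. The convention $\Z_0=\Z$ is handled by the same formulas (for $k=0$ take $g=l$, $m=0$, and the order relations become vacuous; symmetrically for $l=0$). The only steps needing genuine care are the exponent arithmetic in the Bézout step and the nuclearity/associativity remark identifying $C^*(u_{ij}zw)$ with $(G\tensorglued\widehat{\Z_k})\tensorglued\widehat{\Z_l}$; everything else is a routine manipulation of universal properties, and in particular there is no obstacle of the kind (root extraction) that forced the passage to matrix algebras in Theorems \ref{ThmEasyGlob} and \ref{ThmEasyLoca}.
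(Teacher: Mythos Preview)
Your proof is correct, and it takes a genuinely different route to injectivity than the paper. The paper defines the same forward map $\alpha$ but then proves it is injective on the ambient tensor products: it argues that the powers $(z_kz_l)^t$, $0\le t<m$, are linearly independent by representing $z_k$ and $z_l$ as cyclic shift matrices on $\C^k\otimes\C^l$, so $\alpha_1:C^*(\Z_m)\to C^*(\Z_k)\otimes C^*(\Z_l)$ is injective by a dimension count, and then $\alpha=\alpha_2\otimes\alpha_1$ is injective because a tensor of injective $*$-homomorphisms (one factor nuclear) is injective. You instead construct an explicit left inverse via B\'ezout: choosing $a,b$ with $a+b=1$, $m\mid ak$, $m\mid bl$ gives a map $z\mapsto\zeta^a$, $w\mapsto\zeta^b$, and $\beta\alpha(u_{ij}\zeta)=u_{ij}\zeta$ finishes the job. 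Your approach is a bit more constructive and avoids the representation-theoretic step; the paper's approach gives the slightly stronger fact that $\alpha$ is already injective on the full tensor product $C(G)\otimes C^*(\Z_m)$, not just on the glued subalgebra. One point you handle more carefully than the paper is the identification of $(G\tensorglued\widehat{\Z_k})\tensorglued\widehat{\Z_l}$ with $C^*(u_{ij}zw)$ inside the triple tensor product, which indeed needs the nuclearity of $C^*(\Z_l)$ so that tensoring preserves the inclusion $C(G\tensorglued\widehat{\Z_k})\hookrightarrow C(G)\otimes_{\max}C^*(\Z_k)$.
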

\begin{proof}
Let $z_k$, $z_l$, $z_m$ denote the generators of $C^*(\Z_k)$, $C^*(\Z_l)$ and $C^*(\Z_m)$ respectively. Let $\alpha$ be the map:
\begin{align*}
\alpha: C(G)\otimes C^*(\Z_m)&\to (C(G)\otimes C^*(\Z_k))\otimes C^*(\Z_l)\\
u_{ij}&\mapsto u_{ij}\\
z_m&\mapsto z_kz_l
\end{align*}
This homomorphism exists by the universal property. Here, we used that $m$ is a multiple of $k$ and of $l$ respectively in order to prove $(z_kz_l)^m=1$. Now, the restriction of $\alpha$ to $C^*(u_{ij}z_m)$ is surjective onto $C^*(u_{ij}z_kz_l)$. It remains to prove injectivity of $\alpha$.

Consider first the homomorphism  $\alpha_1:C^*(\Z_m)\to C^*(\Z_k)\otimes C^*(\Z_l)$ given by $\alpha_1(z_m):=z_kz_l$. We convince ourselves that the elements  $(z_kz_l)^t$ for $0\leq t<m$ are linearly independent. This can be seen by representing $C^*(\Z_k)$ on $M_k(\C)$ by sending $z_k$ to the $k$-cyclic shift operator $s_k$ (i.e. $s_ke_n=e_{n+1}, s_ke_k=e_1$); and likewise mapping $z_l$ to $s_l$ in $M_l(\C)$. Now, applying $(s_k\otimes s_l)^t$ to $e_1\otimes e_1$ yields $m$ linearly independent vectors. Therefore, a dimension count proves that $\alpha_1$ is injective. Since also the restriction $\alpha_2$ of $\alpha$ to $C(G)$ is injective, we obtain that $\alpha=\alpha_1\otimes\alpha_2$ is injective (see for instance \cite[Prop. 3.1.12, Exer. 3.4.1]{brownozawa}).
\end{proof}

Using techniques from free probability, we may also prove (see \cite{tarragoweberweingar}):
\[(G\freeglued \widehat{\Z_k})\freeglued \widehat{\Z_l}=(G\tensorglued \widehat{\Z_k})\freeglued \widehat{\Z_l}=G\freeglued \widehat{\Z}\]
It is quite likely, that there is also a direct algebraic proof (similar to the one of the proposition above), but we do not have one at hand at the moment. In any case, an iteration of the free and the tensor complexifications should always yield:
\[G\tensorglued\widehat{\Z_k},\qquad
G\freeglued\widehat{\Z_k},\qquad\textnormal{or}\qquad
(G\freeglued\widehat{\Z_k})\tensorglued\widehat{\Z_l}\]
It remains to analyze the behavior with respect to the $r$-selfadjoint complexifications and the isomorphisms of the kind $B_n^+\cong O_{n-1}^+$ (see Def. \ref{DefW}(b)) in order to tackle Question \ref{question}. Moreover, it is straightforward to define an $r$-selfadjoint tensor complexification, too. Is it reasonable or does it boil down to the ordinary tensor complexification? We can also ask for a refinement of Proposition \ref{OnIsom}: How does $O_n^+\freeglued\widehat{\Z_k}$ look like? Is it isomorphic to $O_n^+\freeglued\widehat{\Z}$? Do we have a general statement of the form $G\freeglued\widehat{\Z_k}\cong G\freeglued\widehat{\Z}$ whenever the fundamental representation $u$ of $G$ is irreducible? How does $S_n^+ \freeglued_r \widehat{\Z_k}$ or $O_n^+ \freeglued_r \widehat{\Z_k}$ look like, are they easy? What does $(\widehat{\Z_d}\wr_* S_n^+)\freeglued\widehat{\Z_k}$ yield? Plenty of questions.

\subsection{Use of easy quantum groups for the theory of quantum groups}

It is the basic philosophy of easy quantum groups that they should form an accessible class of compact matrix quantum groups, simply because of their basic combinatorial feature. In this sense, general theorems about compact matrix quantum groups could be found by first investigating the easy quantum groups. An example for that method can be found in \cite{raum2013easy}, where \emph{any} quantum group $S_n\subset G\subset O_n^+$ with $u_{ij}^2$ being central projections can be understood as a semi-direct product. This fact has first been observed in the case of easy quantum groups, before it has been extend also to non-easy quantum groups. Another example is the description of some partial fusion rules for all easy quantum groups in a uniform way \cite{freslonweber} (which turn out to be the honest fusion rules in the case of categories of noncrossing partitions). By that, we obtain insight in some parallelism between $S_n^+$ and $O_n^+$, for instance. The hope is that more theorems of this kind follow, and that it turns out that easy quantum groups are somehow the combinatorial backbone of all compact matrix quantum groups (at least for all of the type $S_n\subset G\subset U_n^+$).

\subsection{Use of easy quantum groups for free probability}

Quantum groups seem to be the right notion of symmetries for free probability (see \cite{nicaspeicher} for an introduction to the latter subject). The most important applications are de Finetti theorems, \cite{kostler2009noncommutative} and \cite{banica2012finetti}. For $U_n^+$ there is a de Finetti theorem by Curran, see \cite{curranQRota}, but for other unitary easy quantum groups nothing is known.
Furthermore, the laws of characters appearing with easy quantum groups are well known players in free probability. In \cite{banica2011stochastic}, Banica, Curran and Speicher extended the results of Diaconis and Shahshahani \cite{diaconis1994eigenvalues} on the law of traces of powers of $O_{n}, U_{n}$ and $S_{n}$ to all orthogonal free easy quantum groups. In a forthcoming paper \cite{tarragoweberweingar}, the authors will extend these results to all free easy quantum groups. See also \cite{brannan} for other applications of easy quantum groups to free probability.

\subsection{Links with quantum isometry groups}

Quantum isometry groups are a powerful machinery in order to provide examples of quantum groups. The very nice feature is, that they immediatly come with a (noncommutative) space on which they act. Thus, noncommutative geometry and quantum groups come together in a  twinned way. Several examples of easy quantum groups can be seen as quantum isometry groups, see  \cite{wang1998quantum}, \cite{banica2010quantum}, \cite{banica2015liberation}. The question is whether \emph{all} easy quantum groups appear to be quantum isometry groups; and whether conversely all quantum isometry groups $S_n\subset G\subset U_n^+$ are easy. It would be fruitful for both sides to have these two kinds of quantum group schools intervowen and maybe to find a way of describing the natural noncommutative manifolds arising with such ``easy'' quantum isometry groups in a purely combinatorial way. 

\subsection{Links with Woronowicz's $SU_q(n)$}

In \cite{woronowicz1988tannaka}, Woronowicz considers unital universal $C^*$-algebras $A$ generated by elements $u_{ij}$, $1\leq i,j\leq n$ such that $u=(u_{ij})$ is unitary and
\[\sum_{j_1,\ldots,j_n} E(j_1,\ldots,j_n) u_{i_1j_1}\ldots u_{i_nj_n}=E(i_1,\ldots,i_n)\]
where $E(i_1,\ldots,i_n)$ are complex numbers. For $k\in\{1,\ldots,n\}$ he puts:
\[E_{k-}:=\left(E(k,i_2,\ldots,i_n)\right)_{1\leq i_2,\ldots,i_n\leq n},\qquad
E_{-k}:=\left(E(i_1,\ldots,i_{n-1},k)\right)_{1\leq i_1,\ldots,i_{n-1}\leq n}\]
He proves that $(A,u)$ is a compact matrix quantum group, if $E_{1-},\ldots, E_{n-}$ are linear independent and $E_{-1},\ldots, E_{-n}$, too. Putting
\[E(i_1,\ldots,i_n):=\begin{cases} 0&\textnormal{ if }i_k=i_l \textnormal{ for some } k,l\\
 (-q)^{I(i_1,\ldots,i_n)} &\textnormal{ otherwise}\end{cases}\]
 where $I(i_1,\ldots,i_n)$ is the minimal number of transpositions needed to turn the string $(i_1,\ldots,i_n)$ into $(1,2,\ldots,n)$, he defines his quantum group $SU_q(n)$ for $q\in (0,1]$ with $q=1$ being the classical case $SU(n)$.
 
 Let $n\geq 3$. Our quantum groups $\widehat{\Z_n}\wr_* S_n^+$ corresponding to
 \[\categ{\HHH}{\loc}{n,n}= \categ{\HHH}{\loc}{n,0}=\langle b_n,\vierpartwwbb,\vierpartwbwb\rangle=\langle b_n\rangle\]
 (apply \cite[Lemma 1.1(b)]{tarragowebercombina} to $b_n\otimes \tilde b_n$ and use \cite[Lemma 1.3(c)]{tarragowebercombina} in order to see the last of the above equalities) are somehow complementary to $SU_q(n)$. Indeed, putting
 \[E(i_1,\ldots,i_n):=\delta_{b_n}(0,i)=\begin{cases} 1&\textnormal{ if all $i_k$ coincide}\\
 0 &\textnormal{ otherwise}\end{cases}\]
 we obtain $\widehat{\Z_n}\wr_* S_n^+$ from Woronowicz's definition.
 
\subsection{Links with dual groups}
 
In \cite{voicdual}, Voiculescu introduced dual groups as unital $C^*$-algebras $A$ together with $*$-homomorphisms $\Delta:A\to A*A$, $S:A\to A$ and $\epsilon:A\to \C$ such that natural Hopf algebraic like relations between these maps are fulfilled. These objects also appear under different names (such as co-groups) in other contexts. The crucial difference to quantum groups is that their comultiplication $\Delta$ goes into the free product of the $C^*$-algebras rather than into their tensor product. In this sense, dual groups are even more ``quantum'' (in the sense of noncommutativity) than quantum groups, but the prize is that they do not generalize the notion of groups. Indeed, the multiplication $\mu:G\times G\to G$ of a group translates into $\Delta:C(G)\to C(G\times G)$ on the algebraic level, but we have $C(G\times G)\cong C(G)\otimes C(G)$ rather than $C(G\times G)\cong C(G)*C(G)$.

However, dual groups have been studied as reasonable noncommutative analogues of groups in various settings, see for instance \cite{cebronulrich} for an overview on the literature. The main example of a dual group is Brown's algebra \cite{brown}  denoted by $U\langle n\rangle$ or $U_n^{nc}$. In the language of $C^*$-algebras it is given by the unital universal $C^*$-algebra $C(U\langle n\rangle)$ generated by elements $u_{ij}$, $1\leq i,j\leq n$ such that $u=(u_{ij})$ is unitary, together with the maps:
\[\Delta(u_{ij})=\sum_k u_{ik}^{(1)}u_{kj}^{(2)},\qquad S(u_{ij})=u_{ji}^*, \qquad \epsilon(u_{ij})=\delta_{ij}\]
Here, $u_{ik}^{(1)}$ lies in the first copy of $A$ in $A*A$ while $u_{kj}^{(2)}$ is in the second. Note that we do not require that $\bar u=(u_{ij}^*)$ is unitary and in fact this relation does not pass the comultiplication $\Delta$. Thus, $U\langle n\rangle$ is not a compact matrix quantum group, while $U_n^+$, $O_n^+$ and $S_n^+$ are no dual groups.

The pool of examples of quantum subgroups of $U_n^+$ is huge -- amongst others thanks to the easy quantum group machine. Finding dual subgroups $G$ of $U\langle n\rangle$ (in the sense that there is a $*$-homomorphism from $C(U\langle n\rangle)$ to $C(G)$ mapping generators to generators) in turn is a much more difficult task -- there are not so many, and in particular, we have no ``easy'' approach, using intertwiners and partitions. However, from this article we obtain two new examples of dual groups $B\langle n\rangle\subset B'\langle n\rangle\subset U\langle n\rangle$:
\begin{align*}
C(B\langle n\rangle):=&C^*(u_{ij}, 1\leq i,j\leq n\;|\; u\textnormal{ is unitary and the relations } R(\singletonw)\textnormal{ are fulfilled})\\
C(B'\langle n\rangle):=&C^*(u_{ij}, 1\leq i,j\leq n\;|\; u\textnormal{ is unitary and the relations } R(\singletonw\otimes\singletonb)\textnormal{ are fulfilled})
\end{align*}
The maps $\Delta$, $S$ and $\epsilon$ are as in the case of $U\langle n\rangle$. All other relations $R(p)$ of Section \ref{SectCStar} do not pass the comultiplication.

\bibliographystyle{alpha}
\nocite{*}
\bibliography{bibliographieOpAlg}

\begin{thebibliography}{BBCC11}

\bibitem[Ban97]{banicaUn}
Teodor Banica.
\newblock Le groupe quantique compact libre {${\rm U}(n)$}.
\newblock {\em Comm. Math. Phys.}, 190(1):143--172, 1997.

\bibitem[Ban99]{banica1999representations}
Teodor Banica.
\newblock Representations of compact quantum groups and subfactors.
\newblock {\em J. Reine Angew. Math. (Crelle's Journal)}, 509:167--198, 1999.

\bibitem[Ban08]{banica2008note}
Teodor Banica.
\newblock A note on free quantum groups.
\newblock {\em Ann. Math. Blaise Pascal}, 15(2):135--146, 2008.

\bibitem[Ban15]{banica2015liberation}
Teodor Banica.
\newblock Liberation theory for noncommutative homogeneous spaces.
\newblock {\em arXiv:1505.07755}, 2015.

\bibitem[BBC07]{banica2007hyperoctahedral}
Teodor Banica, Julien Bichon, and Beno{\^{\i}}t Collins.
\newblock The hyperoctahedral quantum group.
\newblock {\em J. Ramanujan Math. Soc.}, 22(4):345--384, 2007.

\bibitem[BBCC11]{banica2011free}
Teodor Banica, Serban Belinschi, Mireille Capitaine, and Benoit Collins.
\newblock Free {B}essel laws.
\newblock {\em Canad. J. Math}, 63(1):3--37, 2011.

\bibitem[BC07]{banicacollinsintcqg}
Teodor Banica and Beno{\^{\i}}t Collins.
\newblock Integration over compact quantum groups.
\newblock {\em Publ. Res. Inst. Math. Sci.}, 43(2):277--302, 2007.

\bibitem[BCS11]{banica2011stochastic}
Teodor Banica, Stephen Curran, and Roland Speicher.
\newblock Stochastic aspects of easy quantum groups.
\newblock {\em Probab. Theory Related Fields}, 149(3-4):435--462, 2011.

\bibitem[BCS12a]{speicherunpublished}
Teodor Banica, Stephen Curran, and Roland Speicher.
\newblock Classification results and laws of characters for unitary easy
  quantum groups.
\newblock {\em unpublished draft}, 2012.

\bibitem[BCS12b]{banica2012finetti}
Teodor Banica, Stephen Curran, and Roland Speicher.
\newblock De {F}inetti theorems for easy quantum groups.
\newblock {\em Ann. Probab.}, 40(1):401--435, 2012.

\bibitem[BCV14]{brannan}
Michael Brannan, Benoit Collins, and Roland Vergnioux.
\newblock The {C}onnes embedding property for quantum group von {N}eumann
  algebras.
\newblock {\em arXiv:1412.7788}, 2014.

\bibitem[BG10]{banica2010quantum}
Teodor Banica and Debashish Goswami.
\newblock Quantum isometries and noncommutative spheres.
\newblock {\em Comm. Math. Phys.}, 298(2):343--356, 2010.

\bibitem[Bic04]{bichon2004free}
Julien Bichon.
\newblock Free wreath product by the quantum permutation group.
\newblock {\em Algebr. Represent. Theory}, 7(4):343--362, 2004.

\bibitem[BO08]{brownozawa}
Nathanial Brown and Narutaka Ozawa.
\newblock {\em {$C^*$}-algebras and finite-dimensional approximations},
  volume~88 of {\em Graduate Studies in Mathematics}.
\newblock American Mathematical Society, Providence, RI, 2008.

\bibitem[Bro81]{brown}
Lawrence Brown.
\newblock Ext of certain free product {$C^{\ast} $}-algebras.
\newblock {\em J. Operator Theory}, 6(1):135--141, 1981.

\bibitem[BRRZ08]{bergeron2008invariants}
Nantel Bergeron, Christophe Reutenauer, Mercedes Rosas, and Mike Zabrocki.
\newblock Invariants and coinvariants of the symmetric groups in noncommuting
  variables.
\newblock {\em Canad. J. Math}, 60(2):266--296, 2008.

\bibitem[BS09]{banica2009liberation}
Teodor Banica and Roland Speicher.
\newblock Liberation of orthogonal {L}ie groups.
\newblock {\em Adv. Math.}, 222(4):1461--1501, 2009.

\bibitem[BV09]{banica2009fusion}
Teodor Banica and Roland Vergnioux.
\newblock Fusion rules for quantum reflection groups.
\newblock {\em J. Noncommut. Geom.}, 3(3):327--359, 2009.

\bibitem[CU15]{cebronulrich}
Guillaume C\'ebron and Micha\"el Ulrich.
\newblock Haar states and {L}\'evy processes on the unitary dual group.
\newblock {\em arXiv:1505.08083}, 2015.

\bibitem[Cur10]{curranQRota}
Stephen Curran.
\newblock Quantum rotatability.
\newblock {\em Trans. Amer. Math. Soc.}, 362(9):4831--4851, 2010.

\bibitem[DS94]{diaconis1994eigenvalues}
Persi Diaconis and Mehrdad Shahshahani.
\newblock On the eigenvalues of random matrices.
\newblock {\em J. Appl. Probab.}, 31A:49--62, 1994.

\bibitem[Fre14]{freslon2014partition}
Amaury Freslon.
\newblock On the partition approach to {S}chur-{W}eyl duality and free quantum
  groups.
\newblock {\em arXiv:1409.1346}, 2014.

\bibitem[FW14]{freslonweber}
Amaury Freslon and Moritz Weber.
\newblock On the representation theory of partition (easy) quantum groups.
\newblock {\em J. Reine Angew. Math. (Crelle's Journal) (ahead of print)},
  2014.

\bibitem[KS09]{kostler2009noncommutative}
Claus K{\"o}stler and Roland Speicher.
\newblock A noncommutative de {F}inetti theorem: invariance under quantum
  permutations is equivalent to freeness with amalgamation.
\newblock {\em Comm. Math. Phys.}, 291(2):473--490, 2009.

\bibitem[KT99]{kustermans1999survey}
Johan Kustermans and Lars Tuset.
\newblock A survey of {$C^{*}$}-algebraic quantum groups. {I}.
\newblock {\em Irish Math. Soc. Bull}, 43:8--63, 1999.

\bibitem[Lem14]{lemeuxfusion}
Fran{\c{c}}ois Lemeux.
\newblock The fusion rules of some free wreath product quantum groups and
  applications.
\newblock {\em J. Funct. Anal.}, 267(7):2507--2550, 2014.

\bibitem[MVD98]{maes1998notes}
Ann Maes and Alfons Van~Daele.
\newblock Notes on compact quantum groups.
\newblock {\em Nieuw Arch. Wisk. (4)}, 16(1-2):73--112, 1998.

\bibitem[NS06]{nicaspeicher}
Alexandru Nica and Roland Speicher.
\newblock {\em Lectures on the combinatorics of free probability}, volume 335
  of {\em London Mathematical Society Lecture Note Series}.
\newblock Cambridge University Press, 2006.

\bibitem[NT13]{neshveyev2013compact}
Sergey Neshveyev and Lars Tuset.
\newblock {\em Compact quantum groups and their representation categories}.
\newblock Soci{\'e}t{\'e} Math{\'e}matique de France, 2013.

\bibitem[Rau12]{raum}
Sven Raum.
\newblock Isomorphisms and fusion rules of orthogonal free quantum groups and
  their free complexifications.
\newblock {\em Proc. Amer. Math. Soc.}, 140(9):3207--3218, 2012.

\bibitem[RW15a]{raum2013easy}
Sven Raum and Moritz Weber.
\newblock Easy quantum groups and quantum subgroups of a semi-direct product
  quantum group.
\newblock {\em to appear in J. Noncomm. Geom.}, 2015.

\bibitem[RW15b]{raum2013full}
Sven Raum and Moritz Weber.
\newblock The full classification of orthogonal easy quantum groups.
\newblock {\em to appear in Comm. Math. Phys.}, 2015.

\bibitem[Tim08]{timmermann2008invitation}
Thomas Timmermann.
\newblock {\em An invitation to quantum groups and duality: From Hopf algebras
  to multiplicative unitaries and beyond}.
\newblock European Mathematical Society, 2008.

\bibitem[TW15]{tarragowebercombina}
Pierre Tarrago and Moritz Weber.
\newblock The classification of tensor categories of two-colored noncrossing
  partitions.
\newblock {\em arXiv:1509.00988}, 2015.

\bibitem[TW16]{tarragoweberweingar}
Pierre Tarrago and Moritz Weber.
\newblock Weingarten calculus for free easy quantum groups.
\newblock {\em in preparation}, 2016.

\bibitem[Voi87]{voicdual}
Dan Voiculescu.
\newblock Dual algebraic structures on operator algebras related to free
  products.
\newblock {\em J. Operator Theory}, 17(1):85--98, 1987.

\bibitem[Wah14]{wahl}
Jonas Wahl.
\newblock A note on reduced and von {N}eumann algebraic free wreath products.
\newblock {\em arXiv:1411.4861}, 2014.

\bibitem[Wan95a]{wang1995free}
Shuzhou Wang.
\newblock Free products of compact quantum groups.
\newblock {\em Comm. Math. Phys.}, 167(3):671--692, 1995.

\bibitem[Wan95b]{wang1995tensor}
Shuzhou Wang.
\newblock Tensor products and crossed products of compact quantum groups.
\newblock {\em Proc. London Math. Soc. (3)}, 71(3):695--720, 1995.

\bibitem[Wan98]{wang1998quantum}
Shuzhou Wang.
\newblock Quantum symmetry groups of finite spaces.
\newblock {\em Comm. Math. Phys.}, 195(1):195--211, 1998.

\bibitem[Web13]{weber2013classification}
Moritz Weber.
\newblock On the classification of easy quantum groups.
\newblock {\em Adv. Math.}, 245:500--533, 2013.

\bibitem[Wor87]{woronowicz1987compact}
Stanis{\l}aw Woronowicz.
\newblock Compact matrix pseudogroups.
\newblock {\em Comm. Math. Phys.}, 111(4):613--665, 1987.

\bibitem[Wor88]{woronowicz1988tannaka}
Stanis{\l}aw Woronowicz.
\newblock Tannaka-{K}rein duality for compact matrix pseudogroups. {T}wisted
  {$SU(N)$} groups.
\newblock {\em Invent. Math.}, 93(1):35--76, 1988.

\bibitem[Wor91]{woronowicz1991remark}
Stanis{\l}aw Woronowicz.
\newblock A remark on compact matrix quantum groups.
\newblock {\em Lett. Math. Phys.}, 21(1):35--39, 1991.

\bibitem[Wor98]{Wor98}
Stanis{\l}aw Woronowicz.
\newblock Compact quantum groups.
\newblock In {\em Sym\'etries quantiques ({L}es {H}ouches, 1995)}, pages
  845--884. North-Holland, Amsterdam, 1998.

\end{thebibliography}

\end{document}